% Template for the submission to:
%   The Annals of Probability           [aop]
%   The Annals of Applied Probability   [aap]
%   The Annals of Statistics            [aos] 
%   The Annals of Applied Statistics    [aoas]
%   Stochastic Systems                  [ssy]
%
%Author: In this template, the places where you need to add information
%        (or delete line) are indicated by {???}.  Mostly the information
%        required is obvious, but some explanations are given in lines starting
%Author:
%All other lines should be ignored.  After editing, there should be
%no instances of ??? after this line.

% use option [preprint] to remove info line at bottom
% journal options: aop,aap,aos,aoas,ssy
% natbib option: authoryear
\documentclass[preprint]{imsart}
\makeatletter
\providecommand{\@LN}[2]{}
\makeatother

\usepackage{amsthm,amsmath}
\usepackage[numbers]{natbib}
\RequirePackage[colorlinks,citecolor=blue,urlcolor=blue]{hyperref}
\usepackage{geometry}                % See geometry.pdf to learn the layout options. There are lots.
\geometry{a4paper}                   % ... or a4paper or a5paper or ... 
%\geometry{landscape}                % Activate for for rotated page geometry
%\usepackage[parfill]{parskip}    % Activate to begin paragraphs with an empty line rather than an indent

% -*- root: opr2018.tex -*-
\usepackage{graphicx}
\usepackage{amssymb}
\usepackage{amsmath}
\usepackage{natbib}
\usepackage{amsthm}
\usepackage{aliascnt}
\usepackage{amsfonts}
\usepackage{epstopdf}
\usepackage[linesnumbered,%ruled,
vlined]{algorithm2e}
\usepackage{xargs}
\usepackage{bbm}
\usepackage{mathtools}
\usepackage{stmaryrd}
\usepackage{ifthen}
\usepackage{float}
\usepackage{upgreek}
\usepackage{mathrsfs}
\usepackage[textwidth=4cm, textsize=footnotesize]{todonotes}
\usepackage{cleveref}
% specific for this file
\usepackage[utf8]{inputenc} % Change according your file encoding
\usepackage{url}
\usepackage{tikz}
\usepackage{verbatim}
\usepackage{caption}
\usepackage{subcaption}
\usepackage{lmodern}
\usepackage{enumitem}

\usepackage{siunitx}
\usepackage{mathtools}
\def\namedlabel#1#2{\begingroup
    #2%
    \def\@currentlabel{#2}%
    \phantomsection\label{#1}\endgroup
}

\captionsetup[subfigure]{labelformat=simple,labelsep = colon}
% Tikz
\usetikzlibrary{decorations.pathmorphing} % noisy shapes
\usetikzlibrary{fit}                    % fitting shapes to coordinates
\usetikzlibrary{backgrounds}    % drawing the background after the foreground
%%% Fix reference to Tikz figure
%

%\usepackage{graphicx}
%\usepackage{amssymb}
%\usepackage{epstopdf}
%\usepackage{xargs}
%\usepackage{ifthen}
%\usepackage{bbm}
%\usepackage{aliascnt}
%\usepackage{stmaryrd}
%\usepackage{mathtools}
%\mathtoolsset{showonlyrefs}
%
%\usepackage{todonotes}
%\usepackage{enumerate}
%\usepackage{ushort}
%\usepackage{algpseudocode}
%\usepackage{algorithm}
%\usepackage{tikz}
%\usetikzlibrary{positioning}
%%\usepackage{autonum}
%
%\errorcontextlines 10000

\numberwithin{equation}{section}
% provide arXiv number if available:
%\arxiv{arXiv:0000.0000}

% put your definitions there:
\startlocaldefs

% -*- root: opr2018.tex -*-
% Commands IN ALPHABETIC ORDER 

%\everymath{\displaystyle}

\newcommand{\1}{\mathbbm{1}}

\newcommandx{\bkpath}[2][1=]{
    \ifthenelse{\equal{#1}{}}
        {\bar{\kernel{R}}_{}}
        {\bar{\kernel{R}}_{} \langle #1 \rangle}
}
\newcommandx{\bk}[2][1=]{
    \ifthenelse{\equal{#1}{}}
        {\kernel{R}_{#2}}%#2
        {\kernel{R}_{#2} \langle #1 \rangle}
    }
\newcommand{\bfft}{breath-first tree traversal}
\newcommand{\bfftabbr}{breath-first tree traversal}

\newcommandx{\comb}[2][1=]{
\ifthenelse{\equal{#1}{}}
{u_{#2}}
{u_{#1 ; #2}}
}

\newcommand{\disc}{\mathsf{pr}}

\newcommandx{\df}[1][1=]{\ifthenelse{\equal{#1}{}}{\delta}{\delta_{#1}}}

\newcommand{\eqdef}{\vcentcolon=}
\newcommand{\felix}[1]{}

\newcommandx{\gennodea}[1][1=]
{\ifthenelse{\equal{#1}{}}
{y}
{y_{#1}}
}
\newcommand{\gennodeb}{{y'}}
\newcommand{\genseta}{a}
\newcommand{\gensetb}{b}

\newcommandx{\graph}[1][1=]{\ifthenelse{\equal{#1}{}}{G}{G_{#1}}}
\newcommandx{\graphedgeset}[1][1=]{\ifthenelse{\equal{#1}{}}{E}{E_{\mathsmaller{#1}}}}
\newcommandx{\graphfd}[1][1=]{\ifthenelse{\equal{#1}{}}{\mathcal{G}}{\mathcal{G}_{#1}}}
\newcommandx{\graphnodeset}[1][1=]{\ifthenelse{\equal{#1}{}}{V}{V_{\mathsmaller{#1}}}}

\newcommandx{\graphsp}[1][1=]{\ifthenelse{\equal{#1}{}}{\mathsf{G}}{\mathsf{G}_{#1}}}
\newcommandx{\graphtarg}[1][1=]{
    \ifthenelse{\equal{#1}{}}
        {\graphtargletter^\star}
        {\graphtargletter^\star \langle #1 \rangle}
}
\newcommand{\graphtargletter}{\eta}

\newcommandx{\targMCMC}[3][1=]{
\ifthenelse{\equal{#1}{}}
{\eta_{#2}^{\N, #3}}
{\eta^* \langle #2 \rangle^{\N, #3}}
}

\newcommand{\hyperparamletter}{\vartheta}
\newcommandx{\hyperparam}[2][1=]{\ifthenelse{\equal{#1}{}}{\hyperparamletter_{#2}}{\hyperparamletter'_{#2}(#1)}}

\newcommand{\jtcount}[1]{\mu(#1)}

\newcommand{\jtind}{m}
\newcommandx{\jtnode}[1][1=]{\ifthenelse{\equal{#1}{}}{c}{c_{#1}}}
\newcommand{\jtsubtreeorder}{{|\graphnodeset'|}}

\newcommand{\jtnewnode}[1]{\jtnodearbrest{#1}^\mathsmaller{+}}
\newcommand{\jtnodeind}{j}
\newcommand{\jtnodeindb}{k}
\newcommand{\jtnodeseps}[1]{z_{#1}}
\newcommand{\jtnodearb}[1]{q_{#1}}
\newcommand{\jtnodearbrest}[1]{d_{#1}}

\newcommand{\jtnodestomove}[1]{n_{#1}}

\newcommand{\jtnoderest}[1]{r_{#1}}

\newcommand{\jtpotorigins}[1]{M_{#1}} % Potential original nodes of a node containing p+1
\newcommand{\jtsep}[2]{\sep_{#1,#2}}
\newcommand{\jtsepcount}{\nu}

\newcommand{\jtsepforest}[1]{{\mathsf F}_{#1}}
\newcommand{\jte}{junction-tree expander}
\newcommand{\jteabbr}{JTE}
\newcommand{\jtc}{junction-tree collapser}
\newcommand{\jtcabbr}{JTC}

\newcommand{\kernel}[1]{\mathbf{#1}}
\newcommandx{\hiwlocparam}[1][1=]{\ifthenelse{\equal{#1}{}}{\phi}{\phi_{#1}}}

\newcommand{\mcmc}{Markov chain Monte Carlo}
\newcommand{\mcmcabbr}{MCMC}

\newcommand{\N}{N}

\newcommand{\newseps}[1]{A(#1)}

\newcommandx{\niwnloc}[1][1=]{\ifthenelse{\equal{#1}{}}{\alpha}{\alpha_{#1}}}
\newcommandx{\niwnscalefactor}[1][1=]{\ifthenelse{\equal{#1}{}}{\beta}{\beta[#1]}}
\newcommandx{\niwiwloc}[1][1=]{\ifthenelse{\equal{#1}{}}{\Phi}{\Phi[#1]}}
\newcommandx{\niwiwdf}[1][1=]{\ifthenelse{\equal{#1}{}}{\delta}{\delta[#1]}}
\newcommandx{\niwnlocpred}[1][1=]{\ifthenelse{\equal{#1}{}}{\alpha}{\alpha_{#1}}}
\newcommandx{\niwnscalefactorpred}[1][1=]{\ifthenelse{\equal{#1}{}}{\beta}{\beta[#1]}}
\newcommandx{\niwiwlocpred}[1][1=]{\ifthenelse{\equal{#1}{}}{\Phi}{\Phi[#1]}}
\newcommandx{\niwiwdfpred}[1][1=]{\ifthenelse{\equal{#1}{}}{\delta}{\delta[#1]}}
\newcommand{\niwnmu}{{\boldsymbol \mu}}

\newcommand{\niwiwtau}{{\boldsymbol \Phi}}

\newcommandx{\niwnmustsymb}[1][1=]{\ifthenelse{\equal{#1}{}}{\niwnmu^\ast}{\niwnmu_{#1}^\ast}}
\newcommandx{\niwiwtaustsymb}[1][1=]{\ifthenelse{\equal{#1}{}}{\niwiwtau^\ast}{\niwiwtau_{#1}^\ast}}

\newcommandx{\nn}[1][1=]{
\ifthenelse{\equal{#1}{}}
{{\mathbb N}}
{{\mathbb N}_{#1}}
}
\newcommandx{\neig}[2][1=]{
\ifthenelse{\equal{#1}{}}
{\mathfrak{N}(#2)}
{\mathfrak{N}_{#1}(#2)}
}
\newcommand{\newvert}{\gennodea_{\jtind+1}}

\newcommand{\nset}{\mathbb{N}}
\newcommand{\nsetpos}{\mathbb{N}^*}

\newcommand{\p}{p}
\newcommand{\pathorder}{\ell}
\newcommand{\pathordera}{{\ell_1}}
\newcommand{\pathorderb}{{\ell_2}}
\newcommand{\pathorderc}{{\ell_3}}
\newcommandx{\partarg}[2][1=]{
    \ifthenelse{\equal{#1}{}}
    {\eta^{\N}_{#2}}
    {\eta_{#2}^{\ast, \N}}
}
\newcommandx{\partinit}[1][1=]{
       \ifthenelse{\equal{#1}{}}
       {\kappa}
       {\kappa^\ast \langle #1 \rangle}
}

\newcommand{\potset}{\boldsymbol{\wp}}
\newcommand{\prob}{{\mathbb P}}

\newcommandx{\prop}[2][1=]{
    \ifthenelse{\equal{#1}{}}
        {\kernel{\propletter}_{#2}}
        {\kernel{\propletter}_{#2} \langle #1 \rangle}
    }
\newcommandx{\proppath}[2][1=]{
\ifthenelse{\equal{#1}{}}
{\bar{\kernel{\propletter}}_{#2}}
{\bar{\kernel{\propletter}}_{#2} \langle #1 \rangle}
}
\newcommand{\propmarg}[1]{\kernel{\propletter}^{#1}}
\newcommand{\propletter}{K}

\newcommandx{\scalmat}[1][1=]{\ifthenelse{\equal{#1}{}}{\Phi}{\Phi_{#1}}}
\renewcommand{\sep}{s}
\newcommand{\seps}[1]{S{(#1)}}
\newcommand{\sepset}{S}
\newcommand{\smcabbr}{SMC}

\newcommand{\subseqind}{r}
\newcommand{\subtreecont}{\alpha}
\newcommandx{\subtreeker}[2][1=]{
\ifthenelse{\equal{#1}{}}
{\kernel{S}^{#2}}
{\kernel{S}^{#2} \langle #1 \rangle}
}

\newcommand{\subtreenonempty}{\beta}
\newcommand{\supp}{\operatorname{Supp}}
\newcommandx{\targ}[3][1=, 3=]{
   \ifthenelse{\equal{#3}{}}
   {
       \ifthenelse{\equal{#1}{}}
       {\eta_{#2}}
       {\eta^\ast \langle #2 \rangle}
   }
   {
       \ifthenelse{\equal{#1}{}}
       {\eta_{#2} \langle #3 \rangle}
       {\eta_{#2}^\ast \langle #3 \rangle}
   }
}
\newcommand{\targpath}[1]{\bar{\eta}_{#1}}

\newcommandx{\tree}[1][1=]{\ifthenelse{\equal{#1}{}}{T}{T_{\mathsmaller{#1}}}}
%\newcommandx{\tree}[1][1=]{\ifthenelse{\equal{#1}{}}{T}{T_{\mathsmaller{#1}}}}
\newcommand{\treepart}[2]{\tau_{#1}^{#2}}
\newcommand{\treeparttd}[2]{\tilde{\tau}_{#1}^{#2}}

\newcommandx{\treerv}[1][1=]{\ifthenelse{\equal{#1}{}}{\mathcal T}{\mathcal T_{#1}}}
\newcommandx{\tr}[2][2=]{
\ifthenelse{\equal{#2}{}}
{\tilde{x}_{#1}}
{\tilde{x}_{#1}^{#2}}
}

\newcommandx{\trex}[2][2=]{
\ifthenelse{\equal{#2}{}}
{x_{#1}}
{x_{#1}^{#2}}
}

\newcommand{\trgr}{g}

\newcommandx{\trsp}[1][1=]{\ifthenelse{\equal{#1}{}}{\mathsf{T}}{\mathsf{T}_{#1}}}
\newcommandx{\trtarg}[1][1=]{\ifthenelse{\equal{#1}{}}{\eta^\ast}{\eta^\ast \langle #1 \rangle}}

\newcommandx{\untarg}[3][1=, 3=]{
   \ifthenelse{\equal{#3}{}}
   {
       \ifthenelse{\equal{#1}{}}
       {\gamma_{#2}}
       {\gamma^\ast \langle #2 \rangle}
   }
   {
       \ifthenelse{\equal{#1}{}}
       {\gamma_{#2} \langle #3 \rangle}
       {\gamma_{#2}^\ast \langle #3 \rangle}
   }
}
\newcommandx{\untrtarg}[1][1=]{
    \ifthenelse{\equal{#1}{}}
        {\gamma^\ast}
        {\gamma^\ast \langle #1 \rangle}
}

\newcommandx{\uk}[2][1=]{
\ifthenelse{\equal{#1}{}}
{\kernel{Q}_{#2}}
{\kernel{Q}_{#2} \langle #1 \rangle}
}

\newcommandx{\unif}[1][1=]{
\ifthenelse{\equal{#1}{}}
{\mathsf{Unif}}
{\mathsf{Unif}\langle #1 \rangle}
}

\newcommandx{\ungraphtarg}[1][1=]{
    \ifthenelse{\equal{#1}{}}
        {\gamma^\star}
        {\gamma^\star \langle #1 \rangle}
}

\newcommand{\wgt}[2]{\omega_{#1}^{#2}}
\newcommandx{\wgtfunc}[2][1=]{
\ifthenelse{\equal{#1}{}}
{w_{#2}}
{w_{#2} \langle #1 \rangle}
}
\newcommand{\wgtsum}[1]{\Omega_{#1}^\N}

\newcommandx{\X}[2][2=]{
\ifthenelse{\equal{#2}{}}
{X_{#1}}
{X_{#1}^{#2}}
}
\newcommandx{\x}[2][2=]{
\ifthenelse{\equal{#2}{}}
{x_{#1}}
{x_{#1}^{#2}}
}
\newcommand{\Xset}{\mathsf{X}}

\newcommandx{\y}[1][1=]{\ifthenelse{\equal{#1}{}}{y}{y_{#1}}}
\newcommandx{\Y}[1][1=]{\ifthenelse{\equal{#1}{}}{Y}{Y_{#1}}}

\newcommandx{\Z}[2][2=]{
\ifthenelse{\equal{#2}{}}
{T_{#1}}
{T_{#1}^{#2}}
}

% Densities
\newtheorem{mydef}{Definition}%[numberby]
\newtheorem{mythm}{Theorem}
\newtheorem*{jtexpalg}{Junction tree expander}
\newtheorem*{jtcolalg}{Junction tree collapser}

\newtheorem{myex}{Example}
\newtheorem{mylem}{Lemma}
\newtheorem{myalg}{Algorithm}
\newtheorem{myprop}{Proposition}

\newaliascnt{definition}{theorem}

\aliascntresetthe{definition}

\newaliascnt{lemma}{theorem}

\aliascntresetthe{lemma}
\crefname{lemma}{lemma}{lemmas}
\Crefname{Lemma}{Lemma}{Lemmas}

\endlocaldefs
%\pdfminorversion=4
\begin{document}

\begin{frontmatter}

% "Title of the paper"

\title{Sequential sampling of junction trees for decomposable graphs}
%\title{Sequential sampling of junction trees for decomposable graphs with the Christmas tree algorithm}
%\title{Sequential sampling of junction trees for decomposable graphs: junction tree expanders}
%\title{Sequential sampling of junction trees with the Christmas tree algorithm}

\begin{aug}
\author{\fnms{Jimmy} \snm{Olsson}%\thanksref{t1}%\thanksref{t1,t2,m1}
\ead[label=e1]{jimmyol@kth.se}},
\author{\fnms{Tetyana} \snm{Pavlenko}%\thanksref{t3,m1,m2}
\ead[label=e2]{pavlenko@kth.se}}
\and
\author{\fnms{Felix~L.} \snm{Rios}%\thanksref{t3,m1,m2}
\ead[label=e3]{flrios@kth.se}}

%\thankstext{t1}{This work is supported by the Swedish Research Council, Grant 2011-5577.}
%\thankstext{t2}{First supporter of the project}
%\thankstext{t3}{Second supporter of the project}
%\runauthor{J.~Olsson, T.~Pavlenko and F.~L.~Rios}

\affiliation{KTH Royal Institute of Technology}

\address{Department of Mathematics \\
KTH Royal Institute of Technology \\
SE-100 44 Stockholm, Sweden \\
%\printead{e1}, \printead*{e2}, \printead*{e3}
}
\end{aug}

\runauthor{J.~Olsson, T.~Pavlenko and F.~L.~Rios}
\runtitle{Sequential sampling of junction trees}
	
%\begin{abstract}
% -*- root: main.tex -*-
\begin{abstract}
The \emph{junction-tree} representation provides an attractive structural property for organizing a \emph{decomposable graph}.
In this study, we present two novel stochastic algorithms, which we call the 
\emph{\jte}  and \emph{\jtc}  for sequential sampling of junction trees for decomposable graphs.
We show that recursive application of the \jte, expanding incrementally the underlying graph with one vertex at a time, has full support on the space of junction trees with any given number of underlying vertices.
On the other hand, the \jtc\, provides a complementary operation for removing vertices in the underlying decomposable graph of a junction tree, while maintaining the junction tree property.

A direct application of our suggested algorithms is demonstrated in a \emph{sequential-Monte-Carlo setting} designed for sampling from distributions on spaces of decomposable graphs.
%where the \jteabbr\, can be effectively employed as proposal kernel. 
%see 
%the companion paper 
%\citet{olsson2019}. 
%The relevance of our suggested algorithm is justified from a \emph{\smc} perspective, as 
% where the \jteabbr\, can be effectively employed as proposal kernel in \smcabbr\, algorithms sampling from distributions on spaces of decomposable graphs; see the companion paper \citet{olsson2019}. 
%The \jteabbr\, can be cast into a variety of interesting inferential problems in probabilistic graphical models.  
Numerical studies illustrate the utility of the proposed algorithms for combinatorial computations on decomposable graphs and junction trees.  %two examples: in the first one, how the \jteabbr\, can be incorporated successfully into a \emph{particle Gibbs} sampler for Bayesian structure learning in decomposable graphical models; in the second one, we 
%providing an unbiased estimator of the number of decomposable graphs for a given number of vertices.  
All the methods proposed in the paper are implemented in the Python library {\it trilearn}.  
\end{abstract}

%\end{abstract}

\end{frontmatter}

\section{Introduction}
Decomposable graphs and their junction-tree representations as auxiliary data structure have been used in various contexts; examples include computational geometry, large-scale estimation of random-graph models with local dependence, statistical inference (such as sparse covariance- and concentration-matrix computation), contingency-table analysis, probabilistic graphical models, and message passing; see \emph{e.g.} \citep{lauritzen1996, pearl1997probabilistic, eppstein2009graph}. 
\felix{Punkt 1, tillämpningar}

In recent years, decomposable graphs have become an important and popular tool for representing dependence structures in probabilistic graphical models.
In the Bayesian setting, structure learning in such models amounts to calculating a posterior distribution defined on the space of decomposable graphs in order to gain understanding of the dependence structure underlying observed data. Due to the inherent complexity of distributions defined on such spaces, this has lead to an increasing interest in Monte Carlo methods for sampling-based approximations. 
\felix{Punkt 2}
Typical sampling strategies are based on \emph{\mcmc}\, (\mcmcabbr)\, methods, especially variations of the \emph{Metropolis--Hastings algorithm}, where new graphs are proposed by means of random single-edge perturbations, and the set of possible moves generated by subjecting a given graph to such perturbations defines a neighborhood in the decomposable-graph space; see \emph{e.g}. \citep{frydenberg1989decomposition, Giudici01121999,Thomas:2009aa, Green01032013}.
\felix{Punkt 3}
 However, since the only vertices that may be connected by an edge in a (connected) decomposable graph while maintaining decomposability are those that already have a neighbour in common and the removable edges are necessarily contained in exactly one clique, operations on the edge set are inherently local. As a consequence, an MCMC sampler based on such moves will most likely suffer from poor mixing. 
 
Against this background, it is desirable to explore alternative ways of simulating decomposable graphs. In the present paper we take a different approach which, instead of altering the edge set of a graph with a fixed set of vertices, builds new graphs incrementally, starting from the empty graph and adding vertices one by one. When applied to Monte Carlo simulation, this procedure allows for independent or weakly correlated samples. \felix{Punkt 4}

As we shall see, this approach can be designed more easily if one operates on the extended junction-tree space rather than directly on the space of decomposable graphs. This is feasible also from a statistical point of view; indeed, following \citet{Green01032013}, a given probability mass function on the space of decomposable graphs can always be embedded into an extended version defined on the space of junction trees in such a way that the push-forward distribution of the extended distribution with respect to the underlying graph equals the given probability mass function on the decomposable-graph space. Thus, by running an \mcmcabbr\, sampler producing a trajectory of junction trees targeting the extended distribution, an \mcmcabbr\, trajectory targeting the original distribution is obtained as a by-product by simply extracting the underlying graphs of the trees in the former sequence.\felix{Punkt 5}

We present two novel stochastic algorithms operating on junction-tree structures:
the \emph{\jte}\, (\jteabbr, or the \emph{Christmas-tree algorithm}) and the \emph{\jtc} (\jtcabbr). The \jteabbr\, (\jtcabbr) expands (collapses) a junction tree  \(\tree\) by randomly adding (removing) one vertex to (from) the underlying decomposable graph. At the highest level, the \jteabbr\, can be described in two main steps.
In the first step, the algorithm starts by drawing, at random, a subtree \(\tree'\) of the given tree \(\tree\) using a stochastic version of a depth-first tree-traversal. 
In the second step, a new vertex  \(\gennodea\) is connected to a random subset of each of the cliques in \(\tree'\) to form a new subtree \(\tree''\), which is isomorphic to \(\tree'\). % \jimmy{Subset? You mean subgraph?}
The edges in \(\tree'\) are then removed and each of the nodes in \(\tree''\) are connected to the nodes in \(\tree'\) to which they stem from, while maintaining the junction tree property. 
%a new vertex  \(\gennodea\) is then connected to a random subset \jimmy{Subset? You mean subgraph?} of each of the cliques in \(\tree'\) to form a new subtree \(\tree''\), which is isomorphic to \(\tree'\).
On the other hand, the \jtcabbr\, starts by selecting the unique subtree \(\tree''\) induced by a given vertex \(\gennodeb \).
The second step amounts to drawing, for each clique \(\jtnode'\) in \(\tree''\), a neighboring clique \(\jtnode\) not containing \(\gennodeb\), for which \(\jtnode'\) is substituted while maintaining the junction tree property. %\jimmy{Not completely clear to me what you mean by ``neighboring clique'' here, since $T''$ is in some sense a disconnected component? As far as I remember, you expand the tree by connecting randomly the new cliques in $T''$ to the old ones in $T'$ when possible?} 
%Then, for each clique \(\jtnode'\) in \(\tree''\) a neighbor \(\jtnode\), that share the rest of the vertices apart from \(\gennodeb\), is selected and substituted for \(\jtnode'\), while keeping the junction tree property.\jimmy{And?}
The two algorithms are complementary in the sense that the output obtained by subjecting a given tree $\tree$ to either the \jteabbr\, followed by the \jtcabbr, or, vice versa, the \jtcabbr\, followed by the \jteabbr, coincides with $\tree$ with positive probability. \felix{punkt 6}

As we shall see, the \jteabbr\, and \jtcabbr\, have two theoretical properties that are of fundamental importance in Monte Carlo simulation. First, the transition probabilities of the induced Markov kernels are available in a closed form and can be computed efficiently; second, the \jteabbr\, algorithm is able to generate, with positive probability, when applied sequentially, all junction trees with a given number of vertices in its underlying graph.

In order to illustrate their application potential, we employ jointly the \jteabbr\, and the \jtcabbr\ to construct a \emph{sequential Monte Carlo} (SMC) \emph{sampler} \cite{10.2307/3879283} sampling from more or less arbitrary distributions defined on spaces of decomposable graphs. In this construction, which relies on the above-mentioned junction-tree embedding proposed by \citet{Green01032013}, the \jtcabbr\, is used to extend the target distribution to a path space of junction trees of increasing dimension, whereas the \jteabbr\, is used to generating proposals on this new space.
\felix{punkt 6 igen}
 Using this \smcabbr\, approach, we are able to provide unbiased estimates of the numbers of decomposable graphs and junction trees for any given number of vertices. This importance-sampling approach to the combinatorics of decomposable graphs and junction trees is the first of its kind. In the follow-up paper \citep{olsson2019}, we cast further such an \smcabbr\, sampler into the framework of \emph{particle Gibbs samplers} \cite{andrieu2010particle}. The resulting MCMC algorithm, which relies heavily on on the \jteabbr\, and \jtcabbr\, derived in the present paper, allows for \emph{global} MCMC moves across the decomposable-graph space and, consequently, fast mixing.
\felix{punkt 8}

%As a by-product of 
The \jteabbr\, is related to other existing approaches of generating junction trees. For instance, the algorithm for generation of junction trees presented in \citep{randchord} has similarities to ours in the sense that it expands the underlying graph incrementally in each step of the algorithm.
However, unlike our proposed \jteabbr, this algorithm is restricted to connected decomposable graphs and transition probabilities are not directly provided. A completely different strategy for decomposable-graph sampling based on  {\it tree-dependent bipartiet graphs} is presented in \citet{elmasri2017decomposable, elmasri2017sub}.
\felix{punkt 3}

The rest of this paper is structured as follows. Section~\ref{sec:preliminaries} introduces some graph notation and a short background on decomposable graphs and junction trees. (For a more detailed presentation, the reader is referred to \emph{e.g.} \citep{ChordalGraphs} or \citep{lauritzen1996}.)
Section~\ref{sec:expanding_a_junction_tree} and Section~\ref{sub:rcta} present the \jteabbr\, and the \jtcabbr\,, respectively, along with their corresponding transition probabilities. Section~\ref{sec:update_mu} provides a novel factorisation of the number of junction trees of a decomposable graph and demonstrates its computational advantage.
The application of the \jteabbr\, and the \jtcabbr\, in the framework of \smcabbr\, sampling is found in Section~\ref{sec:smc} and 
Section~\ref{sec:simulations} contains our numerical study. 
%the first devoted to 
%Bayesian structure learning in Gaussian decomposable models and the second to decomposable graph combinatorics. 
%In the latter example, \smcabbr\, is used for providing unbiased estimates of the number of decomposable graphs of different number of vertices.
Appendix~\ref{appendix:a} contains detailed algorithm descriptions along with the proofs of lemmas and theorems stated in the paper, whereas Appendix~\ref{appendix:b} provides an algorithm, originally presented in \citep{doi:10.1198/jcgs.2009.07129}, for randomly connecting a forest into a tree. 
\felix{9}

Finally, we remark that the code used for generating the examples in the paper is contained in the Python library \emph{trilearn} available at \url{https://github.com/felixleopoldo/trilearn}.

\label{sec:introduction}
% section introduction (end)

\section{Preliminaries}
\label{sec:preliminaries}
% -*- root: opr2018.tex -*-
\subsection{Graph theory}
A pair $(\graphnodeset,\graphedgeset)$ of a \emph{vertex set} $\graphnodeset$ and an \emph{edge set} $\graphedgeset$, where
\(\graphedgeset\) is a set of unordered pairs \( (\gennodea,\gennodeb)\in \graphnodeset \times \graphnodeset \) such that \(\gennodea\ne\gennodeb\),
%$\graphedgeset \subseteq \graphnodeset \times \graphnodeset$
is called an \emph{undirected graph}. 
Two vertices $\gennodea$ and $\gennodeb$ in $\graphnodeset$ are \emph{adjacent} if they are directly connected by an edge, \emph{i.e.}, $(\gennodea, \gennodeb)=(\gennodeb, \gennodea)$ belongs to $\graphedgeset$. 
The \emph{neighbors} $\neig[(\graphnodeset,\graphedgeset)]{\gennodea}$ of a vertex $\gennodea$ is the set of vertices in $\graphnodeset$ adjacent to $\gennodea$. 
A sequence $(\gennodea_j)_{j = 1}^\ell$ of distinct vertices is called an $\gennodea_1$-$\gennodea_\ell$-path, denoted by $\gennodea_1 \sim \gennodea_\ell$, if for all $\jtnodeind \in \{2, \ldots, \ell\}$, $(\gennodea_{\jtnodeind-1}, \gennodea_\jtnodeind)$ belongs to $\graphedgeset$. 
Two vertices $\gennodea$ and $\gennodeb$ are said to be \emph{connected} if there exists an $\gennodea$-$\gennodeb$-path. 
Moreover, a \emph{graph} is said to be connected if all pairs of vertices are connected.
A graph is called a \emph{tree} if there is a unique path between any pair of vertices in the graph.
A \emph{connectivity component} of a graph is a subset of vertices that are pairwise connected. 
A graph is a \emph{forest} 
if all connectivity components induce distinct trees.
Further, two graphs are said to be \emph{isomorphic} if they have the same number of vertices and equivalent edge sets when disregarding the labels of the vertices.

Now, consider a general graph $(\graphnodeset, \graphedgeset)$ which we call $\graph$. 
The \emph{order} and the \emph{size} of $\graph$ refer to the number of vertices $|\graphnodeset|$ and the number of edges $|\graphedgeset|$, respectively.
Let $\genseta$, $\gensetb$, and $\sep$ be subsets of $\graphnodeset$; then the set $\sep$ \emph{separates} $\genseta$ from $\gensetb$ if for all $\gennodea \in \genseta$ and $\gennodeb \in \gensetb$, all paths $\gennodea \sim \gennodeb$ intersect $\sep$. 
We denote this by $\genseta \perp_{\graph} \gensetb \mid \sep$. 
The graph $\graph$ is \emph{complete} if all vertices are adjacent to each other.
A graph $(\graphnodeset^\prime, \graphedgeset^\prime)$ is a \emph{subgraph} of $\graph$ if $\graphnodeset^\prime \subseteq \graphnodeset$ and $\graphedgeset^\prime \subseteq \graphedgeset$.
A \emph{subtree} is a connected subgraph of a tree.
For $\graphnodeset^\prime \subseteq \graphnodeset$, the \emph{induced subgraph} $\graph \lbrack \graphnodeset^\prime \rbrack = (\graphnodeset^\prime, \graphedgeset^\prime)$ is the subgraph of $\graph$ with vertices $\graphnodeset^\prime$ and edge set $\graphedgeset^\prime$ given by the set of edges in $\graphedgeset$ having both endpoints in $\graphnodeset^\prime$. 
A subset of $\graphnodeset$ is a \emph{complete set} if it induces a complete subgraph.
A complete subgraph is called a \emph{clique} if it is not an induced subgraph of any other complete subgraph. 
%A tuple $(\genseta, \gensetb)$ of subsets of $\graphnodeset$ such that $\genseta \cup \gensetb = \graphnodeset$ is a \emph{decomposition} of $\graph$ if $\genseta \cap \gensetb$ is complete, $\genseta$ and $\gensetb$ are nonempty, and $\genseta \perp_{\graph} \gensetb \mid \genseta \cap \gensetb$. 

%\subsection{Decomposable graph an junction trees}
The primer interest of this paper regards \emph{decomposable graphs} and the \emph{junction-tree} representation.
% A tree $\tree$ for which each vertex is a subset of some finite set is said to satisfy the \emph{junction tree property} if for all paths $\jtnode_1 \sim \jtnode_{\pathorder} = ( \jtnode_\jtnodeind )_{\jtnodeind = 1}^\pathorder$ in $\tree$, 
% $
% 	    \jtnode_1 \cap \jtnode_{\pathorder} \subset \bigcap \limits_{\jtnodeind = 1}^\pathorder \jtnode_{\jtnodeind}.
% $
\begin{mydef} \label{def:jt_prop}
A graph $\graph$ is decomposable if its cliques can be arranged in a so-called \emph{junction tree},\emph{ i.e.} a tree whose nodes are the cliques in $\graph$, and where for any pair of cliques $\jtnode$ and $\jtnode'$ in $\graph$, the intersection $\jtnode\cap \jtnode'$ is contained in each of the cliques on the unique path $\jtnode \sim \jtnode'$.
\end{mydef}
%\begin{mydef}[decomposable graph] 
%	A graph $\graph$ is \emph{decomposable} if it is complete or if there exists a decomposition $(\genseta,\gensetb)$ of $\graph$ such that $\graph \lbrack \genseta \rbrack$ and $\graph \lbrack \gensetb\rbrack$ are decomposable.
%\end{mydef}
%\begin{mydef}[junction tree property] \label{def:jt_prop}
%\end{mydef}
%The next theorem binds elegantly the two previous concepts.  
%\begin{mythm}[{\citet[Theorem 4.6]{cowell2003probabilistic}}]
%    \label{thm:jt}
%    A graph $\graph$ is decomposable if and only if there exists a tree $T$ with vertices given by the cliques in $\graph$ such that $T$ satisfies the junction tree property.
%\end{mythm}
% \begin{mydef}[decomposable graph] \label{def:jt_prop}
%     A graph $\graph$ is decomposable if and only if there exists a tree $T$ with vertices given by the cliques in $\graph$ such that $T$ satisfies the junction tree property.
% \end{mydef}
Note that a decomposable graph may have many junction-tree representations (referred to as a junction tree for the specific graph) whereas for any specific junction tree, the underlying graph is uniquely determined.
For clarity, from now on we follow \citet{Green01032013} and reserve the terms vertices and edges for the elements of $\graph$. Vertices and edges of junctions trees will be referred to as \emph{nodes} and \emph{links}, respectively. 
Each link $(\genseta, \gensetb)$ in a junction tree is associated with the intersection $\genseta \cap \gensetb$, which is referred to as a \emph{separator} and denoted by $\jtsep{\genseta}{\gensetb}$. 
The set of distinct separators in a junction tree with graph $\graph$ is denoted by $\seps{\graph}$. 
Since all junction-tree representations of a specific decomposable graph have the same set of separators, we may talk about the separators of a decomposable graph. 
In the following we consider a fixed sequence \((\gennodea_i)_{i=1}^\p,\) of vertices and denote by \(\graphsp[\p]\) the space of decomposable graphs with vertex set \(\{\gennodea_i\}_{i=1}^\p\).
The space of junction-tree representations for graphs in \(\graphsp[\p]\) is analogously denoted by \(\trsp[\p]\). 
%The space of decomposable graphs with the vertices  $\{\gennodea_{i}\}_{i=1}^\p$ %is for simplicity denoted by $\graphsp[\p]$, and the space of junction trees with underlying graph in $\graphsp[\p]$ is denoted by $\trsp[\p]$. 
The graph corresponding to a junction tree $\tree$ is denoted by $\trgr(\tree)$.
We let $\tree_\sep$ denote the subtree induced by the nodes of a junction tree $\tree$ containing the separator $\sep$ and let $\jtsepforest{\sep}(\tree)$ denote the forest obtained by deleting, in $\tree$, the links associated with $\sep$. 

\subsection{Notational convention}
For any finite set $\genseta$, we denote its power set by $\potset(\genseta)$.
The uniform distribution over the elements in $\genseta$ is denoted by $\unif[\genseta]$. 
We assume that all random variables are well defined on a common probability space $(\Omega, \potset{(\Omega)}, \prob)$.  
Abusing notation, we will always use the same notation for a random variable and a realisation of the same. 
Further, we will use the same notation for a distribution and its corresponding probability density function.
For an arbitrary space $\Xset$, the \emph{support} of a nonnegative  function $h$ defined on $\Xset$ is denoted by $\supp(h) \eqdef \{x \in \Xset: h(x)>0\}$.
For all sequences $(\genseta_j)_{j = 1}^\ell$, we apply the convention $(\genseta_j)_{j = 1}^0 \eqdef  \emptyset$. 
Moreover, for all sequences $(\genseta_j)_{j = 1}^\ell$ of sets and all nonempty sets $\gensetb$, we set $\gensetb \cap \left (\cup_{j = 1}^0 \genseta_j \right) \eqdef \gensetb$. 
We denote by $\mathbb{N}$ the set of natural numbers \(\{1,2,\dots\}\) and by $\mathbb{N}_{p}$ the set  \(\{1,\dots,\p\}\) for some \(\p\in \mathbb{N}\).

The notation, $\disc(\{ w_\ell \}_{\ell = 1}^\N)$ is used to denote the categorical distribution induced by a set $\{ w_\ell \}_{\ell = 1}^\N$ of positive (possibly unnormalised) numbers. 
More specifically, writing $x \sim \disc(\{ w_\ell \}_{\ell = 1}^\N)$ means that the random variable $x$ takes on the value $\ell \in \nn[\N]$ with probability $\textstyle w_\ell / \sum_{\ell' = 1}^\N w_{\ell'}$.

\section{Expanding and collapsing junction trees}
\label{sec:expanding_a_junction_tree}

% -*- root: opr2018.tex -*-

\subsection{Sampling subtrees} % (fold)
\label{sub:random_sampling_of_subtrees}

% -*- root: opr2018.tex -*-
Before presenting our main algorithm for expanding junction trees, we present
%, in Section~\ref{sub:random_sampling_of_subtrees}, 
one of its crucial subroutines: an algorithm for random sampling of subtrees of a given, arbitrary tree \(\tree\).
%The algorithm presented in this section outputs a random subtree of a given tree $\tree$. 
It takes two tuning parameters, $(\alpha, \beta) \in (0,1)^2$, which together control the number of vertices in the subtree. 
%The main idea of the sampling procedure is described next. 

At the initial step of the algorithm, a Bernoulli trial with probability $\beta$ is performed in order to decide whether to return the empty subtree or not.  
If the empty subtree was not drawn we pick a vertex 
$\gennodea$ uniformly at random and let the subtree grow stochastically around this node. 
More specifically, $\gennodea$ serves as a root in a procedure similar to a \emph{\bfft}, where 
instead of directly adding non-visited neighbors of a vertex to the queue of vertices to be visited, as done in the standard \bfftabbr, each non-visited neighbor of $\gennodea$ is added with probability $\alpha$. 
Thus, the parameter $\alpha$ controls the number of vertices in the subgraph, given that it is nonempty. 
The algorithm is presented in full detail in Algorithm~\ref{alg:sbfs} in Appendix \ref{appendix:a}. 

The induced probability of extracting a subtree $\tree' = (\graphnodeset^\prime,\graphedgeset^\prime)$ from a given tree $\tree = (\graphnodeset, \graphedgeset)$ is given by 
\begin{align*}
\label{eq:prob_subtree}
    \subtreeker{\subtreecont,\subtreenonempty}(\tree, \tree^\prime)=
    \begin{cases}
        1-\subtreenonempty, & \text{ if } \tree^\prime=(\emptyset, \emptyset) \\
        \subtreenonempty|\graphnodeset^\prime|\subtreecont^{|\graphnodeset^\prime|-1}(1-\subtreecont)^w/|\graphnodeset|, &\text{ otherwise},
    \end{cases}
\end{align*}
where $w=w(\tree,\tree')$ is the number of components in the forest $\tree\lbrack \graphnodeset\setminus\graphnodeset'\rbrack$.
The factor $|\graphnodeset^\prime|$ stems from the fact that any vertex in $\graphnodeset^\prime$ is a valid root vertex 
in the breadth-first traversal-like procedure and the probability of extracting a certain subtree is the equal for each choice.

% subsection random_sampling_of_subtrees (end)

%\subsection{Expanding junction trees} % (fold)
\subsection{Expanding junction trees} % (fold)
\label{sub:cta}
% -*- root: opr2018.tex -*-
In this section we present the main contribution of this paper, namely an algorithm for expanding 
randomly a given junction tree $\tree \in \trsp[\jtind]$ into a new junction tree $\tree[+] \in \trsp[\jtind + 1]$ such that $\trgr({\tree})$ is the induced subgraph of $\trgr(\tree[+])$. This operation defines a Markov transition kernel $\prop{m}^{\subtreecont,\subtreenonempty}:\trsp[\jtind] \times \potset(\trsp[\jtind+1]) \rightarrow [0,1]$, whose expression is derived at the end of this section. 
Since the theorems in this section hold for any valid choice of $\alpha$ and $\beta$, we sometimes drop these from the notation and simply write $\prop{\jtind}$ instead of $\prop{\jtind}^{\alpha,\beta}$.
The full procedure, which in the following will be referred to as the \emph{\jte}, is given below.
Further details of these steps are provided in  Algorithm~\ref{alg:cta} in Appendix~\ref{appendix:a}.

\begin{jtexpalg}
Let \(\tree\) be a junction tree in \(\trsp[\jtind]\).
\setlist[enumerate]{align=left}
\begin{enumerate}
    \item[\emph{Step 1.} ] Sample a random subtree \(\tree'=(\graphnodeset', \graphedgeset')\) of \(\tree\). \label{alg:jte:step1}
\end{enumerate}

If \(\tree'\) is empty, proceed as follows:
 \begin{enumerate}
 \setlength\itemsep{1em}
    \item[\emph{Step 2.} ]Create a new node containing merely the vertex \(\newvert\) and connect it to an arbitrary node in \(\tree\).\label{alg:jte:step2}
    \item[\emph{Step 3.} ]Cut the new tree at the empty separator to obtain a forest.\label{alg:jte:step3}
    \item[\emph{Step 4.} ]Randomly reconnect the forest into a tree (see Algorithm~\ref{alg:jt_shuffling} in Appendix~\ref{appendix:b}). \label{alg:jte:step4}
\end{enumerate}

If \(\tree'\) is non-empty,
 enumerate the nodes in \(\graphnodeset'\) as  \( (\jtnode[\jtnodeind] )_{j = 1}^\jtsubtreeorder \),
    and let, for each \(\jtnodeind\),  \(\jtnodeseps{\jtnodeind}\) be defined as the union of the separators associated with \(\jtnode[\jtnodeind]\) in \(\tree'\).
Proceed as follows:
\begin{enumerate}
 \setlength\itemsep{1em}
    \item[\emph{Step 2\(^\ast\).} ] \label{alg:jte:step2*}
    For each node \(\jtnode[\jtnodeind]\), draw uniformly at random a (possibly empty) subset \(\jtnodearb{\jtnodeind}\) of \(\jtnode[\jtnodeind] \setminus \jtnodeseps{\jtnodeind}\) to
    create a new unique node \( \jtnewnode{\jtnodeind} \),  consisting of \(\jtnodearbrest{\jtnodeind} \eqdef \jtnodeseps{\jtnodeind}\cup \jtnodearb{\jtnodeind}\) and the vertex \(\newvert\).
     Note that for  \( \jtnewnode{\jtnodeind} \) to be unique, \(\jtnodearb{\jtnodeind}\) has to be non-empty if any separator  associated with \(\jtnode_{\jtnodeind}\) in \(\tree'\) equals \(\jtnodeseps{\jtnodeind}\). 
    If \(\jtnode[\jtnodeind]\) was engulfed in \(\jtnewnode{\jtnodeind}\) (i.e. $ \jtnodearbrest{\jtnodeind} = \jtnode_{\jtnodeind}$), simply delete \(\jtnode[\jtnodeind]\).
   
    \item[\emph{Step 3\(^\ast\).} ]\label{alg:jte:step3*} To the nodes in \(( \jtnewnode{\jtnodeind} )_{j = 1}^\jtsubtreeorder\), assign links which replicate the structure of  \(\tree'\). Then remove the links in \(\tree'\) and connect by an link each \(\jtnode[\jtnodeind]\) to its corresponding new node \(\jtnewnode{\jtnodeind}\).
    \item[\emph{Step 4\(^\ast\).} ] \label{alg:jte:step4*}
    For each node \(\jtnode[\jtnodeind]\), the neighbors whose links can be moved to \(\jtnewnode{\jtnodeind}\) while maintaining an equivalent junction tree, are distributed uniformly between \(\jtnode[\jtnodeind]\) and \(\jtnewnode{\jtnodeind}\).
    The set of neighbors of  \(\jtnewnode{\jtnodeind}\) is denoted by \(\jtnodestomove{\jtnodeind}\).
    
\end{enumerate}
\end{jtexpalg}

When using the subtree sampler provided in Section~\ref{alg:sbfs} at Step 1, the parameters $\subtreecont$ and $\subtreenonempty$ have clear impacts on the sparsity of the outcome $\tree[+]$ of the \jteabbr;
%and consequently on $\trgr(\tree[+])$;
more specifically, since each node in the selected subtree will give rise to a new node in $\tree[+]$, $\subtreecont$ controls the number of nodes containing the new vertex $\newvert$. 
The parameter $\subtreenonempty$ is simply interpreted as the probability of $\newvert$ being connected to some vertex in $\trgr(\tree)$.

% -*- root: opr2018.tex -*-
\begin{myex}
\label{ex:cta_expansion}
We illustrate two possible scenarios for how the junction tree in Figure~\ref{fig:graph_and_jt} with underlying vertex set \(\graphnodeset=\{1,\dots,9\}\) could be expanded by the vertex $10$. 
Figure~\ref{fig:expanded_graph_and_jt} shows the possible scenario where the subtree picked at 
Step~1 
%Line~\ref{alg:cta:draw_subtree} 
is empty.
Figure~\ref{fig:expanded_graph_and_jt_nonempty} demonstrates the possible scenario where the subtree sampled at 
Step~1 
%Line~\ref{alg:cta:draw_subtree} 
contains the nodes $\jtnode[1]=\{3,4\}$, $\jtnode[2]=\{1,4,5\}$, and $\jtnode[3]=\{2,5,6\}$, colored in blue. 
The new nodes, colored in red, are $\jtnodearbrest{1}^+=\{3,4,10\}$, $\jtnodearbrest{2}^+=\{4,5,10\}$, and $\jtnodearbrest{3}^+=\{5,6,10\}$, built from the sets $\jtnodeseps{1}=\{4\}$, $\jtnodeseps{2}=\{4,5\}$, $\jtnodeseps{3}=\{5\}$ and $\jtnodearb{1}=\{3\}$, $\jtnodearb{2}=\emptyset$, $\jtnodearb{3}=\{6\}$. 
The sets of moved neighbors are $\jtnodestomove{1}=\emptyset$, $\jtnodestomove{2}=\emptyset$ and $\jtnodestomove{3}=\{\{5,6,9\}\}$.

Note that in this example, \(\jtnode_3\) is a leaf node in the resulting tree, making it look like decoration in a Christmas tree.
 \begin{figure}
    \centering
    \begin{subfigure}[b]{0.35\textwidth}
        \resizebox{5cm}{!}{
            % -*- root: ../../main.tex -*-

\tikzstyle{c}=[circle,
                                    thick,
                                    minimum size=0.9cm,
                                    draw=black!100
                                    ]

\begin{tikzpicture}[>=latex,text height=1.5ex,text depth=0.25ex]
    % "text height" and "text depth" are required to vertically
    % align the labels with and without indices.

  % The various elements are conveniently placed using a matrix:
  \matrix[row sep=0.5cm,column sep=0.5cm] {
    % First line: Control input
        &&&
        \node (1) [c]{$1$}; &
        &
        \node (2) [c]{$2$}; &
        \\
        \node (3) [c]{$3$}; &
        &
        \node (4) [c]{$4$}; &
        &
        \node (5) [c]{$5$}; &
        &
        \node (6) [c]{$6$}; &
        \\
        &
        \node (7) [c]{$7$}; &
        &
        \node (8) [c]{$8$}; &
        &
        \node (9) [c]{$9$};
        \\
\\
    };

    % % The diagram elements are now connected through arrows:
    \path[-]
        (1) edge[thick] (4)
        (1) edge[thick] (5)
        (2) edge[thick] (5)
        (2) edge[thick] (6)
        (3) edge[thick] (4)
        (4) edge[thick] (5)
        (4) edge[thick] (8)
        (5) edge[thick] (9)
        (5) edge[thick] (8)
        (5) edge[thick] (6)
        (6) edge[thick] (9)
        ;

\end{tikzpicture}
        }
    \end{subfigure}
    ~\hspace{7mm}%
    \begin{subfigure}[t]{0.35\textwidth}
        \resizebox{5cm}{!}{
            % -*- root: ../../main.tex -*-
\begin{tikzpicture}[>=latex,shorten >=1pt,auto,node distance=3.2cm,
    thick,main node/.style={circle,scale=0.8,fill=white!15,minimum size=0.8 cm,draw,font=\sffamily\Large}]
    \node[main node] (34) [] {$\{3,4\}$};
    \node[main node] (145) [right of=34] {$\{1,4,5\}$};
    \node[main node] (256) [right of=145] {$\{2,5,6\}$};
    \node[main node] (568) [right of=256] {$\{5,6,9\}$};
    \node[main node] (7) [below of=34] {$\{7\}$};
    \node[main node] (458) [below of=145] {$\{4,5,8\}$};

    \path[every node/.style={font=\sffamily\small}]
    (34) edge node [] {$\{4\}$} (145)
    (458) edge node [] {$\emptyset$} (7)
    (145) edge node [] {$\{5\}$} (256)
    (256) edge node [] {$\{5,6\}$} (568)
    (145) edge node [] {$\{4,5\}$} (458);

\end{tikzpicture}
        }
    \end{subfigure}

    \caption{A decomposable graph (left panel) and a corresponding junction tree representation (right panel).}
        \label{fig:graph_and_jt}
\end{figure}
\captionsetup[subfigure]{labelformat=empty, labelsep=none}
 \begin{figure}
    \centering
    \begin{subfigure}[t]{0.35\textwidth}
        \resizebox{5cm}{!}{
            % -*- root: ../../main.tex -*-
\begin{tikzpicture}[>=latex,shorten >=1pt,auto,node distance=3.2cm,
    thick,main node/.style={circle,scale=0.8,fill=white!15,minimum size=0.8 cm,draw,font=\sffamily\Large}]
    \node[main node] (34) [] {$\{3,4\}$};
    \node[main node] (145) [right of=34] {$\{1,4,5\}$};
    \node[main node] (256) [right of=145] {$\{2,5,6\}$};
    \node[main node] (569) [right of=256] {$\{5,6,9\}$};
    \node[main node] (458) [below of=145] {$\{4,5,8\}$};
    \node[main node] (7) [below of=34] {$\{7\}$};
    \node[main node] (10) [below of=256, draw=red] {$\{10\}$};
    \path[every node/.style={font=\sffamily\small}]
    (34) edge node [] {$\{4\}$} (145)
    (458) edge node [] {$\emptyset$} (7)
    (145) edge node [] {$\{5\}$} (256)
    (256) edge node [] {$\{5,6\}$} (569)
    (256) edge node [] {$\emptyset$} (10)
    (145) edge node [] {$\{4,5\}$} (458);
\end{tikzpicture}
        }
        \subcaption{\emph{Step~2}: The new node, $\{10\}$ is connected to an arbitrary existing node.}
        \label{fig:expanded_graph_and_jt:a}
    \end{subfigure}
~\hspace{7mm}%
    \begin{subfigure}[t]{0.35\textwidth}
        \resizebox{5cm}{!}{
                % -*- root: ../../main.tex -*-
\begin{tikzpicture}[>=latex,shorten >=1pt,auto,node distance=3.2cm,
    thick,main node/.style={circle,scale=0.8,fill=white!15,minimum size=0.8 cm,draw,font=\sffamily\Large}]
    \node[main node] (34) [] {$\{3,4\}$};
    \node[main node] (145) [right of=34] {$\{1,4,5\}$};
    \node[main node] (256) [right of=145] {$\{2,5,6\}$};
    \node[main node] (569) [right of=256] {$\{5,6,9\}$};
    \node[main node] (458) [below of=145] {$\{4,5,8\}$};
    \node[main node] (7) [below of=34] {$\{7\}$};
    \node[main node] (10) [below of=256, draw=red] {$\{10\}$};
    \path[every node/.style={font=\sffamily\small}]
    (34) edge node [] {$\{4\}$} (145)
    (145) edge node [] {$\{5\}$} (256)
    (256) edge node [] {$\{5,6\}$} (569)
    (145) edge node [] {$\{4,5\}$} (458);
\end{tikzpicture}
        }
        \caption{\emph{Step~3}: Create forest.}% $\jtsepforest{\emptyset}$.}
        %is created.}
        \label{fig:expanded_graph_and_jt:b}
    \end{subfigure}

    \begin{subfigure}[t]{0.35\textwidth}
        \resizebox{5cm}{!}{
            % -*- root: ../../main.tex -*-
\begin{tikzpicture}[>=latex,shorten >=1pt,auto,node distance=3.2cm,
    thick,main node/.style={circle,scale=0.8,fill=white!15,minimum size=0.8 cm,draw,font=\sffamily\Large}]
    \node[main node] (34) [] {$\{3,4\}$};
    \node[main node] (145) [right of=34] {$\{1,4,5\}$};
    \node[main node] (256) [right of=145] {$\{2,5,6\}$};
    \node[main node] (569) [right of=256] {$\{5,6,9\}$};
    \node[main node] (458) [below of=145] {$\{4,5,8\}$};
    \node[main node] (7) [below of=34] {$\{7\}$};
    \node[main node] (10) [below of=256, draw=red] {$\{10\}$};
    \path[every node/.style={font=\sffamily\small}]
    (34) edge node [] {$\{4\}$} (145)
    (34) edge node [] {$\emptyset$} (7)
    (145) edge node [] {$\{5\}$} (256)
    (256) edge node [] {$\{5,6\}$} (569)
    (458) edge node [] {$\emptyset$} (10)
    (145) edge node [] {$\{4,5\}$} (458);
\end{tikzpicture}
        }
        \subcaption{\emph{Step~4}:
        Reconnect 
        %$\jtsepforest{\emptyset}$ 
        the forest into a tree.}
        \label{fig:expanded_graph_and_jt:c}
    \end{subfigure}
    \caption{A possible expansion of the junction tree in Figure~\ref{fig:graph_and_jt}, where the empty subtree was drawn 
    at Step 1.
%    on Line~\ref{alg:cta:draw_subtree} in Algorithm~\ref{alg:cta}.
}
    \label{fig:expanded_graph_and_jt}

\end{figure}
\captionsetup[subfigure]{labelformat=empty, labelsep=none}
 \begin{figure}
    \centering
    \begin{subfigure}[t]{0.35\textwidth}
        \resizebox{5cm}{!}{
            % -*- root: ../../main.tex -*-
\begin{tikzpicture}[>=latex,shorten >=1pt,auto,node distance=3.2cm,
    thick,main node/.style={circle,scale=0.8,fill=white!15,minimum size=0.8 cm,draw,font=\sffamily\Large}]
    \node[main node] (34) [draw=blue] {$\{3,4\}$};
    \node[main node] (145) [draw=blue,right of=34] {$\{1,4,5\}$};
    \node[main node] (256) [draw=blue,right of=145] {$\{2,5,6\}$};
    \node[main node] (569) [right of=256] {$\{5,6,9\}$};
    \node[main node] (458) [below of=145] {$\{4,5,8\}$};
    \node[main node] (7) [below of=34] {$\{7\}$};
    \path[every node/.style={font=\sffamily\small}]
    (34) edge node [] {$\{4\}$} (145)
    (458) edge node [] {$\emptyset$} (7)
    (145) edge node [] {$\{5\}$} (256)
    (256) edge node [] {$\{5,6\}$} (569)
    (145) edge node [] {$\{4,5\}$} (458);
\end{tikzpicture}
        }
        %\includegraphics[width=\textwidth]{}
        %\subcaption{Select a subtree (blue).}
        \subcaption*{\emph{Step~1}: Subtree simulation.}
    \end{subfigure}
~\hspace{7mm}%
    \begin{subfigure}[t]{0.35\textwidth}
        \resizebox{5cm}{!}{
            % -*- root: ../../main.tex -*-
\begin{tikzpicture}[>=latex,shorten >=1pt,auto,node distance=3.2cm,
    thick,main node/.style={circle,scale=0.8,fill=white!15,minimum size=0.8 cm,draw,font=\sffamily\Large}]
    \node[main node] (3410) [draw=red] {$\{3,4,10\}$};
    \node[main node] (4510) [draw=red,right of=3410] {$\{4,5,10\}$};
    \node[main node] (5610) [draw=red,right of=4510] {$\{5,6,10\}$};
    \node[main node] (145) [draw=blue,below of =4510] {$\{1,4,5\}$};
    \node[main node] (256) [draw=blue,right of=145] {$\{2,5,6\}$};
    \node[main node] (568) [right of=256] {$\{5,6,9\}$};
    \node[main node] (457) [below of=145] {$\{4,5,8\}$};
    \node[main node] (7) [left of=457] {$\{7\}$};
    \path[every node/.style={font=\sffamily\small}]
    (457) edge node [] {$\emptyset$} (7)
    (145) edge node [] {$\{5\}$} (256)
    (256) edge node [] {$\{5,6\}$} (568)
    (145) edge node [] {$\{4,5\}$} (457);

\end{tikzpicture}
        }
        %\includegraphics[width=\textwidth]{}
        %\subcaption{Create new nodes (red).}
        \subcaption{\emph{Step~2}\(^\ast\): Node creation.}
    \end{subfigure}
    \begin{subfigure}[t]{0.35\textwidth}
        \resizebox{5cm}{!}{
            % -*- root: ../../main.tex -*-
\begin{tikzpicture}[>=latex,shorten >=1pt,auto,node distance=3.6cm,
    thick,main node/.style={circle,scale=0.8,fill=white!15,minimum size=0.8 cm,draw,font=\sffamily\Large}]
    \node[main node] (3410) [draw=red] {$\{3,4,10\}$};
    \node[main node] (4510) [draw=red,right of=3410] {$\{4,5,10\}$};
    \node[main node] (5610) [draw=red,right of=4510] {$\{5,6,10\}$};

    \node[main node] (145) [draw=blue,below of=4510] {$\{1,4,5\}$};
    \node[main node] (256) [draw=blue,right of=145] {$\{2,5,6\}$};
    \node[main node] (568) [right of=256] {$\{5,6,9\}$};
    \node[main node] (457) [below of=145] {$\{4,5,8\}$};
    \node[main node] (7) [left of=457] {$\{7\}$};
    \path[every node/.style={font=\sffamily\small}]

    (3410) edge node [] {$\{4,10\}$} (4510)
    (457) edge node [] {$\emptyset$} (7)

    (4510) edge node [] {$\{5,10\}$} (5610)
    (4510) edge node [] {$\{4,5\}$} (145)

    (5610) edge node [] {$\{5,6\}$} (256)

    (256) edge node [] {$\{5,6\}$} (568)
    (145) edge node [] {$\{4,5\}$} (457);

\end{tikzpicture}
        }
        %\includegraphics[width=\textwidth]{}
        %\subcaption{Replicate the edge structure of the subtree for the new nodes.}
        \subcaption{\emph{Step~3}\(^\ast\): Structure replication.}
        %\label{fig:jte:reloc}
    \end{subfigure}
~\hspace{7mm}%
    \begin{subfigure}[t]{0.35\textwidth}
        \resizebox{5cm}{!}{
            % -*- root: ../../main.tex -*-

\begin{tikzpicture}[>=latex,shorten >=1pt,auto,node distance=3.6cm,
    thick,main node/.style={circle,scale=0.8,fill=white!15,minimum size=0.8 cm,draw,font=\sffamily\Large}]
    \node[main node] (3410) [draw=red] {$\{3,4,10\}$};
    \node[main node] (4510) [draw=red,right of=3410] {$\{4,5,10\}$};
    \node[main node] (5610) [draw=red,right of=4510] {$\{5,6,10\}$};

    \node[main node] (145) [draw=blue,below of=4510] {$\{1,4,5\}$};
    \node[main node] (256) [draw=blue,right of=145] {$\{2,5,6\}$};
    \node[main node] (568) [right of=256] {$\{5,6,9\}$};
    \node[main node] (457) [below of=145] {$\{4,5,8\}$};
    \node[main node] (7) [left of=457] {$\{7\}$};
    \path[every node/.style={font=\sffamily\small}]

    (3410) edge node [] {$\{4,10\}$} (4510)
    (457) edge node [] {$\emptyset$} (7)
    (4510) edge node [] {$\{5,10\}$} (5610)
    (4510) edge node [] {$\{4,5\}$} (145)

    (5610) edge node [] {$\{5,6\}$} (256)

    (5610) edge node [] {$\{5,6\}$} (568)
    (145) edge node [] {$\{4,5\}$} (457);

\end{tikzpicture}
        }
        %\includegraphics[width=\textwidth]{}
        %\subcaption{Move a subset of the neighbors of the old nodes to the new nodes.}
        \subcaption{\emph{Step~4}\(^\ast\): Neighbor relocation.}
    \end{subfigure}
    \caption{A possible outcome of the \jteabbr\, where a non-empty subtree was drawn in the expansion of the junction tree in Figure~\ref{fig:graph_and_jt}.} 
    \label{fig:expanded_graph_and_jt_nonempty}
\end{figure} 
\end{myex} 

\begin{figure}[ht] 
    \captionsetup[subfigure]{aboveskip=-1pt,belowskip=-1pt}
    \centering
    \begin{subfigure}[t]{0.06\textwidth}
        \centering
         % -*- root: ../../../main.tex -*-

\input{figures/tikz/node_style.tikz}

\begin{tikzpicture}[>=latex,shorten >=1pt,auto,node distance=3.2cm,
    thick,main node/.style={circle,scale=0.4,fill=white!15,minimum size=0.8 cm,draw,font=\sffamily\Large}]
    \node[main node] (1) [draw=black] {$\{1\}$};

\end{tikzpicture}
        \caption*{$\tree_1$}
        \label{fig:T1}
    \end{subfigure}
    \hspace{1cm}
    \begin{subfigure}[t]{0.06\textwidth}
        \centering
        % -*- root: ../../../main.tex -*-

\input{figures/tikz/node_style.tikz}
\begin{tikzpicture}[>=latex,text height=0.9ex,text depth=0.15ex]
  \matrix[row sep=0.2cm, column sep=0.2cm] {
        \node (1) [c]{$1$}; \\
    };

    \path[-]
        ;
\end{tikzpicture}
        \caption*{$\graph[1]$}
        \label{fig:G1}
    \end{subfigure}

    \vspace{0.7cm}
    \begin{subfigure}[t]{0.06\textwidth}
        \centering
        % -*- root: ../../../opr2018.tex -*-

\input{figures/tikz/node_style.tikz}

\begin{tikzpicture}[>=latex,shorten >=1pt,auto,node distance=3.2cm,
    thick,main node/.style={circle,scale=0.4,fill=white!15,minimum size=0.8 cm,draw,font=\sffamily\Large}]
    \node[main node] (1) [draw=blue] {$\{1\}$};

\end{tikzpicture}
        \caption*{$\tree_1'$}
        \label{fig:T1p}
    \end{subfigure}
    \hspace{1cm}
    \begin{subfigure}[t]{0.06\textwidth}
        \centering
        % -*- root: ../../../main.tex -*-

\input{figures/tikz/node_style.tikz}

\begin{tikzpicture}[>=latex,text height=0.9ex,text depth=0.15ex]
  \matrix[row sep=0.2cm, column sep=0.2cm] {
        \node (1) [subtree]{$1$}; \\
    };

    \path[-]
        ;
\end{tikzpicture}
        \caption*{$\graph[1]'$}
        \label{fig:G1p}
    \end{subfigure}

    \begin{subfigure}[t]{0.08\textwidth}
        \centering
        % -*- root: ../../../main.tex -*-

\input{figures/tikz/node_style.tikz}
\begin{tikzpicture}[>=latex,shorten >=1pt,auto,node distance=3.2cm,
    thick,main node/.style={circle,scale=0.4,fill=white!15,minimum size=0.8 cm,draw,font=\sffamily\Large}]
    \node[main node] (12) [draw=red] {$\{1,2\}$};

\end{tikzpicture}

% \begin{tikzpicture}[>=latex,shorten >=1pt,auto,node distance=3.2cm,
%     thick,main node/.style={circle,scale=0.1,fill=white!15,minimum size=0.6 cm,draw,font=\sffamily\Large}]
%   \matrix[row sep=0.2cm, column sep=0.2cm] {
%         \node (1) [new]{$\{1, 2\}    $}; \\
%     };

%     \path[-]
%         ;
% \end{tikzpicture}
        \caption*{$\tree_2$}
        \label{fig:T2}
    \end{subfigure}
    \hspace{1cm}
    \begin{subfigure}[t]{0.11\textwidth}
        \centering
        % -*- root: ../../../main.tex -*-

\input{figures/tikz/node_style.tikz}

\begin{tikzpicture}[>=latex,text height=0.9ex,text depth=0.10ex]
  \matrix[row sep=0.2cm, column sep=0.2cm] {
        \node (1) [c]{$1$}; &  \node (2) [new]{$2$}; \\
    };

    \path[-]
    (1) edge[thick] (2)
        ;
\end{tikzpicture}
        \caption*{$\graph[2]$}
        \label{fig:G2}
    \end{subfigure}

    \vspace{0.7cm}
    \begin{subfigure}[t]{0.08\textwidth}
        \centering
        % -*- root: ../../../main.tex -*-

\input{figures/tikz/node_style.tikz}

\begin{tikzpicture}[>=latex,shorten >=1pt,auto,node distance=3.2cm,
    thick,main node/.style={circle,scale=0.4,fill=white!15,minimum size=0.8 cm,draw,font=\sffamily\Large}]
    \node[main node] (1) [draw=black] {$\{1,2\}$};

\end{tikzpicture}
        \caption*{$\tree_2'$} 
        \label{fig:T2p}
    \end{subfigure}
    \hspace{1cm}
    \begin{subfigure}[t]{0.11\textwidth}
        \centering
        % -*- root: ../../../main.tex -*-

\input{figures/tikz/node_style.tikz}

\begin{tikzpicture}[>=latex,text height=0.9ex,text depth=0.10ex]
  \matrix[row sep=0.2cm, column sep=0.2cm] {
        \node (1) [c]{$1$}; & \node (2) [c]{$2$}; \\
    };

    \path[-]
    (1) edge[thick] (2)
        ;
\end{tikzpicture}
        \caption*{$\graph[2]'$}
        \label{fig:G2p}
    \end{subfigure}
    ~

    \begin{subfigure}[t]{0.16\textwidth}
        \centering
        % -*- root: ../../../main.tex -*-
\begin{tikzpicture}[>=latex,shorten >=1pt,auto,node distance=3.2cm,
    thick,main node/.style={circle,scale=0.4,fill=white!15,minimum size=0.8 cm,draw,font=\sffamily\Large}]
    \node[main node] (12) [draw=black] {$\{1,2\}$};
    \node[main node] (3) [draw=red,right of=12] {$\{3\}$};
    \path[every node/.style={font=\sffamily\small}]
    (12) edge node [] {$\emptyset$} (3);

\end{tikzpicture}
        \caption*{$\tree_3$}
        \label{fig:T3}
    \end{subfigure}
    \hspace{1cm}
    \begin{subfigure}[t]{0.15\textwidth}
        \centering
        % -*- root: ../../../main.tex -*-

\input{figures/tikz/node_style.tikz}

\begin{tikzpicture}[>=latex,text height=0.9ex,text depth=0.10ex]
  \matrix[row sep=0.2cm, column sep=0.2cm] {
        \node (1) [c]{$1$}; & \node (2) [c]{$2$}; & \node (3) [new]{$3$}; \\
    };

    \path[-]
    (1) edge[thick] (2)
        ;
\end{tikzpicture}
        \caption*{$\graph[3]$}
        \label{fig:G3}
    \end{subfigure}
    \vspace{0.7cm}

    \begin{subfigure}[t]{0.16\textwidth}
        \centering
        % -*- root: ../../../main.tex -*-
\begin{tikzpicture}[>=latex,shorten >=1pt,auto,node distance=3.2cm,
    thick,main node/.style={circle,scale=0.4,fill=white!15,minimum size=0.8 cm,draw,font=\sffamily\Large}]
    \node[main node] (12) [draw=blue] {$\{1,2\}$};
    \node[main node] (3) [draw=blue,right of=12] {$\{3\}$};
    \path[every node/.style={font=\sffamily\small}]
    (12) edge node [] {$\emptyset$} (3);
\end{tikzpicture}
        \caption*{$\tree_3'$}
        \label{fig:T3p}
    \end{subfigure}
    \hspace{1cm}
    \begin{subfigure}[t]{0.15\textwidth}
        \centering
        % -*- root: ../../../main.tex -*-

\input{figures/tikz/node_style.tikz}

\begin{tikzpicture}[>=latex,text height=0.9ex,text depth=0.10ex]
  \matrix[row sep=0.2cm, column sep=0.2cm] {
        \node (1) [c]{$1$}; & \node (2) [subtree]{$2$}; & \node (3) [subtree]{$3$}; \\
    };

    \path[-]
    (1) edge[thick] (2)
        ;
\end{tikzpicture}
    \hspace{1cm}
        \caption*{$\graph[3]'$}
        \label{fig:G3p}
    \end{subfigure}
    ~

    \begin{subfigure}[t]{0.26\textwidth}
        \centering
        % -*- root: ../../../main.tex -*-
\begin{tikzpicture}[>=latex,shorten >=1pt,auto,node distance=3.2cm,
    thick,main node/.style={circle,scale=0.4,fill=white!15,minimum size=0.8 cm,draw,font=\sffamily\Large}]
    \node[main node] (12) [draw=black] {$\{1,2\}$};
    \node[main node] (24) [draw=red, right of=12] {$\{2,4\}$};
    \node[main node] (34) [draw=red, right of=24] {$\{3,4\}$};
    \path[every node/.style={font=\sffamily\small}]
    (12) edge node [] {$\{2\}$} (24)
    (24) edge node [] {$\{4\}$} (34);
\end{tikzpicture}
        \caption*{$\tree_4$}
        \label{fig:T4}
    \end{subfigure}
    \hspace{1cm}
    \begin{subfigure}[t]{0.2\textwidth}
        \centering
        % -*- root: ../../../main.tex -*-

\input{figures/tikz/node_style.tikz}

\begin{tikzpicture}[>=latex,text height=0.9ex,text depth=0.10ex]
  \matrix[row sep=0.2cm, column sep=0.2cm] {
        \node (1) [c]{$1$}; & \node (2) [c]{$2$}; & \node (4) [new]{$4$}; & \node (3) [c]{$3$}; \\
    };

    \path[-]
    (1) edge[thick] (2)
    (2) edge[thick] (4)
    (3) edge[thick] (4)
        ;
\end{tikzpicture}
        \caption*{$\graph_4$}
        \label{fig:G4}
    \end{subfigure}

    \vspace{0.7cm}
    \begin{subfigure}[t]{0.26\textwidth}
        \centering
        % -*- root: ../../../main.tex -*-
\begin{tikzpicture}[>=latex,shorten >=1pt,auto,node distance=3.2cm,
    thick,main node/.style={circle,scale=0.4,fill=white!15,minimum size=0.8 cm,draw,font=\sffamily\Large}]
    \node[main node] (12) [draw=black] {$\{1,2\}$};
    \node[main node] (24) [draw=blue, right of=12] {$\{2,4\}$};
    \node[main node] (34) [draw=black, right of=24] {$\{3,4\}$};
    \path[every node/.style={font=\sffamily\small}]
    (12) edge node [] {$\{2\}$} (24)
    (24) edge node [] {$\{4\}$} (34);
\end{tikzpicture}
        \caption*{$\tree_4'$}
        \label{fig:T4p}
    \end{subfigure}
    \hspace{1cm}
    \begin{subfigure}[t]{0.2\textwidth}
        \centering
        % -*- root: ../../../main.tex -*-

\input{figures/tikz/node_style.tikz}

\begin{tikzpicture}[>=latex,text height=0.9ex,text depth=0.10ex]
  \matrix[row sep=0.2cm, column sep=0.2cm] {
        \node (1) [c]{$1$}; & \node (2) [subtree]{$2$}; & \node (4) [subtree]{$4$}; & \node (3) [c]{$3$}; \\
    };

    \path[-]
    (1) edge[thick] (2)
    (2) edge[thick] (4)
    (3) edge[thick] (4)
        ;
\end{tikzpicture}
        \caption*{$\graph[4]'$}
        \label{fig:G4p}
    \end{subfigure}
    ~

    \begin{subfigure}[t]{0.26\textwidth}
        \centering
        % -*- root: ../../../main.tex -*-
\begin{tikzpicture}[>=latex,shorten >=1pt,auto,node distance=3.2cm,
    thick,main node/.style={circle,scale=0.4,fill=white!15,minimum size=0.8 cm,draw,font=\sffamily\Large}]
    \node[main node] (12) [draw=black] {$\{1,2\}$};
    \node[main node] (245) [draw=red, right of=12] {$\{2,4,5\}$};
    \node[main node] (34) [draw=black, right of=245] {$\{3,4\}$};
    \path[every node/.style={font=\sffamily\small}]
    (12) edge node [] {$\{2\}$} (245)
    (245) edge node [] {$\{4\}$} (34);
\end{tikzpicture}
        \caption*{$\tree_5$}
        \label{fig:T5}
    \end{subfigure}
    \hspace{1cm}
    \begin{subfigure}[t]{0.2\textwidth}
        \centering
        % -*- root: ../../../main.tex -*-

\input{figures/tikz/node_style.tikz}

\begin{tikzpicture}[>=latex,shorten >=1pt,auto,node distance=3.2cm,
    thick,main node/.style={circle,scale=0.4,fill=white!15,minimum size=0.8 cm,draw,font=\sffamily\Large}]
  \matrix[row sep=0.2cm, column sep=0.2cm] {
        \node (1) [c]{$1$}; & \node (2) [c]{$2$}; & \node (4) [c]{$4$}; & \node (3) [c]{$3$}; \\
        && \node (5) [new]{$5$}; &\\
    };

    \path[-]
    (1) edge[thick] (2)
    (2) edge[thick] (4)
    (2) edge[thick] (5)
    (3) edge[thick] (4)
    (4) edge[thick] (5)
    ;
\end{tikzpicture}
        \caption*{$\graph[5]$}
        \label{fig:G5}
    \end{subfigure}
    \caption{Example of a recursive application of the \jteabbr\,with parameters $\subtreecont=0.3$ and $\subtreenonempty=0.9$.}
    \label{fig:alg_run}
\end{figure}
% -*- root: opr2018.tex -*-
\begin{myex}
\label{ex:alg_run}
\autoref{fig:alg_run} should be read in chunks of two rows (except for the first row) and shows the junction trees, the corresponding decomposable graphs and the subgraphs generated by the \jteabbr\, for $\jtind\in\{1,\dots,5\}.$
The left column shows the expansion of the junction trees and the right column shows the underlying decomposable graphs. Subtrees are colored in blue and the new nodes are colored in red. Unaffected nodes are black. Vertices in the underlying graphs are colored analogously.
For example, the subtree $\tree_2^\prime$ selected in the generation of $\tree_3$ on Row~5 is found on Row~4. 
The underlying nodes in $\tree_2^\prime$ for creating $\tree_3$ is also found on Row~4, and so on. 
Note that the subtree $\tree_2'$ used in the creation of $\tree_3$, is the empty tree, thus $\tree_2'$ is black.
The tuning parameters of the junction tree expander are set to $\subtreecont=0.3$ and $\subtreenonempty=0.9$.
\end{myex}

%\input{ex_cta.tex}
%\clearpage

The main reason for operating on junction trees as opposed to decomposable graphs directly is  computational tractability.
Next we provide explicit expressions of the transition kernel $\prop{\jtind}$ of the \jteabbr, for any given \(\jtind\in \nset\).   

For $\tree\in \trsp[\jtind]$ and $\tree[+]$ generated by the \jteabbr, let $\mathsf{\tree}(\tree, \tree[+])$ denote the set of possible subtrees bridging $\tree$ and $\tree[+]$ through the first step of the \jteabbr.
This set contains, depending on $\tree$ and $\tree[+]$, either one unique or two different trees, whose explicit forms are provided by Proposition \ref{thm:uniqueness}.

\begin{myprop}
\label{thm:uniqueness}
    Let $m \in \nset$, $\tree\in \trsp[\jtind]$, and $\tree[+]$ be generated by the \jteabbr.
    If the subtree of $\tree[+]$ induced by the nodes containing the vertex $ \gennodea[\jtind+1]$ has a single node $\{\gennodea[\jtind+1]\} \cup \sep$ with exactly two neighbors $\jtnode[1]$ and $\jtnode[2]$ such that $\jtsep{\jtnode[1]}{\jtnode[2]}=\sep$, then $\mathsf{\tree}(\tree, \tree[+]) =\{(\{\jtnode[1]\}, \emptyset),(\{\jtnode[2]\}, \emptyset)\}$;
    otherwise,
    $\mathsf{\tree}(\tree, \tree[+]) = \{(\graphnodeset', \graphedgeset')\}$ (a single tree), where $\graphnodeset'=\{\jtnode_{\jtnodeind} \in \graphnodeset: \jtnewnode{\jtnodeind} \in \graphnodeset[+] \}$ and $ \graphedgeset'=\{(\jtnode_{\jtnodeind}, \jtnode_{\jtnodeindb})\in \graphedgeset : (\jtnewnode{\jtnodeind},\jtnewnode{\jtnodeindb})\in \graphedgeset[+]'\}$.
    Here $\jtnewnode{\jtnodeind} := \jtnodearbrest{\jtnodeind} \cup \{\gennodea[\jtind+1]\}$ and $\jtnewnode{\jtnodeindb} := \jtnodearbrest{\jtnodeindb} \cup \{\gennodea[\jtind+1]\}$ denote new nodes in $\tree[+]$ and
    $\jtnode_{\jtnodeind}:=\jtnodearbrest{\jtnodeind} \cup \jtnoderest{\jtnodeind}$ and $\jtnode_{\jtnodeindb}:=\jtnodearbrest{\jtnodeindb} \cup \jtnoderest{\jtnodeindb}$ are the corresponding nodes in $\tree$. 
    The sets $ \jtnoderest{\jtnodeind}$ and $ \jtnoderest{\jtnodeindb}$ may be empty.
\end{myprop}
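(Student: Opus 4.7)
My strategy is to invert the CTA's construction: given \(T\) and \(T^+\), I analyze how Steps 2\(^\ast\), 3\(^\ast\), 4\(^\ast\) transform the initial subtree \(T'\) into \(T^+\), and then identify the set of subtrees compatible with the observed \(T^+\). The key structural observation is that Step 3\(^\ast\) replicates the link structure of \(T'\) among the newly created nodes \(\{d_j^+\}\) (those containing \(y_{m+1}\)), so the subtree of \(T^+\) induced by these new nodes is isomorphic to \(T'\). Thus \(V'\) has cardinality equal to the number of new nodes in \(T^+\), and \(E'\) is recovered as the pullback of links between those new nodes.

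The core of the argument is then identifying each new node \(d_j^+ = d_j \cup \{y_{m+1}\}\) with a specific original \(c_j\) in \(T\). By Step 3\(^\ast\), each \(d_j^+\) is linked to its origin \(c_j\) whenever \(c_j\) was not engulfed (\(d_j \subsetneq c_j\)); when \(c_j = d_j\) is engulfed, \(c_j\) itself is replaced by \(d_j^+\). I plan to argue that in the generic case this identification is unambiguous: the origin \(c_j\) is the unique node in \(T\) with \(c_j \cap d_j^+ = d_j\) which is either linked in \(T^+\) to \(d_j^+\) directly, or positioned in \(T^+\) consistently with \(c_j\)'s neighbors in \(T'\). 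This yields \(V'\) and \(E'\) exactly as stated.

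The ambiguity arises precisely when the subtree of \(T^+\) induced by the new nodes consists of a single node \(n^+ = \{y_{m+1}\} \cup s\) with two neighbors \(c_1, c_2\) satisfying \(s_{c_1, c_2} = s\). I will verify that \(T' = (\{c_1\}, \emptyset)\) realizes \(T^+\) by setting \(q_1 = s\) (valid since \(s \subseteq c_1\) follows from \(c_1 \cap c_2 = s\)), so that \(d_1^+ = s \cup \{y_{m+1}\} = n^+\), engulfing \(c_1\) if \(c_1 = s\), and moving the neighbor \(c_2\) in Step 4\(^\ast\); this move is admissible because the separator \(c_1 \cap c_2 = s\) in \(T\) is contained in \(d_1 = s\). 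By symmetry, \(T' = (\{c_2\}, \emptyset)\) also produces \(T^+\). Conversely, I argue these are the only candidates: \(|V'| = 1\) forces a single-node subtree; engulfment of that node forces \(d = s\); and the incidence of \(n^+\) with exactly \(c_1, c_2\) restricts the origin to one of them.

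The main obstacle I anticipate is the bookkeeping around engulfed nodes combined with the random neighbor relocation of Step 4\(^\ast\): I must show that the incidence pattern of each \(d_j^+\) in \(T^+\) — mixing links inherited from Step 3\(^\ast\) with those re-routed in Step 4\(^\ast\) — uniquely specifies the corresponding \(c_j \in V\) except in the symmetric case above, where two admissible origins exist. A secondary subtlety is verifying consistency of the residue decomposition \(c_j = d_j \cup r_j\) (allowing \(r_j = \emptyset\)) with a valid CTA execution, and confirming that larger subtrees \(T'\) cannot accidentally collapse to the single-node configuration of the ambiguous case.
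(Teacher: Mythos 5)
Your strategy is the same as the paper's: invert the generative procedure of Algorithm~\ref{alg:cta}, use the fact that Step~3\(^\ast\) makes the subtree of \(\tree[+]\) induced by the nodes containing \(\gennodea[\jtind+1]\) a replica of \(\tree'\), recover \(\graphedgeset'\) as the pullback of the links among the new nodes, and isolate the symmetric single-node configuration as the only source of ambiguity. Your verification that both \((\{\jtnode[1]\},\emptyset)\) and \((\{\jtnode[2]\},\emptyset)\) realize \(\tree[+]\) in that configuration is essentially the paper's argument and is fine.

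The genuine gap is that the uniqueness half — the ``otherwise'' branch of the proposition — is announced as your ``main obstacle'' but never actually argued, and the missing ingredient is specifically the acyclicity of \(\tree\). The paper's proof runs on the candidate sets \(\jtpotorigins{\jtnodeind}=\{\jtnode\in\graphnodeset[+]:\jtnewnode{\jtnodeind}\cap\jtnode=\jtnodearbrest{\jtnodeind}\}\) and uses two cycle contradictions: (i) in the single-node case with \(|\jtpotorigins{1}|\ge 3\), any valid origin must have all other candidates as its neighbors in \(\tree\) (they can only have arrived next to \(\jtnewnode{1}\) via the Step~4\(^\ast\) relocation), so two distinct valid origins would each be adjacent to a third candidate and to each other, forming a cycle — this is what upgrades ``two neighbors'' to ``\emph{exactly} two neighbors'' in the statement and is why ambiguity does not persist for three or more matching neighbors; (ii) in the multi-node case, if two distinct isomorphic subtrees \(\tree'\) and \(\tree''\) with \(\jtnode_1\neq\jtnode_1'\) but a common neighbor \(\jtnode_2\) both produced \(\tree[+]\), then the links \((\jtnode_1,\jtnode_1')\), \((\jtnode_1,\jtnode_2)\), \((\jtnode_1',\jtnode_2)\) would all lie in \(\tree\), a 3-cycle. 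Your proposed criterion — that the origin is ``the unique node \ldots\ positioned in \(\tree[+]\) consistently with \(\jtnode_{\jtnodeind}\)'s neighbors in \(\tree'\)'' — is not by itself an argument; without the cycle contradictions the claim that the identification is unambiguous outside the symmetric case remains unproved, and this is the substance of the proposition.
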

From a computational point of view, Proposition~\ref{thm:uniqueness} is crucial since it guarantees a tractable expression of $\prop{\jtind}^{\subtreecont, \subtreenonempty}$. 
Before we state this expression we introduce some further notation.
We let $\jtsepcount_\mathsmaller{\trgr(\tree)}(\sep)$ denote the number of possible ways that $\jtsepforest{\sep}(\tree)$, the tree obtained by cutting $\tree_{\sep}$ at the separator $\sep$, can be connected to form a tree; this number is described in more detail in Theorem~\ref{thm:nu_g}. 
Now, the transition probability of 
%Algorithm \ref{alg:cta} 
the \jteabbr\, 
takes the following form
\begin{equation} \label{eq:cta}
    \prop{\jtind}^{\subtreecont, \subtreenonempty}(\tree, \tree[+])
     = \sum_{T' \in \mathsf{T}(T, T_+)} \prob(\tree[+] \mid\tree^\prime, \tree ) \subtreeker{\subtreecont,\subtreenonempty}(\tree, \tree^\prime),  
 \end{equation}
where $\prob(\tree[+] \mid\tree^\prime, \tree)$ is understood as the probability that the \jteabbr\, generates $\tree[+]$ with $\tree$ as input given that $\tree^\prime$ was drawn at
%Line~\ref{alg:cta:draw_subtree}. 
Step~1.
We stress again that the sum in \eqref{eq:cta} has either one or two terms and it is thus easily computed. The conditional probability $\prob(\tree[+] \mid\tree^\prime, \tree)$ takes two different forms depending on whether $\tree'$ is empty or not. 
If $\tree'$ is empty, since $\tree[+]$ is randomised at $\emptyset$, all the $\jtsepcount_{\mathsmaller{\trgr({\tree[+]})}}(\emptyset)$ obtainable equivalent junction trees have equal probability. 
Otherwise, in case of $\tree'$ non-empty, the probability of the subsets $\jtnodearb{\jtnodeind}$ are calculated according to the uniform subset distributions in Step 2\(^\ast\).
%Lines \ref{alg:cta:draw_arb}--\ref{alg:cta:draw_arb_nonempty}.
Observe that, given $\tree$ and $\tree'$, the resulting tree $\tree[+]$ is completely determined by $\{\jtnodearb{\jtnodeind}\}_{\jtnodeind=1}^\jtsubtreeorder$ and $\{\jtnodestomove{\jtnodeind}\}_{\jtnodeind=1}^\jtsubtreeorder$.
Since the pairs $(\jtnodearb{\jtnodeind},\jtnodestomove{\jtnodeind})_{j = 1}^\jtsubtreeorder$ are drawn conditionally independently given $\tree'$ and $\tree$ we obtain 
\begin{equation} \label{eq:cond:prob:factorisation}
    \prob(\tree[+] \mid\tree^\prime, \tree ) = 
    \begin{cases}
        1/\jtsepcount_{\mathsmaller{\trgr({\tree[+]})}}(\emptyset) &\text{if } \tree'=(\emptyset,\emptyset),\\
        \prod_{\jtnodeind=1}^\jtsubtreeorder \prob(\jtnodearb{\jtnodeind} \mid \tree', \tree )  \prob(\jtnodestomove{\jtnodeind} \mid \jtnodearb{\jtnodeind}, \tree', \tree ) &\text{otherwise.}
    \end{cases}
\end{equation}

We examine the probabilities in \eqref{eq:cond:prob:factorisation} in the case where $\tree'$ is nonempty. Since for each $\jtnodeind$, the existence of a node $\jtnode \in \neig[\tree']{\jtnode_{\jtnodeind}}$ such that $\jtnodeseps{\jtnodeind} = \jtsep{\jtnode}{\jtnode_{\jtnodeind}}$ forces $\jtnodearb{\jtnodeind}$ to be nonempty, it holds that 
\begin{align*}
    \prob(\jtnodearb{\jtnodeind}\mid\tree^\prime,\tree) =
    \begin{cases}
        1/(2^{|\jtnode_{\jtnodeind} \setminus \jtnodeseps{\jtnodeind}|} -1 ) & \text{if } \jtnodeseps{\jtnodeind} = \jtsep{\jtnode}{\jtnode_{\jtnodeind}} \mbox{ for some }\jtnode \in \neig[\tree']{\jtnode_{\jtnodeind}}, \\
         1/2^{|\jtnode_{\jtnodeind} \setminus \jtnodeseps{\jtnodeind}|} & \text{otherwise.}
    \end{cases}
\end{align*}

Conditionally upon $\tree'$, $\tree$, and $\jtnodearb{\jtnodeind}$, the probability of each neighbor set $\jtnodestomove{\jtnodeind}$ 
at Step \(4^\ast\) 
%on Line~\ref{alg:cta:draw_random_neighbors} 
follows straightforwardly; indeed, the distribution of $\jtnodestomove{\jtnodeind}$ takes two different forms depending on whether $\jtnode_\jtnodeind$ was engulfed into $\jtnewnode{\jtnodeind}$ (i.e. $ \jtnodearbrest{\jtnodeind} = \jtnode_{\jtnodeind}$) or not.
If so, all of the neighbors of $\jtnode_\jtnodeind$ are moved to $\jtnewnode{\jtnodeind}$ with probability 1. 
Otherwise, it has equal probability over all subsets of $U_\jtnodeind \eqdef \{\jtnode \in \neig[\tree]{\jtnode_{\jtnodeind}} \setminus \graphnodeset' : \sep_{ \jtnode, \jtnode_{\jtnodeind}} \subseteq \jtnodearbrest{\jtnodeind} \}$ giving 
$$
    \prob(\jtnodestomove{\jtnodeind}\mid\jtnodearb{\jtnodeind},\tree^\prime, \tree ) =
    \begin{cases}
        1 & \text{if }  \jtnodearbrest{\jtnodeind} = \jtnode_{\jtnodeind} , \\
        1/2^{|U_\jtnodeind|} & \text{otherwise.}
    \end{cases}
$$
Observe that the simplicity of \eqref{eq:cta} is appealing from a computational point of view. 
In particular, as shown in Section~\ref{sec:simulations}, when $\prop{\jtind}^{\subtreecont, \subtreenonempty}$ is used as a proposal kernel in an \smcabbr\, algorithm, fast computation of the transition probability is crucial, especially as the graph space increases.

An important property of the \jteabbr\, is that for any $\jtind\in \nset$ and \(\tree\in \trsp[\jtind]\), a tree \(\tree[+]\) generated by the \jteabbr\, is also a junction tree.
In addition, \(\trgr(\tree)\) is an induced subgraph of \(\trgr(\tree[+])\), having one additional vertex.
\begin{mythm}
    \label{thm:jt_preservation}
    For any $m \in \nset$ and $\tree\in \trsp[m]$ it holds that
    \begin{enumerate}[label=(\roman*)]
    \setlength\itemsep{0.5em}
        \item 
            $\supp(\prop{m}(\tree, \cdot)) \subseteq \trsp[m+1]$, \label{itm:cta:jt_preservation_prop}
        \item 
            $ \trgr(\tree[+])[\{\gennodea[\ell]\}_{\ell=1}^{\jtind}] = \trgr(\tree)$ for all $ \tree[+]\in \supp(\prop{m}(\tree,\cdot))$. \label{itm:cta:jt_ind_subgraph_prop}
    \end{enumerate}
\end{mythm}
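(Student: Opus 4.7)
The plan is to prove both parts by a case split on whether the subtree $\tree'$ sampled at Step~1 is empty. In each case, (i) will be verified through the running intersection property of $\tree[+]$ (\ref{def:jt_prop}), while (ii) will be verified by tracking what remains of each node of $\tree[+]$ after $\newvert$ is removed.

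\emph{Empty subtree case.} The procedure attaches a singleton node $\{\newvert\}$ to $\tree$ via an empty separator, cuts at every empty separator to form a forest, and reconnects via Algorithm~\ref{alg:jt_shuffling}. The components of this forest are exactly the junction trees of the connectivity components of $\trgr(\tree)$, together with the isolated singleton $\{\newvert\}$, and any subsequent reconnection necessarily runs through empty separators because the components share no vertices. Thus the running intersection property is preserved; see Appendix~\ref{appendix:b}. Claim (ii) is immediate: $\newvert$ occurs only in the singleton $\{\newvert\}$, so $\newvert$ is isolated in $\trgr(\tree[+])$, whereas the non-singleton nodes of $\tree$ are left untouched, so every edge of $\trgr(\tree)$ persists.

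\emph{Nonempty subtree case.} Enumerating the subtree nodes as $(\jtnode[\jtnodeind])_{\jtnodeind=1}^\jtsubtreeorder$, each new node has the form $\jtnewnode{\jtnodeind} = \jtnodearbrest{\jtnodeind} \cup \{\newvert\}$ with $\jtnodearbrest{\jtnodeind} \subseteq \jtnode[\jtnodeind]$; the link $(\jtnode[\jtnodeind], \jtnewnode{\jtnodeind})$, when $\jtnode[\jtnodeind]$ is not engulfed, carries separator $\jtnodearbrest{\jtnodeind}$, and the links among the new nodes replicate the edge structure of $\tree'$. Claim (ii) follows from the observation that removing $\newvert$ from any node of $\tree[+]$ returns either an unchanged node of $\tree$ or a subset $\jtnodearbrest{\jtnodeind}$ of some $\jtnode[\jtnodeind]$, so no new edge among $\{\gennodea[\ell]\}_{\ell=1}^{\jtind}$ is introduced; moreover every engulfed $\jtnode[\jtnodeind]$ is contained in its replacement $\jtnewnode{\jtnodeind}$, so no edge of $\trgr(\tree)$ is lost either. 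For (i), I would classify pairs of nodes of $\tree[+]$ as inherited-inherited, mixed, or new-new, and verify the running intersection property for each. The mixed case is direct because such a path must traverse exactly one link $(\jtnode[\jtnodeind], \jtnewnode{\jtnodeind})$, whose endpoints share $\jtnodearbrest{\jtnodeind}$; the inherited-inherited case reduces, after accounting for the Step~4\(^\ast\) neighbour relocations (which by construction only move a neighbour of $\jtnode[\jtnodeind]$ whose separator lies in $\jtnodearbrest{\jtnodeind}$), to the running intersection property of $\tree$.

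The main obstacle will be the new-new case, where the path $\jtnewnode{\jtnodeind} \sim \jtnewnode{\jtnodeindb}$ passes only through new nodes whose interconnection replicates the structure of $\tree'$: one must show that $\jtnodearbrest{\jtnodeind} \cap \jtnodearbrest{\jtnodeindb}$ is contained in every intermediate $\jtnodearbrest{\cdot}$ (the vertex $\newvert$ being automatically present in every new node). This should follow from the running intersection property of the subtree $\tree'$ in $\tree$ together with the inclusion $\jtnodeseps{\jtnodeind} \subseteq \jtnodearbrest{\jtnodeind}$ enforced by Step~2\(^\ast\), but the argument requires careful bookkeeping of which elements of $\jtnode[\jtnodeind] \setminus \jtnodeseps{\jtnodeind}$ are sampled into $\jtnodearb{\jtnodeind}$ and how intersections along the path are inherited from $\tree$.
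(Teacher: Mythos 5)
Your overall strategy --- a generative case split on whether $\tree'$ is empty, part (ii) read off from how $\newvert$ enters and leaves the nodes, part (i) checked via the running intersection property along paths --- is exactly the route the paper takes, and the sub-case you flag as the main obstacle does go through essentially as you suspect. The clean way to finish it is the identity, valid for new nodes adjacent in the replicated structure, $\jtnewnode{\jtnodeind} \cap \jtnewnode{\jtnodeindb} = \jtsep{\jtnode[\jtnodeind]}{\jtnode[\jtnodeindb]} \cup \{\newvert\}$ (one inclusion from $\jtnodearbrest{\jtnodeind} \subseteq \jtnode[\jtnodeind]$, the other from $\jtsep{\jtnode[\jtnodeind]}{\jtnode[\jtnodeindb]} \subseteq \jtnodeseps{\jtnodeind} \cap \jtnodeseps{\jtnodeindb}$), combined with Lemma~\ref{lem:sep_cond}, which recasts the running intersection property as the statement that the intersection of a path's endpoints is contained in the intersection of the consecutive \emph{separators} along the path. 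With that lemma the intersection of the new nodes along a replicated path equals the corresponding intersection in $\tree$ union $\{\newvert\}$, and no bookkeeping of which elements of $\jtnode[\jtnodeind]\setminus\jtnodeseps{\jtnodeind}$ land in $\jtnodearb{\jtnodeind}$ is needed.

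Two genuine omissions remain. First, membership in $\trsp[\jtind+1]$ requires $\tree[+]$ to be a \emph{tree}; you never check connectivity and acyclicity after the links of $\tree'$ are deleted and the bridging links $(\jtnode[\jtnodeind],\jtnewnode{\jtnodeind})$ are added. The paper disposes of this by noting that the subtrees left after Step~3$^\ast$ are all reattached through the single replicated tree on $\{\jtnewnode{\jtnodeind}\}_{\jtnodeind=1}^{\jtsubtreeorder}$. Second, your mixed case asserts that a path from an inherited node to a new node crosses exactly one bridging link $(\jtnode[\jtnodeind],\jtnewnode{\jtnodeind})$; after Step~4$^\ast$ this is false, since a relocated neighbour $\jtnode \in \jtnodestomove{\jtnodeind}$ hangs directly off $\jtnewnode{\jtnodeind}$. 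The fix, which is what the paper does, is to establish the running intersection property for the tree obtained \emph{before} relocation and then argue separately that each individual move of a neighbour $\jtnode$ with $\jtsep{\jtnode}{\jtnode[\jtnodeind]} \subseteq \jtnodearbrest{\jtnodeind}$ preserves the junction tree property, invoking \citet{doi:10.1198/jcgs.2009.07129}; folding the relocations into the path case analysis as you propose would force you to revisit all three of your cases. Neither gap is fatal to the approach, but both must be filled for a complete proof.
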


The following theorem states that for any $\jtind \in \nset$, all junction trees in $\trsp[\jtind]$ can be generated with positive probability using recursive application of the \jteabbr.
More specifically, we may define the marginal probability
$
    \propmarg{\jtind}(\tree_\jtind) \eqdef \sum_{\tree\in \trsp[\jtind-1]}\propmarg{\jtind-1}(\tree)\prop{\jtind-1}(\tree, \tree_{\jtind}),$ for any $ \tree_\jtind\in \trsp[\jtind],
$
where
$\propmarg{1}((\{\{\gennodea[1]\}\}, \emptyset)) = 1$
and state the following theorem.
% consists of recursive application of the JTE.
%This property is  as established next by in terms of the marginal distribution $\propmarg{\jtind}$ defined as
\begin{mythm}
    \label{thm:characterization}
    For any ordering of vertices \((\gennodea[\ell])_{\ell=1}^\jtind\), $m \in \nset$, it holds that 
    \begin{align*}
        \supp(\propmarg{\jtind}) = \trsp[\jtind]. 
    \end{align*}

%\end{align*}
\end{mythm}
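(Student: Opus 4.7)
The plan is to argue by induction on $m$. The inclusion $\supp(\propmarg{m}) \subseteq \trsp[m]$ follows by induction from Theorem~\ref{thm:jt_preservation}\ref{itm:cta:jt_preservation_prop}, so the content of the theorem is the reverse inclusion $\trsp[m] \subseteq \supp(\propmarg{m})$. The base case $m=1$ is immediate, as $\trsp[1]$ consists of the single junction tree $(\{\{y_1\}\}, \emptyset)$ and $\propmarg{1}$ puts unit mass on it. For the inductive step, assume $\trsp[m] \subseteq \supp(\propmarg{m})$ and fix an arbitrary target $T_+ \in \trsp[m+1]$. Since
\[
\propmarg{m+1}(T_+) \geq \propmarg{m}(T)\, \prop{m}(T, T_+)
\]
for any $T \in \trsp[m]$, it suffices to exhibit a predecessor $T \in \trsp[m]$ with $\prop{m}(T, T_+) > 0$; the induction hypothesis then yields $\propmarg{m+1}(T_+) > 0$.

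To construct $T$, I would collapse $T_+$ at the vertex $y_{m+1}$, namely, remove $y_{m+1}$ from every node of $T_+$ containing it, merge resulting duplicates, discard nodes that become subsumed in a neighbor, and rewire the links accordingly. This is precisely the inverse of the CTA operation, carried out by the junction-tree collapser of Section~\ref{sub:rcta}. By Theorem~\ref{thm:jt_preservation}\ref{itm:cta:jt_ind_subgraph_prop}, the induced subgraph $\trgr(T_+)[\{y_1,\ldots,y_m\}]$ has junction-tree representatives in $\trsp[m]$, and this construction produces one such $T$.

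The remaining task, verifying $\prop{m}(T, T_+) > 0$ via formula~\eqref{eq:cta}, splits into two cases according to whether $y_{m+1}$ has neighbors in $\trgr(T_+)$. If $y_{m+1}$ is isolated, then $T_+$ can only arise from the empty-subtree branch of the CTA applied to $T$: Step 1 draws the empty subtree with probability $1-\beta > 0$, Step 2 attaches the singleton $\{y_{m+1}\}$ to an arbitrary node with positive uniform probability $1/m$, and Steps 3--4 reconnect the forest so as to realise $T_+$ with probability $1/\jtsepcount_{\trgr(T_+)}(\emptyset) > 0$ by the first case of \eqref{eq:cond:prob:factorisation}. If instead $y_{m+1}$ has neighbors, Proposition~\ref{thm:uniqueness} characterises the one or two candidate subtrees $T' \in \mathsf{T}(T, T_+)$ arising from the non-empty branch. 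The value $\subtreeker{\alpha,\beta}(T, T')$ is strictly positive, since all its factors lie in $(0,1]$; moreover, each factor $\prob(\jtnodearb{\jtnodeind} \mid T', T)$ and $\prob(\jtnodestomove{\jtnodeind} \mid \jtnodearb{\jtnodeind}, T', T)$ in the second case of \eqref{eq:cond:prob:factorisation} is strictly positive, being a uniform probability on a nonempty family of subsets in Steps $2^\ast$ and $4^\ast$. In either case $\prop{m}(T, T_+) > 0$, which completes the induction.

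The main obstacle is the construction in the second paragraph: one must verify that the collapsing procedure yields a genuine junction tree in $\trsp[m]$ with the correct structural properties, paying particular attention to the engulfed-node edge case $\jtnodearbrest{\jtnodeind} = \jtnode_{\jtnodeind}$ and to the two-predecessor ambiguity highlighted by Proposition~\ref{thm:uniqueness}. A rigorous description of this inverse operation is precisely the content of the junction-tree collapser algorithm, on which the argument leans.
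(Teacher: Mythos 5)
Your proof is correct and follows essentially the same route as the paper: induction over $m$, with the predecessor $T \in \trsp[m]$ obtained by collapsing $T_+$ at $\gennodea[m+1]$ via the \jtcabbr, and positivity of $\prop{m}(T, T_+)$ then following from the support inclusion $\supp(\bk{m}(T_+,\cdot)) \subseteq \supp(\prop{m}(\cdot,T_+))$ of Theorem~\ref{thm:rcta}\ref{itm:rcta:valid_cta_expansion}. The only difference is cosmetic: where the paper simply cites Theorem~\ref{thm:rcta}\ref{itm:rcta:valid_cta_expansion}, you re-verify the positivity of the transition probability case by case via \eqref{eq:cta} and \eqref{eq:cond:prob:factorisation}, which is redundant but harmless.
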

%Example \ref{ex:alg_run} (and the associated \autoref{fig:alg_run}) illustrates a sample trajectory generated by the CTA. 

\section{Collapsing junction trees}
\label{sub:rcta}
% -*- root: opr2018.tex -*-
In this section, we present the \emph{\jtc}, a reversed version of the \jteabbr, introduced in the previous section.
The idea is to collapse a junction tree $\tree[+]\in \trsp[\jtind+1]$ into a new tree $\tree \in \trsp[\jtind]$ by removing  $\gennodea[\jtind+1]$ from the underlying graph in such a way that $\tree \in \supp(\prop{\jtind}(\cdot,\tree[+]))$.
As will be proved in this section, this procedure defines a Markov kernel $\bk{\jtind} : \trsp[\jtind+1] \times \potset(\trsp[\jtind]) \rightarrow [0,1]$. 
%At the highest level, the idea is to take the subtree spanned by the nodes containing the vertex $\jtind+1$.
%Then we imagine that $\tree[\jtind + 1]$

Next follows a description of the different suboperations in the sampling procedure for $\bk{\jtind}$.
The details of the steps are given in Algorithm~\ref{alg:rcta} in Appendix~\ref{appendix:a}.

\begin{jtcolalg}
Let $\tree[+]$ be a junction tree in $\trsp[\jtind+1]$.
Similarly to the \jteabbr, the \jtcabbr\, takes two different forms depending on whether ${\{\gennodea[\jtind+1]\}}$ is present as a node in $\tree[+]$ or not.
%\setlist[enumerate]{align=left}
%\begin{enumerate}
% \setlength\itemsep{1em}
%    \item[Step~1. ] Check if ${\{\gennodea[\jtind+1]\}}$ is present as a node in $\tree[+]$.
%\end{enumerate} 

If \(\{\gennodea[\jtind+1]\}\) is a node in \(\tree[+]\) proceed as follows:
%Let \(\tree[+]\) be a junction tree in \(\trsp[\jtind+1]\)
\setlist[enumerate]{align=left}
\begin{enumerate}
 \setlength\itemsep{1em}
    \item[Step~1. ] Remove \(\{\gennodea[\jtind+1]\}\) and it incident links to obtain a forest, possibly containing only one tree.
    \item[Step~2. ] Randomly connect the forest into a tree. 
\end{enumerate}

If \(\{\gennodea[\jtind+1]\}\) is not a node in \(\tree[+]\) proceed as follows:
\begin{enumerate}
 \setlength\itemsep{1em}
    \item[Step~1\(^\ast\). ] Let \(\tree[+]'=(\graphnodeset[+]',\graphedgeset[+]')\) be the subtree of \(\tree[+]\) induced by the nodes containing the vertex \(\gennodea[\jtind+1]\) and  enumerate the nodes in \(\graphnodeset[+]'\) by \((\jtnewnode{\jtnodeind})_{\jtnodeind=1}^{|\graphnodeset[+]'|} \).
    \item[Step~2\(^\ast\). ]
   For all \(\jtnodeind \in \{1, \ldots, |\graphnodeset[+]'| \}\), draw at random \(\jtnode_{\jtnodeind}\) from \(\jtpotorigins{\jtnodeind}\), the set of neighbors of \(\jtnewnode{\jtnodeind}\) in \(\tree[+]\) having the associated separator 
    %vertices modulo \( \gennodea[\jtind+1]\). 
    \(\jtnewnode{\jtnodeind}\setminus \{\gennodea[\jtind+1]\}\). 
    If no such neighbor exists, let \(\jtnode_{\jtnodeind} = \jtnewnode{\jtnodeind}\setminus \{\gennodea[\jtind+1]\}\).
    %In this step, we regard \(\tree[+]\) as being generated by the junction tree expander.
    %one of the nodes in \(\tree[+]\) such that \(\jtnode_{\jtnodeind}\) 
    %In this step, we regard \(\tree[+]\) as being generated by the junction tree expander.
    %For each node \(\jtnewnode{\jtnodeind} \in \graphnodeset[+]' =(\jtnewnode{\jtnodeind})_{\jtnodeind=1}^{|\graphnodeset[+]'|} \) in \(\tree[+]'\), and pick one of the nodes \(\jtnode_{\jtnodeind}\) from \(\jtpotorigins{\jtnodeind}\), the set of nodes that could have been used by the junction tree expander to create \(\jtnewnode{\jtnodeind}\).
     %If no such node exist, let \(\jtnode_{\jtnodeind} =\jtnewnode{\jtnodeind} \setminus \{\jtind+1\} \).
    \item[Step~3\(^\ast\). ] Replace each node \(\jtnewnode{\jtnodeind}\) by the corresponding node \(\jtnode_{\jtnodeind}\) in the sense that \(\jtnode_{\jtnodeind}\) is assigned all former neighbors of \(\jtnewnode{\jtnodeind}\).
\end{enumerate}
\end{jtcolalg}
The next example illustrates a reversed version of Example~\ref{ex:cta_expansion}.
\begin{myex}\label{ex:jtc}
Consider collapsing the junction tree in  the bottom right panel of Figure~\ref{fig:expanded_graph_and_jt_nonempty} by the vertex 10.
The induced subgraph \(\tree[+]\), having the nodes \(\jtnewnode{1}=\{3,4,10\}, \jtnewnode{2}=\{4,5,10\}\), and \(\jtnewnode{3}=\{5,6,10\}\) is colored in red in the same subfigure.
Further we see that 
\(\jtpotorigins{1}=\emptyset\) implies that \(\jtnode_1=\{3,4\}\) and  \(\jtpotorigins{2}=\{\{1,4,5\}\}\) implies  \(\jtnode_{2} = \{1,4,5\}\).
By drawing \(\jtnode_{3}=\{2,5,6\}\) from \(\jtpotorigins{3}=\{\{2,5,6\},  \{5,6,9\}\}\), the junction tree in the top left panel of Figure~\ref{fig:expanded_graph_and_jt_nonempty} is obtained.
\end{myex}

The induced transition probability of collapsing $\tree[+] \in \trsp[\jtind+1]$ into a tree $\tree \in \supp(\bk{\jtind}(\tree[+], \cdot))$ has the form
\begin{align*}
    \bk{\jtind}(\tree[+], \tree) =
    \begin{cases}
        1/\jtsepcount_\mathsmaller{\trgr(\tree)}(\emptyset) &\text{ if } \{\gennodea[\jtind+1]\} \in \graphnodeset[+],\\
        1/\prod_{\jtnodeind=1}^{|\graphnodeset[+]'|}\max(1, |\jtpotorigins{\jtnodeind}|) &\text{ otherwise,}
    \end{cases}
\end{align*}
where, as before, $\graphnodeset[+]'$ is the set of nodes in $\tree[+]$ containing  $\gennodea[\jtind+1]$. 
The max operation is needed in order to make the expression well defined even when $\jtpotorigins{\jtnodeind}$ is empty.

%The theoretical properties of Algorithm \ref{alg:rcta} are summarised in Theorem~\ref{thm:rcta} below and the relevance of $\bk{m}$ is discussed in more detail in the next section.
The \jtcabbr\, is a reversed version of the \jteabbr\, in the sense that for all \(\jtind \in \nset\), a junction tree \(\tree\in \trsp[\jtind]\), generated by the \jtcabbr\, from a junction tree \(\tree[+]\in \trsp[\jtind+1]\), can be used as input to the \jteabbr\, to generate \(\tree[+]\).
This property is formulated in the next theorem.
%\newpage
\begin{mythm}
    \label{thm:rcta}
    For all $m \in \nset$ and $\tree[+]\in \trsp[m+1]$,
    \begin{enumerate}[label=(\roman*)] 
    \setlength\itemsep{0.5em}
        \item 
            $\supp(\bk{m}(\tree[+], \cdot)) \subseteq \trsp[m]$, \label{itm:rcta:jt_preservation_prop}
        \item 
            $\supp(\bk{m}(\tree[+], \cdot)) \subseteq \supp(\prop{m}(\cdot,\tree[+]))$,  \label{itm:rcta:valid_cta_expansion}
        \item 
            $\trgr(\tree) = \trgr(\tree[+])[\{\gennodea[\ell]\}_{\ell=1}^{\jtind}]$ for any $\tree \in \supp(\bk{m}(\tree[+], \cdot))$. \label{itm:rcta:jt_ind_subgraph_prop}
    \end{enumerate}
\end{mythm}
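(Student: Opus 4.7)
The proof naturally decomposes into the two cases of the algorithm, according to whether $\{\gennodea[\jtind+1]\}$ is a node of $\tree[+]$ or not. In both cases I would verify (i)--(iii) simultaneously by exhibiting the \jtcabbr\, as an inverse of the \jteabbr.

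\emph{Case 1 ($\{\gennodea[\jtind+1]\}$ is a node of $\tree[+]$).}  Since every node of a junction tree is a maximal complete subgraph, the singleton $\{\gennodea[\jtind+1]\}$ forces $\gennodea[\jtind+1]$ to be an isolated vertex of $\trgr(\tree[+])$, so every incident link carries the empty separator. Removing $\{\gennodea[\jtind+1]\}$ and its links thus produces a forest whose components are exactly those of $\jtsepforest{\emptyset}(\tree[+])$ apart from the singleton $\{\gennodea[\jtind+1]\}$; random reconnection gives a junction tree $\tree$ of the graph obtained by deleting $\gennodea[\jtind+1]$ from $\trgr(\tree[+])$, thereby settling (i) and (iii). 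For (ii), running the \jteabbr\, on this $\tree$ with the empty subtree drawn at Step~1 recreates precisely the same forest (augmented with the singleton $\{\gennodea[\jtind+1]\}$ after Steps~2--3), and Step~4 can then reassemble $\tree[+]$ with positive probability by the properties of the reconnection routine of Appendix~\ref{appendix:b}.

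\emph{Case 2 ($\{\gennodea[\jtind+1]\}$ is not a node of $\tree[+]$).}  The central observation is that $\jtpotorigins{\jtnodeind}$ consists precisely of those neighbors $c$ of $\jtnewnode{\jtnodeind}$ in $\tree[+]$ satisfying $c \cap \jtnewnode{\jtnodeind} = \jtnewnode{\jtnodeind} \setminus \{\gennodea[\jtind+1]\}$; these are the cliques of $\trgr(\tree[+])$ that could have produced $\jtnewnode{\jtnodeind}$ under Step~2\(^\ast\) of the \jteabbr, while the engulfed case corresponds to $\jtpotorigins{\jtnodeind}=\emptyset$, in which $\jtnode_{\jtnodeind}:=\jtnewnode{\jtnodeind}\setminus\{\gennodea[\jtind+1]\}$ plays the same role. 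Replacing each $\jtnewnode{\jtnodeind}$ by the chosen $\jtnode_{\jtnodeind}$ directly inverts Steps~2\(^\ast\)--4\(^\ast\), and (iii) is then immediate since $\gennodea[\jtind+1]$ is purged from every node. For (ii), I would run the \jteabbr\, on the resulting $\tree$ with subtree $\tree'$ consisting of the $\jtnode_{\jtnodeind}$ with the link pattern inherited from $\tree[+]'$; choosing $\jtnodearb{\jtnodeind}$ at Step~2\(^\ast\) so that $\jtnodeseps{\jtnodeind}\cup\jtnodearb{\jtnodeind}=\jtnewnode{\jtnodeind}\setminus\{\gennodea[\jtind+1]\}$ and relocating neighbors at Step~4\(^\ast\) to match the configuration in $\tree[+]$ yields $\tree[+]$ with positive probability.

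The main obstacle is (i) in Case~2: one must confirm that the running-intersection property survives the simultaneous replacement of the nodes $\jtnewnode{\jtnodeind}$ by their origins $\jtnode_{\jtnodeind}$. Given two surviving cliques $a$ and $b$ of $\tree$, the $a$-$b$-path in $\tree$ is obtained by pulling back the corresponding path in $\tree[+]$; any vertex of $a\cap b$, which cannot equal $\gennodea[\jtind+1]$, that lies in an intermediate node $\jtnewnode{\jtnodeind}$ must also lie in $\jtnode_{\jtnodeind}$ because $\jtnode_{\jtnodeind}\supseteq \jtnewnode{\jtnodeind}\setminus\{\gennodea[\jtind+1]\}$. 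A careful bookkeeping argument, guided by Proposition~\ref{thm:uniqueness}, is needed to handle the mergers of each $\jtnewnode{\jtnodeind}$ into its origin $\jtnode_{\jtnodeind}\in\jtpotorigins{\jtnodeind}$ without duplicating nodes or introducing cycles, thereby confirming that $\tree$ is indeed a junction tree in $\trsp[\jtind]$.
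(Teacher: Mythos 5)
Your proposal is correct and follows essentially the same route as the paper: the same case split on whether $\{\gennodea[\jtind+1]\}$ is a node of $\tree[+]$, the same inversion argument for (ii) (empty subtree plus randomisation at $\emptyset$ in the first case; recovering $\tree'$, $\jtnodearb{\jtnodeind}$ and $\jtnodestomove{\jtnodeind}$ in the second), and the same key observation $\genseta\cap\gensetb\subseteq \jtnewnode{\jtnodeind}\setminus\{\gennodea[\jtind+1]\}\subseteq\jtnode_{\jtnodeind}$ driving the running-intersection property. The one step you explicitly defer --- that the simultaneous replacement of the nodes $\jtnewnode{\jtnodeind}$ by their origins yields a genuine tree --- is dispatched briefly in the paper by noting that the neighbor transfer makes $(\jtnewnode{\jtnodeind},\jtnewnode{\jtnodeindb})$ a link of $\tree[+]$ if and only if $(\jtnode_{\jtnodeind},\jtnode_{\jtnodeindb})$ is a link of $\tree$, and that the transferred neighbors $\neig[{\tree[+]}]{\jtnewnode{\jtnodeind}}\setminus\jtnode_{\jtnodeind}$ belong to distinct subtrees, so contracting each link $(\jtnode_{\jtnodeind},\jtnewnode{\jtnodeind})$ creates neither cycles nor duplicated nodes.
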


Theorem~\ref{thm:rcta} proves to be crucial in the  \smcabbr\, context described in Section~\ref{sec:smc} and in particular in the refreshment step of the particle Gibbs sampler detailed in \citep{olsson2019}.

% subsection the_jtelong (end)

\section{Counting the number of junction trees for an expanded decomposable graph}
\label{sec:update_mu}
% -*- root: opr2018.tex -*-
\citet{doi:10.1198/jcgs.2009.07129} provide an expression for counting the number of equivalent junction trees of a given decomposable graph.
In this section we derive a factorisation of the same expression which shows to alleviate the computational burden when calculated for expanded graphs.
For sake of completeness, we restate three theorems from \citep{doi:10.1198/jcgs.2009.07129}.
The first counts the number of ways a forest can be reconnected into a tree and was first established in \citep{moon1967enumerating}.
\begin{mythm}[\citet{moon1967enumerating}]
	\label{thm:num_forest}
	The number of distinct ways that a forest 
	of order 
	$\jtind$ 
	comprising $q$ subtrees of 
	orders 
	$r_1,\dots,r_q$ can be connected into a single tree by adding $q-1$ edges is
	\begin{align*}
    	\jtind^{q-2}\prod_{i=1}^qr_i.
	\end{align*}
\end{mythm}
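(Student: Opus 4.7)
The plan is to reduce the problem to a weighted enumeration of labelled trees on the components viewed as super-vertices, and then to evaluate the resulting sum via a classical weighted form of Cayley's formula. Writing the components of the forest as $C_1, \ldots, C_q$ with $|C_i| = r_i$ (so that $m = \sum_{i=1}^q r_i$), the first observation is that any way of adding $q-1$ edges to obtain a single tree is uniquely described by (i) a labelled tree $T$ on the label set $\{1, \ldots, q\}$ recording which pairs of components are joined by one of the new edges, together with (ii) for each link $\{i,j\}$ of $T$, a choice of one endpoint in $C_i$ and one in $C_j$. That the induced structure on the components is itself a tree follows because each $C_i$ is already internally connected, so any two edges joining the same pair of components would create a cycle. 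Letting $d_i(T)$ denote the degree of $i$ in $T$, the number of endpoint choices contributed by $C_i$ across its incident links equals $r_i^{d_i(T)}$, so the count to evaluate is
\begin{align*}
N = \sum_{T} \prod_{i=1}^q r_i^{d_i(T)},
\end{align*}
where the sum is over all labelled trees $T$ on $\{1, \ldots, q\}$.

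Next, I would invoke (or derive) the weighted Cayley identity
\begin{align*}
\sum_{T} \prod_{i=1}^q x_i^{d_i(T)} = \left( \prod_{i=1}^q x_i \right) \left( \sum_{i=1}^q x_i \right)^{q-2},
\end{align*}
from which the claim follows by specialising to $x_i = r_i$. The cleanest route is the Prüfer correspondence: each labelled tree on $\{1, \ldots, q\}$ maps bijectively to a code $(a_1, \ldots, a_{q-2}) \in \{1, \ldots, q\}^{q-2}$, and the degree of vertex $i$ in the tree equals one plus the number of occurrences of $i$ in its code. Consequently
\begin{align*}
\prod_{i=1}^q x_i^{d_i(T)} = \left( \prod_{i=1}^q x_i \right) \prod_{k=1}^{q-2} x_{a_k},
\end{align*}
and summing over all codes factors, by independence of the $q-2$ positions, as $\prod_i x_i$ times $\left(\sum_i x_i\right)^{q-2}$.

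The only real subtlety is the treatment of small $q$. For $q = 1$ no edges are added and the formula yields $m^{-1} \cdot m = 1$, matching the single trivial configuration; for $q = 2$ the formula reduces to $r_1 r_2$, which is manifestly the number of ways to pick one endpoint in each of $C_1, C_2$ for the single connecting edge. These boundary cases are checked directly, whereas for $q \geq 3$ everything reduces to the weighted Cayley identity above. The main obstacle, to the extent that there is one, is not the combinatorial argument itself but the bookkeeping that ensures the map from an edge-addition outcome to a pair (tree on components, endpoint choices) is a bijection; once this is in place and the Prüfer machinery is invoked, the conclusion is immediate.
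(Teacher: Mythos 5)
Your proof is correct. Note, however, that the paper itself offers no proof of this statement: it is restated verbatim from Moon (1967) ``for sake of completeness'' and used as a black box, so there is no internal argument to compare yours against. Your route is the standard one for the generalized Cayley formula: the reduction of an edge-addition outcome to a pair consisting of a labelled tree on the $q$ components together with independent endpoint choices is sound (the component graph is connected with $q-1$ edges and, as you observe, cannot carry two links between the same pair of components without creating a cycle, hence is a tree), the resulting weighted sum $\sum_{T}\prod_{i=1}^{q} r_i^{d_i(T)}$ is the right quantity, and the Pr\"ufer evaluation of $\sum_{T}\prod_i x_i^{d_i(T)} = \bigl(\prod_i x_i\bigr)\bigl(\sum_i x_i\bigr)^{q-2}$ specialised at $x_i = r_i$ with $\sum_i r_i = m$ gives exactly $m^{q-2}\prod_{i=1}^{q} r_i$. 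The boundary cases $q=1$ and $q=2$ are handled correctly (indeed the Pr\"ufer argument already covers $q=2$, where the code is empty). This is a complete, self-contained proof of a result the paper only cites.
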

For a given junction tree $\tree$, let $t_{\sep}$ denote the order of the subtree $\tree_\sep$ induced by the separator $\sep$. 
Now, let $m_\sep$ be the number of links associated with $\sep$ and let $f_1,\dots,f_{m_\sep+1}$ be the orders of the 
tree components 
in $\jtsepforest{\sep}(\tree)$.
Then, by Theorem \ref{thm:num_forest} the following is obtained.
\begin{mythm}[\citet{doi:10.1198/jcgs.2009.07129}]
\label{thm:nu_g}
	The number of ways that the components of $\jtsepforest{\sep}(\tree)$, where $\sep$ is a separator in a graph $\graph$ with junction tree $\tree$, can be connected into a single tree by adding the appropriate number of links is given by
	\begin{align*}
	    \jtsepcount_\mathsmaller{\graph}(\sep)=t_\sep^{m_\sep-1}\prod_{j=1}^{m_\sep+1}f_j.
	\end{align*}
\end{mythm}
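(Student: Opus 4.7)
The plan is a direct application of Theorem~\ref{thm:num_forest} (Moon's formula) to the forest $\jtsepforest{\sep}(\tree)$. I would begin by making explicit that the forest to be analysed is the one obtained by cutting the subtree $\tree_{\sep}$ (rather than the full tree $\tree$) at its $\sep$-associated links — this matches the description used in Section~\ref{sub:cta}, immediately before $\jtsepcount_{\mathsmaller{\trgr(\tree)}}(\sep)$ is introduced (``the tree obtained by cutting $\tree_{\sep}$ at the separator $\sep$''), and it is precisely what guarantees the identity $\sum_{j} f_{j} = t_{\sep}$. With that reading fixed, $\tree_{\sep}$ is a tree on $t_{\sep}$ nodes, and deleting its $m_{\sep}$ links produces a forest consisting of exactly $q \eqdef m_{\sep}+1$ subtrees of orders $f_{1},\dots,f_{m_{\sep}+1}$.

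The statement then falls out of Moon's formula: in order to reconnect a forest of total order $t_{\sep}$ comprising $q$ subtrees of orders $f_{1},\dots,f_{q}$ into a single tree, one must add $q-1 = m_{\sep}$ edges, and Theorem~\ref{thm:num_forest} asserts that this can be done in exactly
\[
t_{\sep}^{\,q-2}\prod_{j=1}^{q} f_{j} \;=\; t_{\sep}^{\,m_{\sep}-1}\prod_{j=1}^{m_{\sep}+1} f_{j}
\]
distinct ways, which is the asserted expression. I do not foresee any genuine technical obstacle beyond the notational clarification just highlighted; the earlier, broader description of $\jtsepforest{\sep}(\tree)$ as ``the forest obtained by deleting, in $\tree$, the links associated with $\sep$'' would, if read literally, produce a forest whose component orders sum to the total number of cliques of $\graph$ rather than to $t_{\sep}$, and hence a Moon-type formula with the wrong exponential base. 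Pinning down that components are taken inside $\tree_{\sep}$ is therefore the only real content of the proof beyond invoking Moon's theorem.
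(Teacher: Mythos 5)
Your proof is correct and is essentially the paper's own argument: the paper simply notes that the formula is obtained from Theorem~\ref{thm:num_forest} applied to the forest of $q = m_\sep+1$ components of orders $f_1,\dots,f_{m_\sep+1}$ summing to $t_\sep$. Your clarification that $\jtsepforest{\sep}(\tree)$ must be read as the result of cutting $\tree_\sep$ (not all of $\tree$) at the $\sep$-links matches the paper's own description in Section~\ref{sub:cta} and is a sensible, but not substantively different, addition.
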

\begin{mythm}[\citet{doi:10.1198/jcgs.2009.07129}]
\label{thm:mu_g}
	The number of junction trees for a decomposable graph $\graph$ is given by
	\begin{align*}
	    %\label{eq:mu_g}
	    \jtcount{\graph} = \prod\limits_{\sep \in \seps{\graph}}\jtsepcount_\mathsmaller{\graph}(\sep).
	\end{align*}
\end{mythm}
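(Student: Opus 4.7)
The plan is to prove Theorem~\ref{thm:mu_g} by exhibiting a bijection between junction trees of $\graph$ and tuples $(E_\sep)_{\sep \in \seps{\graph}}$, where each $E_\sep$ is a choice of $m_\sep$ links that reconnect a canonical forest $\jtsepforest{\sep}$ (depending only on $\graph$) into a tree. Combined with Theorem~\ref{thm:nu_g}, which counts $\jtsepcount_\graph(\sep)$ such reconnections for each $\sep$, this immediately yields $\jtcount{\graph} = \prod_{\sep \in \seps{\graph}} \jtsepcount_\graph(\sep)$.

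The first ingredient is the classical invariance result for decomposable graphs: the set of cliques of $\graph$ and the multiset of separators, together with each multiplicity $m_\sep$, are intrinsic to $\graph$, so in particular $t_\sep$ and the node set of $\tree_\sep$ are $\graph$-determined. The more delicate step is to show that the multiset of component orders $\{f_j\}_{j=1}^{m_\sep+1}$ of $\jtsepforest{\sep}(\tree)$ is also $\graph$-invariant. The idea is that two cliques $\jtnode$ and $\jtnode'$ lie in the same component of $\jtsepforest{\sep}(\tree)$ if and only if $\graph$ admits a chain of cliques joining them whose consecutive pairwise intersections are separators of $\graph$ distinct from $\sep$, a characterisation purely in terms of $\graph$. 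Verifying this equivalence proceeds by a case analysis on whether the separators appearing along candidate paths are strictly larger than, strictly smaller than, or incomparable to $\sep$, combined with the running-intersection property applied inside $\tree_\sep$.

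The forward map of the bijection sends $\tree$ to $(E_\sep(\tree))_{\sep \in \seps{\graph}}$, where $E_\sep(\tree)$ is the set of the $m_\sep$ links of $\tree$ labeled with $\sep$; by construction each $E_\sep(\tree)$ is a valid reconnection of the canonical forest $\jtsepforest{\sep}$. The inverse map takes the union of the $E_\sep$ across $\sep \in \seps{\graph}$. Since the total edge count equals $\sum_\sep m_\sep$, one less than the number of cliques of $\graph$, showing the union is a junction tree amounts to verifying acyclicity and the running-intersection property. This is the main obstacle. I plan to handle it by processing separators in order of decreasing size: once reconnections for all separators strictly larger than $\sep$ have been incorporated, the accumulated graph is a forest whose components coincide with the $f_j$-components at the $\sep$-stage, so adding the edges in $E_\sep$ — which by definition merges exactly $m_\sep + 1$ components into a tree using $m_\sep$ edges — preserves acyclicity and extends the running-intersection property inductively to the cliques containing $\sep$. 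Applying Theorem~\ref{thm:nu_g} at each level gives $\jtsepcount_\graph(\sep)$ independent choices per separator, and the product formula follows.
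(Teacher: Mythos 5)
First, a point of reference: the paper does not prove Theorem~\ref{thm:mu_g} at all --- it is restated from \citet{doi:10.1198/jcgs.2009.07129} "for sake of completeness", and no argument for it appears in Appendix~\ref{appendix:a}. So there is no in-paper proof to compare yours against; your proposal can only be judged against the original source, whose strategy (decompose a junction tree into its sets of $\sep$-labelled links and show these range independently over the $\jtsepcount_{\graph}(\sep)$ reconnections of a graph-determined forest) your sketch does broadly follow.

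There is, however, a genuine gap in your key invariance lemma. You characterise "same component of $\jtsepforest{\sep}(\tree)$" by the existence of a chain of cliques whose consecutive pairwise intersections are separators of $\graph$ distinct from $\sep$. This is false. Take $\graph$ on vertices $\{1,\dots,5\}$ with edges $(1,2),(1,3),(2,3),(2,4),(3,4),(3,5)$, so the maximal cliques are $\{1,2,3\}$, $\{2,3,4\}$, $\{3,5\}$ and the separators are $\{2,3\}$ and $\{3\}$. For $\sep=\{2,3\}$, the cliques $\{1,2,3\}$ and $\{2,3,4\}$ must be adjacent in every junction tree (their intersection $\{2,3\}$ is not contained in $\{3,5\}$), so cutting the unique $\{2,3\}$-link always places them in different components; yet the chain $\{1,2,3\},\{3,5\},\{2,3,4\}$ has both consecutive intersections equal to $\{3\}$, a separator of $\graph$ distinct from $\sep$, so your criterion declares them to be in the same component. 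The correct criterion requires the consecutive intersections to \emph{properly contain} $\sep$ (so the chain stays inside the subtree $\tree_{\sep}$ of cliques containing $\sep$); "distinct from $\sep$" is far too weak because of separators strictly contained in, or incomparable to, $\sep$. Relatedly, the count $\jtsepcount_{\graph}(\sep)=t_\sep^{m_\sep-1}\prod_j f_j$ only matches Theorem~\ref{thm:num_forest} when $\jtsepforest{\sep}$ is taken on the $t_\sep$ cliques containing $\sep$ (so that $\sum_j f_j=t_\sep$), whereas your chains range over all cliques; this ambiguity feeds the error above. Finally, the surjectivity half of the bijection --- that an arbitrary tuple of reconnections assembles into a tree satisfying the junction-tree property --- is asserted rather than proved: your induction in decreasing order of separator cardinality must also handle links labelled by separators that are larger than $\sep$ in cardinality but incomparable to it, and it is precisely there that the claim "the accumulated graph is a forest whose components coincide with the $f_j$-components at the $\sep$-stage" needs a real argument. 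As written, the proposal is a plausible outline of the Thomas--Green proof but not yet a proof.
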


In the sequential sampling context considered in this paper it is useful to exploit that any decomposable graph $\graph[+]\in \graphsp[\jtind+1]$ can be regarded as an expansion of another decomposable graph $\graph\in \graphsp[\jtind]$, in the sense that $\graph[+]$ is obtained by expanding $\graph$ with the vertex $\gennodea[\jtind+1]$.
This follows for example by induction using \citep[Corollary 2.8]{lauritzen1996}. 

% (for example by using the junction tree expander).
% In fact the following Corollary is provided in \citet[Corollary 2.8]{lauritzen1996}.
% \begin{mycor}[{\citet[Corollary 2.8]{lauritzen1996}}]
% If \(\graph\) is decomposable and \(\genseta \subseteq \graphnodeset\), then \(\graph\lbrack\genseta \rbrack\) is decomposable.
% \end{mycor}

% By a simple induction argument using the following corollary from \citet[Corollary 2.8]{lauritzen1996}, 
% any decomposable graph $\graph[+]\in \graphsp[\jtind+1]$ can be regarded as an expansion of another decomposable graph $\graph\in \graphsp[\jtind]$, in the sense that $\graph[+]$ is obtained by expanding $\graph$ with the vertex $\gennodea[\jtind+1]$.
% %any decomposable graph can be obtained by sequential addition of vertices.
% \begin{mycor}[{\citet[Corollary 2.8]{lauritzen1996}}]
% If \(\graph\) is decomposable and \(\genseta \subseteq \graphnodeset\), then \(\graph\lbrack\genseta \rbrack\) is decomposable.
% \end{mycor}

The key insight when calculating \(\mu(\graph[+])\) is that when a vertex is added to $\graph$, not all separators will necessarily be affected. 
This implies that $\jtsepcount_{\graph}(\sep) = \jtsepcount_{\graph[+]}(\sep)$ for some separators. 

\begin{mythm}
\label{thm:update_mu}
	Let $\graph[+]\in \graphsp[\jtind+1]$ be an expansion  
	of some graph $\graph\in \graphsp[\jtind]$ by the extra vertex $\gennodea[\jtind+1]$.
	Let $\sepset^\star \subseteq \seps{\graph[+]}$ be the set of unique separators created (note that \(\sepset^\star\cap \seps{\graph}\) might be non-empty) by the expansion.
	Then 
	\begin{align}
	\label{eq:mufac}
		\jtcount{\graph[+]} = \frac{\prod_{\sep \in {\newseps{\graph[+]}}}{\jtsepcount_\mathsmaller{\graph[+]}(\sep)}}{ \prod_{\sep \in {\newseps{\graph}}}\jtsepcount_\mathsmaller{\graph}(\sep) }\jtcount{\graph},
	\end{align}
	 where $\newseps{\graph} \eqdef \{\sep\in \seps{\graph} : \exists \sep^\prime \in \sepset^\star  \text{ such that } \sep\subseteq \sep^\prime\}$
	is the set of separators in $\graph$ contained in some separator in $\sepset^\star$.
	%and ${A_{+}} \eqdef \{ \sep \in \seps{\graph[+]} : \exists \sep^\prime \in \sepset^\star \text{ such that } \sep\subseteq \sep^\prime\}$ the set of separators in $\graph[+]$ contained in some separator in $\sepset^\star$.
\end{mythm}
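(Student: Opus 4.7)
The approach is to apply \Cref{thm:mu_g} to both $\jtcount{\graph[+]}$ and $\jtcount{\graph}$, expressing each as a product of $\jtsepcount$-values over its separator set, and then to argue that all factors corresponding to separators outside the affected sets $\newseps{\graph[+]}$ and $\newseps{\graph}$ cancel pairwise. Concretely, \eqref{eq:mufac} is equivalent to the identity
\begin{equation*}
\prod_{\sep\in\seps{\graph[+]}\setminus\newseps{\graph[+]}}\jtsepcount_\mathsmaller{\graph[+]}(\sep)=\prod_{\sep\in\seps{\graph}\setminus\newseps{\graph}}\jtsepcount_\mathsmaller{\graph}(\sep),
\end{equation*}
which is what I would prove.

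The main step is to establish the stronger pointwise statement that the map $\sep\mapsto\sep$ is a well-defined bijection from $\seps{\graph}\setminus\newseps{\graph}$ to $\seps{\graph[+]}\setminus\newseps{\graph[+]}$ under which $\jtsepcount_\mathsmaller{\graph}(\sep)=\jtsepcount_\mathsmaller{\graph[+]}(\sep)$. For this I would fix compatible junction trees $\tree\in\trsp[\jtind]$ of $\graph$ and $\tree[+]\in\trsp[\jtind+1]$ of $\graph[+]$ with $\tree[+]\in\supp(\prop{\jtind}(\tree,\cdot))$, whose existence is guaranteed by \Cref{thm:characterization} together with \Cref{thm:jt_preservation}, and then track the action of the \jteabbr\, described in Section~\ref{sub:cta} on the local tree structure around each separator. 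The crucial observation is that every new node of $\tree[+]$ is of the form $\jtnewnode{\jtnodeind}=\jtnodearbrest{\jtnodeind}\cup\{\newvert\}$ and hence contains $\newvert$, so every separator associated with a new link of $\tree[+]$ contains $\newvert$ and therefore lies in $\newseps{\graph[+]}$; correspondingly, the separators of $\tree$ modified by the expansion are exactly those contained in such new separators, which is the definition of $\newseps{\graph}$. Consequently, for $\sep\in\seps{\graph}\setminus\newseps{\graph}$, the subtree $\tree_\sep$ and the forest $\jtsepforest{\sep}(\tree)$ appear verbatim in $\tree[+]$, so the quantities $t_\sep$, $m_\sep$ and the component orders $f_1,\ldots,f_{m_\sep+1}$ entering \Cref{thm:nu_g} take identical values for $\tree$ and $\tree[+]$, yielding $\jtsepcount_\mathsmaller{\graph}(\sep)=\jtsepcount_\mathsmaller{\graph[+]}(\sep)$. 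The reverse inclusion and the corresponding $\jtsepcount$-equality follow symmetrically by analysing the \jtcabbr\, of Section~\ref{sub:rcta} via \Cref{thm:rcta}.

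With the bijection and the pointwise invariance of $\jtsepcount$ in hand, \eqref{eq:mufac} follows directly: split the product in \Cref{thm:mu_g} for each of $\graph$ and $\graph[+]$ according to the partition of the separator set into its affected and unaffected parts, identify the unaffected parts via the bijection, and divide by $\prod_{\sep\in\newseps{\graph}}\jtsepcount_\mathsmaller{\graph}(\sep)$ to reassemble $\jtcount{\graph}$ on the right-hand side. The main obstacle is the rigorous verification of the pointwise identification of unaffected separators and of their associated tree components; in particular one must exclude that a component of $\jtsepforest{\sep}(\tree)$ could be silently merged or split by the expansion, which is ultimately a consequence of the explicit description of the \jteabbr\, in Section~\ref{sub:cta} and of Step~4\(^\ast\) only moving links whose associated separators are already contained in some element of $\sepset^\star$.
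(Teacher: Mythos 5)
Your overall reduction is the same as the paper's: both arguments cancel the factors $\jtsepcount_\mathsmaller{\graph}(\sep)$ for separators outside $\newseps{\graph}$ against those outside $\newseps{\graph[+]}$, so everything hinges on showing that these two sets coincide and that $\jtsepcount$ is unchanged on them. Your route to that fact, however, rests on a false statement and otherwise defers the real work. The claim that every separator associated with a new link of $\tree[+]$ contains $\newvert$ is wrong: the connecting links $(\jtnode[\jtnodeind], \jtnewnode{\jtnodeind})$ introduced at Step~3\(^\ast\) carry the separator $\jtnode[\jtnodeind]\cap\jtnewnode{\jtnodeind}=\jtnodearbrest{\jtnodeind}$, which does not contain $\newvert$; the links relocated at Step~4\(^\ast\) keep their old, $\newvert$-free separators; and in the empty-subtree branch every new link carries the separator $\emptyset$. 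These $\newvert$-free new separators are precisely the ones that make the theorem nontrivial: attaching $\newvert$ to a proper subset $\jtnodearbrest{1}$ of a clique creates the separator $\jtnodearbrest{1}$, which may coincide with an old separator whose $\jtsepcount$-value nevertheless changes (its subtree $\tree_\sep$ gains the node $\jtnewnode{1}$). Consequently the assertion that the separators of $\tree$ modified by the expansion are \emph{exactly} those contained in some element of $\sepset^\star$ does not follow from your observation; it is the entire content of the theorem, and you yourself flag its verification as ``the main obstacle'' without carrying it out.

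For comparison, the paper closes this gap without reference to the expander at all. Working with arbitrary junction-tree representations, it first proves $\seps{\graph}\setminus\newseps{\graph} = \seps{\graph[+]}\setminus\newseps{\graph[+]}$ by two elementary inclusions, and then shows $\tree_\sep = (\tree[+])_\sep$ for each unaffected $\sep$ by noting that this subtree can change only if a separator containing $\sep$ is created (impossible, since such a separator would lie in $\sepset^\star$ and witness $\sep\in\newseps{\graph}$) or removed (impossible, since a separator $\sep'\supseteq\sep$ can only disappear if $\sep'\cup\{\newvert\}$ is created, again placing $\sep$ in $\newseps{\graph}$). If you want to keep your generative argument via compatible trees $\tree$ and $\tree[+]$ with $\tree[+]\in\supp(\prop{\jtind}(\tree,\cdot))$ --- which is legitimate but costs you Theorems~\ref{thm:jt_preservation}, \ref{thm:characterization} and \ref{thm:rcta} as prerequisites --- you must replace the false observation by a case analysis of every link type the \jteabbr\ creates or moves (replicated links inside $\tree[+]'$, connecting links $(\jtnode[\jtnodeind],\jtnewnode{\jtnodeind})$, relocated links, and the randomisation at $\emptyset$), checking in each case that only separators belonging to $\newseps{\graph}$ can have their induced subtree or the components of $\jtsepforest{\sep}(\tree)$ altered.
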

The potential computational gain obtained by using the factorisation in Theorem~\ref{thm:update_mu} 
%for calculation of the number of equivalent junction trees of an expanded graph 
is illustrated by the following example.

\begin{myex}
\label{ex:updatemu}
Let $\graph[+]\in \graphsp[\jtind+1]$ be an expansion of a graph $\graph\in \graphsp[\jtind]$ in the sense that $\gennodea[\jtind+1]$ is connected to every vertex in one of the cliques in $\graph$.
Then, since the set of separators is the same in the two graphs, it holds that $\jtcount{\graph[+]}=\jtcount{\graph}.$
\end{myex}

\section{Applications to sequential Monte Carlo sampling}
\label{sec:smc}
We demonstrate how the \jteabbr\, and the \jtcabbr\, can be cast into the framework of \smcabbr\, methods (see  \cite{chopin:papaspiliopoulos:2020} for a recent introduction) for sampling from a sequence $(\targ{m})_{m \in \nsetpos}$ of probability distributions, where each $\targ{m}$ is a distribution on \(\trsp[m]\). For every $m$ we assume that $\targ{m}$ is known only up to a normalising constant, i.e., $\targ{m} \propto \untarg{m}$, where $\untarg{m}$ is a tractable, unnormalised function. Following \citep{10.2307/3879283}, we introduce path spaces \(\trsp[1:\jtind]\eqdef\trsp[1]\times \dots \times \trsp[\jtind] \) and
let 
\begin{align} \label{eq:norm_embeded}
\targpath{m}(\tree_{1:m}) &
\eqdef \prod_{\ell=1}^{m-1}\frac{\targ{\ell+1}(\tree_{\ell+1})}{\targ{\ell}(\tree_{\ell})} \bk{\ell}(\tree_{\ell+1},\tree_{\ell}), \quad \tree_{1:\jtind} \in \trsp[1:\jtind],
\end{align}
be extended target distributions. Importantly, each target \(\targ{m}\) is the marginal of \(\targpath{m}\) with respect to the \(m\)th component. In many applications, the aim is to sample from a given distribution $\pi$ on some junction-tree space $\trsp[n]$ induced by $n$ vertices, and in this case one may let $\targ{n} = \pi$ and $(\targ{\ell})_{m = 1}^{n - 1}$ be the marginals of $\pi$ (if these are known up to normalising constants), serving to guide the distribution flow towards the target $\pi$. 

Now, introduce, for all $m$, proposal distributions 
\begin{align} \label{eq:proppath}
\bar \rho_\jtind(\tree_{1:\jtind})& 
\eqdef 
\prod_{\ell=1}^{\jtind-1}\prop{\ell}(\tree_\ell, \tree_{\ell+1}), \quad \tree_{1:\jtind} \in \trsp[1:\jtind].  
\end{align}
Since Theorem~\ref{thm:rcta} implies that \(\supp(\bk{\ell}(\cdot,\tree_{\ell})) \subseteq \supp (\prop{\ell}(\tree_\ell, \cdot))\) for all \(\ell \in \{1,\dots, \jtind - 1\}\), it is readily checked that \( \supp(\targpath{m}) \subseteq \supp(\bar \rho_\jtind)\).
This property, along with Theorems~\ref{thm:jt_preservation} and \ref{thm:characterization}, allows the extended target distributions \eqref{eq:norm_embeded} to be sampled by means of an importance-sampling procedure, where independent tree paths $\treepart{1:m}{i} = (\treepart{1}{i},\ldots,\treepart{m}{i})$, $i \in \{1, \ldots, N\}$, generated sequentially using the \jteabbr, are assigned importance weights 
\begin{equation} \label{eq:SIS:weights}
\wgt{m}{i} \eqdef \prod_{\ell = 1}^{m - 1} \frac{ \untarg{\ell+1}(\treepart{\ell+1}{i}) \bk{\ell}(\treepart{\ell + 1}{i},\treepart{\ell}{i})}{ \untarg{\ell}(\treepart{\ell}{i})\prop{\ell}(\treepart{\ell}{i} , \treepart{\ell+1}{i})}
             \frac{  }{ } \propto \frac{\targpath{m}(\treepart{1:m}{i})}{\bar \rho_\jtind(\treepart{1:m}{i})}. %\wgt{\ell}{i}
\end{equation}
Here $N$ is the Monte Carlo sample size. Thanks to the Markovian structure of the proposal \eqref{eq:proppath} and the multiplicative structure of the weights \eqref{eq:SIS:weights}, this procedure can be implemented sequentially by applying recursively the update described in Algorithm~\ref{alg:SIS:update}. This yields a sequence $(\treepart{m}{i}, \wgt{m}{i})_{i = 1}^N$, $m \in \nsetpos$, of weighted samples,   where, since $\targ{m}$ is the marginal of $\targpath{m}$ with respect to the last component, $\sum_{i = 1}^N \wgt{m}{i} h(\treepart{m}{i}) / \wgtsum{m}$, with $\wgtsum{m} \eqdef \sum_{i = 1}^N \wgt{m}{i}$, is a strongly consistent self-normalised estimator of the expectation $\targ{m}(h) \eqdef \sum_{T \in \trsp[m]} h(T) \targ{m}(T)$ of any real-valued test function $h$ under $\targ{m}$. In the SMC literature, the draws $(\treepart{m}{i})_{i = 1}^N$ are typically referred to as \emph{particles}. 

\begin{algorithm}[ht!]
    \KwIn{$(\treepart{m}{i}, \wgt{m}{i})_{i = 1}^\N$}
    \KwOut{$(\treepart{m+1}{i}, \wgt{m+1}{i})_{i = 1}^\N$}
         \For{$i \gets 1, \ldots, \N$}{
             draw \(\treepart{m + 1}{i} \sim  \prop{m}(\treepart{m}{i}, \cdot)\)\; 
              set $\displaystyle \wgt{m + 1}{i} \gets
             \frac{ \untarg{m+1}(\treepart{m+1}{i}) \bk{m}(\treepart{m + 1}{i},\treepart{m}{i})}{ \untarg{m}(\treepart{m}{i})\prop{m}(\treepart{m}{i}, \treepart{m+1}{i})}
             \frac{  }{ } \wgt{m}{i}$\;
             %\label{alg:smc:weigthupdt}
         }
     \Return{$(\treepart{m+1}{i}, \wgt{m+1}{i})_{i = 1}^\N$}
     \vspace{0.6cm}
     \caption{Sequential importance sampling.}
     \label{alg:SIS:update}
\end{algorithm}

Even though this \emph{sequential importance sampling} procedure, which is described in Algorithm~\ref{alg:SIS:update}, appears appealing at a first sight, the multiplicative weight updating formula on Line~3 is problematic in the sense that it will, inevitably, lead to severe weight skewness and, consequently, high Monte Carlo variance. In fact, it can be shown that updating the weights in this naive manner leads to a Monte Carlo variance that increases geometrically fast with $m$; see e.g. \cite[Chapter~7.3]{cappe:moulines:ryden:2005} for a discussion. Needless to say, this is impractical for most applications

In order to cope with the weight-degeneracy problem, \citet{gordon1993novel} proposed furnishing the previous sequential importance sampling algorithm with a \emph{selection step}, in which the particles are resampled, with replacement, in proportion to their importance weights. Upon selection, all particles are assigned the unit weight, and  the particles and importance weights are then updated as in Algorithm~\ref{alg:SIS:update}. Such selection is a key ingredient in SMC methods, and it can be shown mathematically that the resulting \emph{sequential importance sampling with resampling} algorithm, which is given in Algorithm~\ref{alg:SMC:update}, is indeed numerically stable \cite{delmoral:guionnet:2001,delmoral:2004}.  
\begin{algorithm}[ht!]
    \KwIn{$(\treepart{m}{i}, \wgt{m}{i})_{i = 1}^\N$}
    \KwOut{$(\treepart{m+1}{i}, \wgt{m+1}{i})_{i = 1}^\N$}
         select $(\treeparttd{m}{i})_{i = 1}^N$ among $(\treepart{m}{i})_{i = 1}^N$ in proportion to  $(\wgt{m}{i})_{i = 1}^N$\; 
         \For{$i \gets 1, \ldots, \N$}{
             draw \(\treepart{m + 1}{i} \sim  \prop{m}(\treeparttd{m}{i}, \cdot)\)\;\label{smc:drawtree}
             set $\displaystyle \wgt{m + 1}{i} \gets
             \frac{ \untarg{m+1}(\treepart{m+1}{i}) \bk{m}(\treepart{m + 1}{i},\treeparttd{m}{i})}{ \untarg{m}(\treeparttd{m}{i})\prop{m}(\treeparttd{m}{i}, \treepart{m+1}{i})}$\;
             %\label{smc:updateweight}
             \label{alg:smc:weigthupdt}
         }
     \Return{$(\treepart{m+1}{i}, \wgt{m+1}{i})_{i = 1}^\N$}
     \vspace{0.6cm}
     \caption{Sequential importance sampling with resampling.}
     \label{alg:SMC:update}
\end{algorithm}

In standard self-normalised importance sampling, the average weight provides an unbiased estimator of the normalising constant of the target. However, when the particles are resampled systematically, as in Algorithm~\ref{alg:SMC:update}, this simple estimator is no longer valid. Instead, it is possible to show that for every $m$, the estimator 
$$
\gamma^N_m(h) \eqdef \frac{1}{\N^\jtind}  \left( \prod_{\ell = 1}^{\jtind-1} \wgtsum{\ell} \right)
 \sum_{i = 1}^\N \wgt{\jtind}{i} h(\treepart{\jtind}{i})
$$
is an unbiased estimator of $\gamma_m(h)$ for any real-valued test function $h$. In particular, 
\begin{equation} \label{eq:constest}
\untarg{m}^N(\1_{\trsp[m]}) = \frac{1}{\N^\jtind} \prod_{\ell = 1}^{\jtind} \wgtsum{\ell} 
\end{equation}
provides an unbiased estimator of the normalising constant $\untarg{m}(\1_{\trsp[m]})$ of $\targ{m}$. This estimator will be illustrated in the next section. 

\section{Numerical study}
\label{sec:simulations}
We demonstrate two applications of Algorithm~\ref{alg:SMC:update} for estimating the cardinalities \(|\graphsp[\jtind]|\) and \(|\trsp[\jtind]|\) of the spaces of decomposable graphs and junction trees, respectively.

\subsection{Estimating  $|\graphsp[\jtind]|$ } \label{sec:estdec}
%There is an exact expression of $|\graphsp[\jtind]|$ presented in \citep{countchordal}, which has been calculated for $\jtind \le 13$.
%Another exact algorithm which calculates for \(\jtind\le10\) is proposed in \citep{kawahara2018enumerating}.
%An  asymptotic expression for  \(|\graphsp[\jtind]|\)  is  derived in \citep{countchordal} as \(\sum_{l=1}^{\jtind}{\jtind \choose l} 2^{l(\jtind-l)}\).

\citet{countchordal} provides an exact expression for $|\graphsp[\jtind]|$ and evaluates the same for $\jtind \le 13$. In the same reference, the author also establishes the asymptotic expression \(|\graphsp[\jtind]| \sim \sum_{\ell=1}^{\jtind}{\jtind \choose \ell} 2^{\ell(\jtind-\ell)}\).
Another exact algorithm that calculates $|\graphsp[\jtind]|$ for \(\jtind\le10\) is proposed in \citep{kawahara2018enumerating}.

% Even though the restriction to decomposable graphs is a serious reduction of the graph space, the space is still immense. 

%In this example we provide unbiased estimates of $|\graphsp[\p]|$ and compare them to the space of all undirected graphs with the same number of vertices using \smcabbr. 

%and by considering the SMC scheme for approximating $\graphtarg =\unif[{\graphsp[\jtind]}]$.
In this study we will use  Algorithm~\ref{alg:SMC:update} for estimating $|\graphsp[m]|$, $m \in \nsetpos$, on the basis of the target probability distributions
$$
\targ{m}(T_m) \propto \untarg{m}(T_m) = \frac{1}{\mu(\trgr(\tree_m))} \1_{\trsp[m]}(T_m). 
$$
Note that the normalising constant $\untarg{m}(\1_{\trsp[m]})$ of $\targ{m}$ equals $|\graphsp[m]|$.; indeed, 
\begin{multline*}
\untarg{m}(\1_{\trsp[m]}) = \sum_{G_m \in \graphsp[m]} \sum_{T_m : \trgr(T_m) = G_m} \frac{1}{\mu(\trgr(\tree_m))} \1_{\trsp[m]}(T_m) \\
= \sum_{G_m \in \graphsp[m]} \frac{1}{\mu(G_m)} \sum_{T_m : \trgr(T_m) = G_m} \1_{\trsp[m]}(T_m) = |\graphsp[m]|.  
\end{multline*}

%begin{align*}
%\targ{m}(\tree_{m}) \eqdef %\frac{1}{|\graphsp[m]|} \times %\frac{1}{\mu(\trgr(\tree_m))}, %\propto %\frac{1}{\mu(\trgr(\tree_m))}.
%\end{align*} \jimmy{We need to define $\gamma_m$ and argue that the normalising constant is indeed $|\graphsp[m]|$.}
%where \(\tree_{\jtind}\in \trsp[\jtind]\).
With this formulation, unbiased estimates 
of $|\graphsp[m]|$, $m \in \nsetpos$, can be obtained directly using \eqref{eq:constest}. Note that in this setting Line~4 of Algorithm~\ref{alg:SMC:update} reduces to
\begin{align}
    \frac{ \untarg{m+1}(\tree_{\jtind+1})}{\untarg{\jtind}(\tree_{m})} = \frac{\mu(\trgr(\tree_{m}))}{\mu(\trgr(\tree_{m+1}))},
\end{align}
where \(\tree_{\jtind}\in \trsp[\jtind]\), \(\tree_{\jtind+1}\in \trsp[\jtind+1]\),
for which, as demonstrated by Example~\ref{ex:updatemu}, the computational burden can be substantially reduced using the factorisation \eqref{eq:mufac} since  \(\tree_{\jtind+1}\in \supp (\prop{\jtind}(\tree_\jtind, \cdot))\).

Table~\ref{tab:numpropdec} shows means and standard errors based on $10$ estimates \(\widehat{|\graphsp[\jtind]|}\) of $|\graphsp[m]|$ for $\jtind \in \nn[15]$.
The upper panel of the table shows  $\widehat{|\graphsp[\jtind]|}$ while the lower panel shows $\widehat{|\graphsp[\jtind]|}/2^{{\jtind(\jtind-1)/ 2}}$, i.e. estimates of the fraction of undirected graphs that are decomposable.
For $\jtind\le 13$ the exact enumerations are given in the second column. 
%To obtain the estimate with minimal empirical variance, we ran the  \smcabbr\, sampler on the grid \((\subtreecont,\subtreenonempty) \in [0.3,0.5,0.8]^2\) of tuning parameters and found the optimal combination  $\subtreecont=0.5$ and $\subtreenonempty=0.5$. \jimmy{Optimal according to which criterion?  minimal empirical variance/F.}
We ran the  \smcabbr\, sampler with tuning parameters  $\subtreecont=0.5$, $\subtreenonempty=0.5$ and the number of particles was set to $\N=10000$.  %\jimmy{Optimal according to which criterion?  minimal empirical variance/F.}
Figure~\ref{fig:asymtpotics} displays the asymptotic behavior of $|\graphsp[\jtind]|$ and \( \widehat{|\graphsp[\jtind]|}\) for $\jtind\le50$, along with the exact values for $\jtind \leq 13$, justifying a concordance with the exact results.
Each of the 10 estimates took about 10 minutes to calculate.
%Note that the latter two are almost indistinguishable.
\begin{table}[ht!]
\centering
\begin{tabular}{lllll}
$\p $ &\(|\graphsp[\jtind]|\)  &\(\widehat{|\graphsp[\jtind]|}\) & std. err. \\
\hline

4       &       61        &       60.4        &       0.40& \\

5       &       822        &       815        &       16.2 & \\

6       &       1.82        &       1.78        &       0.37 &$\times  10^4 $\\

7       &       6.18        &       5.92        &       0.15 & $\times 10^5$\\

8       &       3.09        &       3.00        &       0.13 & $\times 10^7$\\

9       &       2.19        &       2.14        &       0.09 & $\times 10^9$\\

10      &       2.15        &       2.11        &       0.10 &$\times  10^{11}$\\

11      &       2.88        &       2.80        &       0.18 &$\times  10^{13}$\\

12      &       5.17        &       5.15        &       0.51 & $\times 10^{15}$\\

13      &       1.23        &       1.21        &       0.17 & $\times 10^{18}$\\

14      &       -        &       3.74        &          0.62 &$\times  10^{20}$ \\

15      &       -        &       1.53        &          0.30 & $\times 10^{23}$\\

&&&\\
$\p $ & $|\graphsp[\p]|/2^{{\p(\p-1)/ 2}}$  &$\widehat{|\graphsp[\p]|}/2^{{\p(\p-1)/ 2}}$ & std. err. & \\
\hline

4       &       9.53        &       9.44        &       0.06 & $\times 10^{-1}$\\

5       &       8.03        &       7.96        &       0.16 & $\times 10^{-1}$\\

6       &       5.54        &       5.43        &       0.11 & $\times 10^{-1}$\\

7       &       2.95        &       2.82        &       0.07 & $\times 10^{-1}$\\

8       &       1.15        &       1.12        &       0.05 & $\times 10^{-1}$\\

9       &       3.19        &       3.11        &       0.13 & $\times 10^{-2}$\\

10      &       6.12        &       5.99        &       0.28 & $\times 10^{-3}$\\

11      &       7.99        &       7.78        &       0.51 & $\times 10^{-4}$\\

12      &       7.00        &       6.98        &       0.70 & $\times 10^{-5}$\\

13      &       4.09        &       3.99        &       0.55 & $\times 10^{-6}$\\

14      &       -        &       1.51        &       0.25 & $\times 10^{-7}$\\

15      &       -        &       3.77        &       0.74 & $\times 10^{-9}$\\

 \end{tabular}
\vspace{0.6cm}
%\caption{SMC estimation of the number and the fraction of graphs that are decomposable for $\p\le15$. 

%The number of particles were set to $N=10000$ and the CTA parameters where set to $\subtreecont=0.5, \subtreenonempty=0.8$. 
%The estimated means and standard errors were calculated from $T=50$ independent SMC runs.}
\caption{Sequential Monte Carlo estimation of the number decomposable graphs and the fraction of graphs which are decomposable.}
\label{tab:numpropdec}
\end{table}

\subsection{Estimating  $|\trsp[\jtind]|$ }

%In this example, we estimate \(|\trsp[m]|\) using Algorithm~\ref{alg:SMC:update}.
As far as we know there is no method available in the literature for efficiently calculating  \(|\trsp[\jtind]|\eqdef\sum_{\graph\in \graphsp[\jtind]}\mu(\graph)\).
However, for \(\jtind \le 5\) it is computationally tractable to first find all the 822 graphs \(\graphsp[\jtind]\) by Monte Carlo sampling and then evaluate \(\mu\) for each of them.%  the exact number may be counted by the sum of for each graph \(\graph \in \graphsp[m]\), where the graphs \( \graphsp[\jtind] \) 
%may be explored by Monte Carlo sampling. 
%This approach was however only computationally tractable for \(\jtind \le 5\).
%However, exact numbers may be counted for \(\jtind \le 5\) by the sum of \(\mu(\graph)\) for each graph \(\graph \in \graphsp[m]\).
%Again, since \(\graphsp[m]\) grows very fast, this method was only tractable for \(\jtind \le 5\).

As in Section~\ref{sec:estdec} we find an unbiased estimator of \(|\trsp[\jtind]|\) by constructing target distributions
$$
\targ{m}(\tree_{m}) \propto \untarg{m}(\tree_m) = \1_{\trsp[m]}(\tree_m), 
$$
to that the normalising constant $\untarg{m}(\1_{\trsp[m]})$ equals $|\trsp[m]|$, 
and then use \eqref{eq:constest}.
%Thus an unbiased estimator of \(|\trsp[\jtind]|\) is provided by \eqref{eq:constest}.
Note that with this setting the first factor in Line~4 in Algorithm~\ref{alg:SMC:update} simplifies as
\begin{align}
    \frac{ \untarg{m+1}(\tree_{\jtind+1})}{\untarg{\jtind}(\tree_{m})} = 1, %\1_{\trsp[\jtind+1]}(\tree[\jtind+1]).
\end{align}
for all \(\tree_{\jtind}\in \trsp[\jtind]\), \(\tree_{\jtind+1}\in \supp (\prop{\jtind}(\tree_\jtind, \cdot))\).

The third and fourth columns of the upper panel in Table~\ref{tab:numtrees} show estimated means and standard deviations of \(\widehat{|\trsp[\jtind]|}\) for \(\jtind\le 15 \) based on 10 replicates. 
The true values for are shown in the first column for \(\jtind\le 5\).

The lower panel of Table~\ref{tab:numtrees} displays estimates of the number of junction trees per decomposable graph, \(|\trsp[\p]|/|\graphsp[\p]|\), for different numbers of vertices. True numbers as are shown in the first column, and estimated means and standard deviations of  \(\widehat{|\trsp[\p]|}/\widehat{|\graphsp[\p]|}\) are shown in the third and fourth columns. 
Interestingly, Figure~\ref{fig:treespergraph} indicates an exponential growth rate of the estimated junction trees per decomposable graph for \(\p\le 50\).
Each of the 10 estimates took about 6 minutes to compute.
\begin{table}[ht!]
\centering
\begin{tabular}{lllll}
$\p $ &\(|\trsp[\p]|\)  &\(\widehat{|\trsp[\p]|}\) & std. err. \\
\hline

1       &       1        &       1        &       0.00        &       \\

2       &       2        &       2        &       0.00        &       \\

3       &       10        &       10        &       0.03        &      \\

4       &       108        &       106        &       1.41        &      \\

5       &       2.09        &       2.03        &       0.03        &       $\times 10^{3}$ \\

6       &       -        &       6.10        &       0.10        &       $\times 10^{4}$ \\

7       &       -        &       2.76        &       0.08        &       $\times 10^{6}$ \\

8       &       -        &       1.79        &       0.07        &       $\times 10^{8}$ \\

9       &       -        &       1.63        &       0.11        &       $\times 10^{10}$ \\

10      &       -        &       2.00        &       0.18        &       $\times 10^{12}$ \\

11      &       -        &       3.37        &       0.32        &       $\times 10^{14}$ \\

12      &       -        &       7.45        &       0.88        &       $\times 10^{16}$ \\

13      &       -        &       2.12        &       0.30        &       $\times 10^{19}$ \\

14      &       -        &       7.87        &       1.34        &       $\times 10^{21}$ \\

15      &       -        &       3.88        &       0.80        &       $\times 10^{24}$ \\
&&&\\
$\p $ & $|\trsp[\p]|/|\graphsp[\p]|$  &$\widehat{|\trsp[\p]|}/ \widehat{|\graphsp[\p]|}$ & std. err.  \\
\hline

1       &       1        &       1        &       0.00 \\

2       &       1        &       1        &       0.00 \\

3       &       1.25        &       1.25&       0.00 \\

4       &       1.77        &       1.76        &       0.03 \\

5       &       2.54        &       2.49        &       0.07 \\

6       &       -        &       3.43        &       0.1 \\

7       &       -        &       4.66        &       0.23 \\

8       &       -        &       5.99        &       0.45 \\

9       &       -        &       7.67        &       0.64 \\

10      &       -        &       9.51        &       0.90 \\

11      &       -        &       12.1        &       1.35 \\

12      &       -        &       14.6        &       1.98 \\

13      &       -        &       17.9        &       3.11 \\

14      &       -        &       21.5        &       4.23 \\

15      &       -        &       26.1        &       6.16 \\

 \end{tabular}
\vspace{0.6cm}
%\caption{SMC estimation of the number and the fraction of graphs that are decomposable for $\p\le15$. 

%The number of particles were set to $N=10000$ and the CTA parameters where set to $\subtreecont=0.5, \subtreenonempty=0.8$. 
%The estimated means and standard errors were calculated from $T=50$ independent SMC runs.}
\caption{Sequential Monte Carlo estimation of the number junction trees and the expected number of junction trees per decomposable graph.}
\label{tab:numtrees}
\end{table}

\begin{figure}[!ht]%[htbp]
    \centering
    \includegraphics[width=0.6\textwidth]{{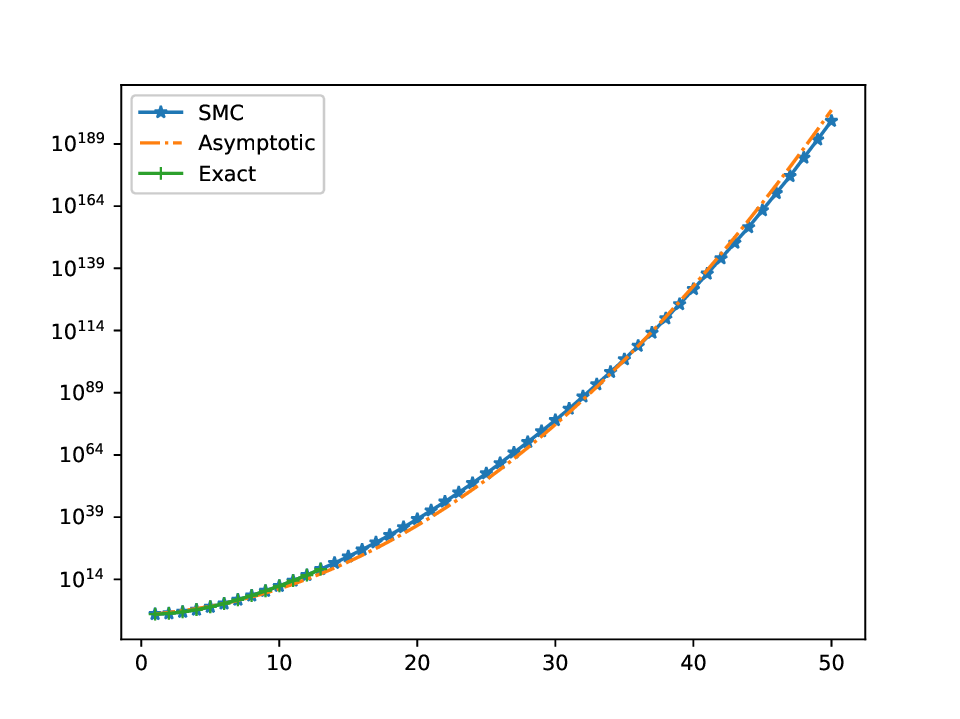}}
    \caption{The number of decomposable graphs as a function of the number of vertices.}
    \label{fig:asymtpotics}
\end{figure}

\begin{figure}[!ht]%[htbp]
    \centering
    \includegraphics[width=0.6\textwidth]{{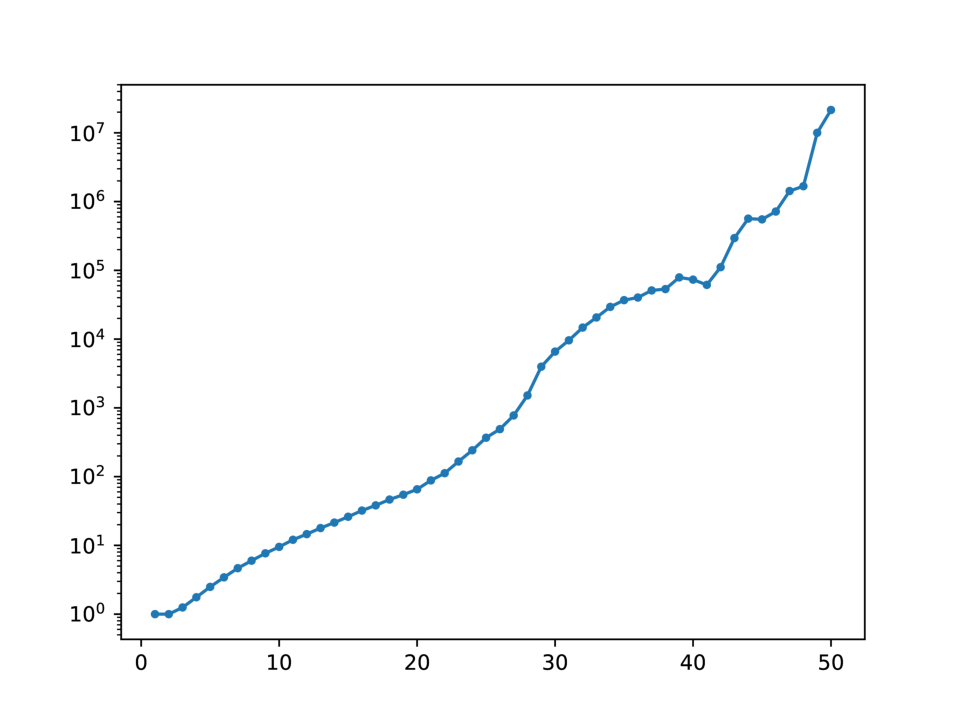}}
    \caption{Estimates of the expected number of junction trees per decomposable graph.}
    \label{fig:treespergraph}
\end{figure}

% \subsection{Estimating the clique number} % (fold)
% \label{sub:estimating_the_clique_number}

% In this example, we estimate the \emph{clique number} (number of nodes in the largest clique) distribution induced by $\unif[{\graphsp[\p]}]$.
% Using the same parameter settings, Figure~\ref{fig:cliquenumber_histogram} displays 10 SMC estimates of the distribution where \(\p=15\).
% The black bars at the top of each staple indicate the standard error of the corresponding estimate.
% Notably, from a prior specification perspective, most of the probability mass is concentrated around the values 5, 6, and 7, meaning that the uniform prior in this case gives strong preference for these clique numbers. 

% \begin{figure}[htbp]
%     \centering
%     \includegraphics[width=0.6\textwidth]{}
%     \caption{Empirical clique number distributions averaged over $10$ SMC estimates.}
%     \label{fig:cliquenumber_histogram}
% \end{figure}

\section{Discussion}
\label{sec:discussion}
% -*- root: main.tex -*-

Several \mcmcabbr\, methods for approximating distributions on the space of decomposable graphs have been proposed in the literature.
Most of them suffer from mixing problem inherited by the local moves that are made to the graphs (or junction trees) in the Markov chain.
In this paper we have presented the \jteabbr\, and the \jtcabbr\, for stochastically generating and collapsing junction trees for decomposable graphs in a vertex-by-vertex fashion.
The Markovian nature of both procedures together enable for essentially different sampling techniques, namely importance sampling, and particle Gibbs; see \citep{olsson2019}.
The main benefit of these sampling strategies is a substantial improvement of the mixing properties of the Markov chain; this improvement is possible due to (exploiting) global moves on the space of decomposable graphs.
The price paid for the junction tree representation when analysing the decomposable graph space is the imposed computational burden for calculating the number of junction trees for each of the sampled graphs.
This burden has been alleviated by the factorization property derived in Theorem \ref{thm:update_mu}, allowing for a faster dynamic updates.

Using importance sampling techniques, we were able to provide unbiased estimates of the number of decomposable graphs  and junction trees with given number of vertices \(\p\), as it appears as normalising constant in distributions specifically designed for this purpose.

As an alternative approach to the \jteabbr\, which incrementally construct a junction tree by adding one vertex at a time to the underlying graph, one may suggest a method which operates directly on the space of decomposable graphs.
The main difficulty arising when designing such a scheme is to express the transition probabilities in a tractable form while maintaining the ability to generate the whole space of decomposable graphs.

%The speed of sampling from the \jteabbr\, is of course implementation specific.
%However, w
We expect that tailored data structures for the junction tree implementation which respect the sequential nature of the algorithms could greatly increase the speed.
For example, when propagating the particles in the  \smcabbr\, algorithm, the junction trees are not altered but rather copied and then expanded.
The reason for this is that several tree must be able to stem from the same ancestor.
%We expect that persistent, hierarchical data structure concepts borrowed from functional programming might be useful here.
% section discussion (end)

\section{Acknowledgements}
\label{sec:acknowledgements}
The research of T. Pavlenko was partially supported by the Swedish Research Council,  Grant C0595201. 
J. Olsson gratefully acknowledges support by the Swedish Research Council, Grant 2018-05230.
We are also thankful to Jim Holmstr\"om for sharing his
Python knowledge with us.

\appendix
% -*- root: opr2018.tex -*-
\section{}
\label{appendix:a}
% \numberwithin{equation}{section}
\setcounter{equation}{0}

\subsection{Stochastic breath-first tree traversal}
Algorithm~\ref{alg:sbfs} provides the detailed steps in the stochastic \bfftabbr\, algorithm outlined in Section~\ref{sub:random_sampling_of_subtrees}.
\begin{algorithm}
\KwIn{A tree $\tree=(\graphnodeset,\graphedgeset)$, parameters $(\subtreecont, \subtreenonempty) \in (0,1)^2$}
%\KwOut{$\tree' \sim \subtreeker{\subtreecont,\subtreenonempty}(\tree, \cdot)$}
\KwOut{$\tree' $ subtree of $\tree$}
    draw $u\sim \mathsf{Bernoulli}(\subtreenonempty)$\;
    \eIf{$u=0$}{
        \Return{$(\emptyset, \emptyset)$}
    }{
        draw $\gennodea \sim \unif[\graphnodeset]$\;
        $q\vcentcolon =[]$; \tcp{empty queue of vertices to visit}
        $q.enqueue(\gennodea)$\;
        $v\vcentcolon =[]$; \tcp{empty list of visited nodes}
        \While{$q\ne \emptyset$}{
            $\gennodea\vcentcolon =q.dequeue()$\;
            $v.append(\gennodea)$\;
            \For{$\gennodeb\in \neig{\gennodea} \setminus v$}{
                    draw $u\sim \mathsf{Bernoulli}(\alpha)$\; 
                    \If{$u=1$}{
                        $q.enqueue(\gennodeb)$\;
                    }
            }
        }
        let $\tree' = \tree \lbrack v \rbrack$\;
        \Return{$\tree'$}\;
    }
    \caption{}
\caption{Stochastic breadth-first tree traversal.}
\label{alg:sbfs}
\end{algorithm}

\subsection{Junction-tree expander (detailed steps)}
Below follows a more detailed description of the \jteabbr. 
The full algorithm is given in Algorithm~\ref{alg:cta}.
% Main algorithm
\begin{algorithm}
\caption{Junction-tree expander.}
\label{alg:cta}
\KwIn{$\tree \in \trsp[\jtind]$}
%\KwOut{$\tree[+] \sim \prop{\jtind}^{\subtreecont,\subtreenonempty}(\tree, \cdot)$}
\KwOut{$\tree[+] \in \trsp[\jtind+1]$}
    \tcp{\(Step~1.\)}
    draw $\tree^\prime = (\graphnodeset^\prime, \graphedgeset^\prime) \sim \subtreeker{\subtreecont,\subtreenonempty}(\tree, \cdot)$\; \label{alg:cta:draw_subtree} 
    let $\tree[+]$ be a copy of $\tree$\; \label{alg:cta:copy_T}
    \eIf{$\tree' = (\emptyset, \emptyset)$}{
        \tcp{\(Step~2.\)}
        add a link from $\{\gennodea[\jtind+1]\}$ to one of the nodes in $\tree[+]$\;  \label{alg:cta:connect_isolated}
        %\tcc{\(Step~3~and~4.\)}
        %randomise $\tree[+]$ at the separator $\emptyset$\;  \label{alg:cta:randomize_at_emptyset}
        \tcp{\(Step~3.\)}
        cut $\tree[+]$ at the separator $\emptyset$\;  
        \tcp{\(Step~4.\)}
        randomly connect \(\tree[+]\) into a forest\;
        \label{alg:cta:randomize_at_emptyset}
        \Return{$\tree[+]$}\;
    }
    {
    \tcp{\(Step~2^*.\)}
        enumerate the nodes in $\tree'$ by $\jtnode_1, \dots, \jtnode_\jtsubtreeorder$\; \label{alg:cta:enumerate_cliques}
        \For{$\jtnodeind=1 \to \jtsubtreeorder$ \label{alg:cta:create_nodes_loop}}{
            set $\jtnodeseps{\jtnodeind} \gets \bigcup\limits_{\jtnode\in \neig[\tree']{\jtnode_{\jtnodeind}}} \jtsep{\jtnode_\jtnodeind}{ \jtnode}$\; \label{alg:cta:label_sepset}
            \eIf{\mbox{there exists some}
                $\jtnode \in \neig[\tree']{\jtnode_{\jtnodeind}} \mbox{ such that } \jtnodeseps{\jtnodeind} = \jtsep{\jtnode_\jtnodeind}{\jtnode}$
            }{                
                draw $\jtnodearb{\jtnodeind} \sim \unif[{\potset(\jtnode_{\jtnodeind}\setminus \jtnodeseps{\jtnodeind}) \setminus \{\emptyset\}}]$\; \label{alg:cta:draw_arb}
            }{
                draw $\jtnodearb{\jtnodeind} \sim \unif[{\potset(\jtnode_{\jtnodeind}\setminus \jtnodeseps{\jtnodeind})}]$\; \label{alg:cta:draw_arb_nonempty}
            }
            set $ \jtnodearbrest{\jtnodeind} \gets \jtnodeseps{\jtnodeind} \cup \jtnodearb{\jtnodeind}$ and $\jtnewnode{\jtnodeind} \gets \jtnodearbrest{\jtnodeind} \cup \{\gennodea[\jtind+1]\}$\; \label{alg:cta:label_cliques}
            add $\jtnewnode{\jtnodeind}$ to $\tree[+]$\; \label{alg:cta:add_new_clique}
            \If {$\jtnodearbrest{\jtnodeind} = \jtnode_{\jtnodeind}$}{
                remove $\jtnode_{\jtnodeind}$ and its incident links from $\tree[+]$\; \label{alg:cta:swallowed_remove_neigs}
            }
        }
            \tcp{\(Step~3^*.\)}
        \ForEach{link $(\jtnode_{\jtnodeind}, \jtnode_{k})$ in $\tree'$ \label{alg:cta:isomorph_tree_loop}}{
            remove $(\jtnode_{\jtnodeind}, \jtnode_{k})$ from $\tree[+]$; \tcp{if not removed on Line \ref{alg:cta:swallowed_remove_neigs}} \label{alg:cta:remove_edge}
            add $(\jtnewnode{\jtnodeind}, \jtnewnode{k})$ to $\tree[+]$\; \label{alg:cta:add_edge}
        }

        \For{$\jtnodeind = 1 \to \jtsubtreeorder$}{
            \If {$\jtnodearbrest{\jtnodeind} = \jtnode_{\jtnodeind}$}{
                let $\neig[\tree]{\jtnode_{\jtnodeind}} \setminus \graphnodeset'$ be neighbors of  $\jtnewnode{\jtnodeind}$ in $\tree[+]$\;
                \label{alg:cta:transfer_edges_if_swallowed}
            }
        }
        \For{$\jtnodeind = 1 \to \jtsubtreeorder$ \label{alg:cta:connect_subtree_loop}}{
            \If {$\jtnewnode{\jtnodeind}$ \text{and} $\jtnode_{\jtnodeind}$ \text{are nodes in} $\tree[+]$}{
                add the link $(\jtnode_{\jtnodeind},\jtnewnode{\jtnodeind})$ to $\tree[+]$\; \label{alg:cta:connect_subtrees}
            }
        }
            \tcp{\(Step~4^*.\)}
        \For{$\jtnodeind=1 \to \jtsubtreeorder$ \label{alg:cta:for_random_neigs}}{
            \If{$\jtnodearbrest{\jtnodeind} \ne \jtnode_{\jtnodeind}$}{
                draw $\jtnodestomove{\jtnodeind} \sim \unif[{\potset(\{\jtnode \in \neig[{\tree[+]}]{\jtnode_{\jtnodeind}} : \sep_{ \jtnode, \jtnode_{\jtnodeind}} \subseteq \jtnodearbrest{\jtnodeind} \})}]$\; \label{alg:cta:draw_random_neighbors}
                move the neighbors $\jtnodestomove{\jtnodeind}$ of $\jtnode[\jtnodeind]$ to be neighbors of $\jtnewnode{\jtnodeind}$ instead\; \label{alg:cta:move_random_neighbors}
            }
        }
        \Return{$\tree[+]$}
    }
\end{algorithm}
\subsubsection*{Step~1: Subtree simulation}

In this step, a random subtree $\tree' = (V', E')$ of $\tree$ is sampled from $\subtreeker{\subtreecont,\subtreenonempty}(\tree, \cdot)$ (Line~\ref{alg:cta:draw_subtree}). After this, a new tree $\tree[+]$ is initiated as a copy of $\tree$ (Line~\ref{alg:cta:copy_T}), and all the manipulations described below refers to $\tree[+]$. Depending on whether $\tree'$ is empty or not, the algorithm proceeds in two substantially different ways.

\subsubsection*{Step~2: Node creation}

If $\tree'$ is empty, the new vertex $\gennodea[\jtind+1]$ is added as a node $\{\gennodea[\jtind+1]\}$ in its own and connected to one arbitrary existing node.
\subsubsection*{Step~3 and 4: Randomising the tree}

The tree is then cut at each link associated with the empty separator and reconstructed, a process we call \emph{randomisation at the separator} $\emptyset$ (Lines~\ref{alg:cta:connect_isolated}--\ref{alg:cta:randomize_at_emptyset}); see \autoref{appendix:b} or \citep{doi:10.1198/jcgs.2009.07129} for details. 
The randomisation step might seem superfluous at a first glance; however, it turns out to be needed in order to ensure that every junction tree has, as stated in Theorem \ref{thm:characterization}, a positive probability of being produced by iterative application of the algorithm.

\subsubsection*{Step~2\(^\ast\): Node creation}
If $\tree'$ is nonempty, the idea is to replicate its structure so that at the end of the algorithm, a subtree $\tree[+]'$ of $\tree[+]$ has been created where every node contains $\gennodea[\jtind+1]$. More specifically, for each node $\jtnode_\jtnodeind$, $\jtnodeind \in \{1,\dots,\jtsubtreeorder\}$, in $\tree'$, a new node $\jtnewnode{\jtnodeind}$ is created by connecting $\gennodea[\jtind+1]$ to a subset of $\jtnode_\jtnodeind$ while ensuring that the decomposability of $\trgr(\tree[+])$ is still maintained. If $\tree'$ has more than one node, it is, for each $j$, in order to avoid that a 4-cycle is formed in $\trgr(\tree[+])$, necessary to connect $\gennodea[\jtind+1]$ to all vertices in $\jtnodeseps{\jtnodeind} \eqdef \bigcup_{\jtnode\in \neig[\tree']{\jtnode_{\jtnodeind}} } 
\jtsep{\jtnode_\jtnodeind}{ \jtnode}$. For the rest of the vertices in $\jtnode_{\jtnodeind}$, a subset $\jtnodearb{\jtnodeind}$ is sampled uniformly at random, and $\jtnewnode{\jtnodeind}$ is formed as the union of $\jtnodearb{\jtnodeind}$, $\jtnodeseps{\jtnodeind}$, and $\{\gennodea[\jtind+1]\}$ (Lines~\ref{alg:cta:label_sepset}--\ref{alg:cta:label_cliques}). In the case where $\jtnodeseps{\jtnodeind}$ is identical to one of the separators $\jtsep{\jtnode_\jtnodeind}{ \jtnode}$, $\jtnode\in \neig[\tree']{\jtnode_{\jtnodeind}}$, it is necessary that $\jtnodearb{\jtnodeind}$ is nonempty in order to prevent the new node from being engulfed by some of its neighbors in $\tree[+]'$ (Line~\ref{alg:cta:draw_arb_nonempty}).
In the case where $\gennodea[\jtind+1]$ is connected to every vertex in $\jtnode_{\jtnodeind}$, $\jtnode_{\jtnodeind}$ is replaced by $\jtnewnode{\jtnodeind}$ (Lines~\ref{alg:cta:add_new_clique}--\ref{alg:cta:swallowed_remove_neigs}). 

\subsubsection*{Step~3\(^\ast\): Structure replication}

Having created the new nodes $\left( \jtnewnode{\jtnodeind} \right)_{j = 1}^\jtsubtreeorder$, links will be added between $\jtnewnode{\jtnodeind}$ and $\jtnewnode{k}$ whenever there is a link between $\jtnode_{\jtnodeind}$ and $\jtnode_{k}$ in $\tree'$ (Line~\ref{alg:cta:add_edge}). In this case, the link between $\jtnode_{\jtnodeind}$ and $\jtnode_{k}$ is removed (Line~\ref{alg:cta:remove_edge}) in order to avoid a 4-cycle to be formed on Line~\ref{alg:cta:connect_subtrees}. 
By this measure, $\tree[+]'$ replicates the structure of $\tree'$.
In order to connect $\tree[+]$ into a tree, links are added between each pair of nodes $\jtnewnode{\jtnodeind}$ and $\jtnode_{\jtnodeind}$ (Line~\ref{alg:cta:connect_subtrees}). 

\subsubsection*{Step~4\(^\ast\): Neighbor relocation}

Finally, we observe that for all $\jtnodeind \in \{1,\dots,\jtsubtreeorder\}$, any potential neighbor $\jtnode \in \neig[{\tree[+]}]{\jtnode_{\jtnodeind}}$ such that $\sep_{\jtnode, \jtnode_{\jtnodeind}} \subseteq \jtnodearbrest{\jtnodeind}$ 
can be moved to be a neighbor of $\jtnewnode{\jtnodeind}$ instead while maintaining the junction tree property (Lines~\ref{alg:cta:draw_random_neighbors}--\ref{alg:cta:move_random_neighbors}). 
In the special case where the node $\jtnode_{\jtnodeind}$ is substituted by $\jtnewnode{\jtnodeind}$, all the neighbors of $\jtnode_{\jtnodeind}$ will simply be neighbors of $\jtnewnode{\jtnodeind}$ instead (Line~\ref{alg:cta:transfer_edges_if_swallowed}).

\subsection{Junction-tree collapser (detailed steps)}
Below follows some more detailed description of the \jtcabbr. 
The full algorithm is given in Algorithm~\ref{alg:rcta}.
\begin{algorithm}[!ht]
     \KwIn{$\tree[+]=(\graphnodeset[+],\graphedgeset[+])\in \trsp[\jtind+1]$}
     \KwOut{$\tree \sim \bk{\jtind}(\tree[+], \cdot)$}
     let $\tree$ be a copy of $\tree[+]$\;
     \eIf{$\{\gennodea[\jtind+1]\} \in \graphnodeset[+]$ \label{alg:rcta:if_isloated}}{
        remove $\{\gennodea[\jtind+1]\}$ and its incident edges from $\tree$\;   \label{alg:rcta:remove_node}
        connect $\tree$ into a tree using Algorithm \ref{alg:jt_shuffling} (Appendix \ref{appendix:b})\; \label{alg:rcta:connect_forest}
     }{
     let $\tree[+]'$ be the subtree of nodes in $\tree[+]$ containing $\gennodea[\jtind+1]$\; 
     enumerate the nodes in $\tree[+]'$ by $\jtnewnode{1},\dots, \jtnewnode{|\graphnodeset[+]'|}$\;
        \For{$\jtnodeind \rightarrow 1,\dots,|\graphnodeset[+]'|$}{
         let $\jtpotorigins{\jtnodeind} \gets \{\jtnode \in \neig[{\tree[+]}]{\jtnewnode{\jtnodeind}}  : \sep_{\jtnode,\jtnewnode{\jtnodeind}} = \jtnewnode{\jtnodeind} \setminus \{\gennodea[\jtind+1]\} \}$\; \label{alg:rcta:set_potorigins}
            \eIf{$\jtpotorigins{\jtnodeind}=\emptyset$}{
                $\jtnode[\jtnodeind] \gets \jtnewnode{\jtnodeind}\setminus\{\gennodea[\jtind+1]\}$\; \label{alg:rcta:swallowed_origin}
            }{
                draw $\jtnode[\jtnodeind] \sim \unif[\jtpotorigins{\jtnodeind}]$\; \label{alg:rcta:random_origin}
            }
            let $\neig[{\tree[+]}]{\jtnewnode{\jtnodeind}}\setminus \jtnode[\jtnodeind]$ be neighbors of $\jtnode[\jtnodeind]$ in $\tree$\; \label{alg:rcta:set_neighbors}
            remove $\jtnewnode{\jtnodeind}$ and its incident links from $\tree$\; \label{alg:rcta:remove_old_node}
        }
     }
     \Return{T}\;
     \caption{Junction-tree collapser.}
     \label{alg:rcta}
\end{algorithm}

Similarly to the \jteabbr, the \jtcabbr\, takes two different forms depending on whether ${\{\gennodea[\jtind+1]\}}$ is present as a node in $\tree[+]$ or not.
Specifically, if $\{\gennodea[\jtind+1]\}\in\graphnodeset[+]$, then $\{\gennodea[\jtind+1]\}$ is removed from $\tree[+]$ and the resulting forest is reconnected uniformly at random (Lines~\ref{alg:rcta:if_isloated}--\ref{alg:rcta:connect_forest} in Algorithm~\ref{alg:rcta}).

Otherwise, if $\{\gennodea[\jtind+1]\}\notin\graphnodeset[+]$ we denote by  $\{\jtnewnode{\jtnodeind}\}_{\jtnodeind=1}^{|\graphnodeset[+]'|} $ the nodes in the subtree $\tree[+]'$ induced by the nodes containing the vertex $\gennodea[\jtind+1]$. The aim is now to identify the nodes that can serve as a subtree in Algorithm~\ref{alg:cta} to produce $\tree[+]$. 
Since each node in the subtree sampled initially in Algorithm $\ref{alg:cta}$ will give rise to a new node, it is enough to determine, for each $j \in \{1,\dots,\jtsubtreeorder\}$, the node $\jtnode[\jtnodeind]$ that can be used for producing $\jtnewnode{\jtnodeind}$ (reversing Lines~\ref{alg:cta:create_nodes_loop}--\ref{alg:cta:swallowed_remove_neigs} in Algorithm~\ref{alg:cta}).
For each $\jtnodeind$, we define a set of candidate nodes $\jtpotorigins{\jtnodeind}=\{\jtnode \in \graphnodeset[+]: \jtnewnode{\jtnodeind}\cap\jtnode=\jtnodearbrest{\jtnodeind}\}$.
If $\jtpotorigins{\jtnodeind}=\emptyset$,
we let $\jtnode[\jtnodeind]=\jtnewnode{\jtnodeind}\setminus \{\gennodea[\jtind+1]\}$ (Line~\ref{alg:rcta:swallowed_origin} in Algorithm~\ref{alg:rcta}).
Otherwise, $\jtnode[\jtnodeind]$ is drawn at random from the uniform distribution over $\jtpotorigins{\jtnodeind}$ (Line \ref{alg:rcta:random_origin}).
In either case, the edges incident to $\jtnewnode{\jtnodeind}$ are moved to $\jtnode[\jtnodeind]$ (Line~\ref{alg:rcta:set_neighbors}).
\subsection{Proofs and lemmas}
\begin{mylem}

    \label{lem:sep_cond}
    Let $\tree$ be a tree where each node is a subset of some finite set.
    Then $\tree$ satisfies the junction tree property if and only if for any path $\jtnode_{1} \sim  \jtnode_{\pathorder} =( \jtnode_{1},\dots,  \jtnode_{\pathorder})$ in $\tree$ it holds that
    \begin{align*}
        \jtnode_{1}\cap \jtnode_{\pathorder} \subset \bigcap\limits_{\jtnodeind=2}^\pathorder \jtsep{\jtnode_{{\jtnodeind-1}}}{\jtnode_{{\jtnodeind}}}.
    \end{align*}

\end{mylem}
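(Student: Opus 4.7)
\noindent
The plan is to prove each direction of the equivalence separately, exploiting the fact that every separator $\jtsep{\jtnode_{j-1}}{\jtnode_j}$ is by definition the intersection $\jtnode_{j-1} \cap \jtnode_j$, hence contained in both of its endpoints. This is what makes the statement essentially a reformulation of Definition~\ref{def:jt_prop}.

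For the forward direction, I would assume $\tree$ is a junction tree and fix an arbitrary path $\jtnode_1, \ldots, \jtnode_\pathorder$. By the junction tree property applied to the pair $(\jtnode_1, \jtnode_\pathorder)$, the intersection $\jtnode_1 \cap \jtnode_\pathorder$ is contained in each intermediate node $\jtnode_j$, and in particular in $\jtnode_{j-1}$ and $\jtnode_j$ simultaneously for every $j \in \{2, \ldots, \pathorder\}$. Therefore $\jtnode_1 \cap \jtnode_\pathorder \subseteq \jtnode_{j-1} \cap \jtnode_j = \jtsep{\jtnode_{j-1}}{\jtnode_j}$ for each such $j$, and intersecting over $j$ yields the claimed containment.

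For the reverse direction, I would take two arbitrary nodes $\jtnode$ and $\jtnode'$ of $\tree$ and write the unique path between them as $\jtnode_1, \ldots, \jtnode_\pathorder$ with $\jtnode_1 = \jtnode$ and $\jtnode_\pathorder = \jtnode'$. For any intermediate $\jtnodeind \in \{2, \ldots, \pathorder - 1\}$, the assumption gives $\jtnode_1 \cap \jtnode_\pathorder \subseteq \jtsep{\jtnode_{\jtnodeind-1}}{\jtnode_\jtnodeind} \subseteq \jtnode_\jtnodeind$, and for the endpoints the inclusion $\jtnode_1 \cap \jtnode_\pathorder \subseteq \jtnode_1$ and $\jtnode_1 \cap \jtnode_\pathorder \subseteq \jtnode_\pathorder$ is trivial. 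This is exactly the junction tree property.

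There is no real obstacle here; both directions reduce to the elementary observation that a separator is contained in each of its two endpoint nodes. The only mild subtlety is ensuring that the trivial cases $\pathorder = 1$ (where the path is a single node and the intersection on the right is empty, so by convention equals the ambient set or is vacuously satisfied) and $\pathorder = 2$ (where the intersection on the right reduces to a single separator) are handled correctly, but both collapse to trivial containments. I would state the argument concisely without separating these boundary cases.
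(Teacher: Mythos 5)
Your proof is correct and rests on exactly the same observation as the paper's: since $\jtsep{\jtnode_{\jtnodeind-1}}{\jtnode_{\jtnodeind}} = \jtnode_{\jtnodeind-1}\cap\jtnode_{\jtnodeind}$, the intersection of the separators along a path equals the intersection of all nodes on that path, so the two containments are equivalent. The paper simply compresses your two directions into a single chain of set identities, but the substance is identical.
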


\begin{proof}

    The statement of the lemma follows by noting that
    \begin{align*}
        \bigcap\limits_{\jtnodeind=1}^\pathorder \jtnode_{\jtnodeind} =  \bigcap\limits_{\jtnodeind=2}^\pathorder (\jtnode_{{\jtnodeind-1}}\cap \jtnode_{\jtnodeind}) = \bigcap\limits_{\jtnodeind=2}^\pathorder \jtsep{\jtnode_{{\jtnodeind-1}} }{\jtnode_{{\jtnodeind}}},
    \end{align*}
    which implies that
   \begin{align*}
        \jtnode_{1}\cap \jtnode_{\pathorder} \subset \bigcap\limits_{\jtnodeind=1}^\pathorder \jtnode_{\jtnodeind} \iff  \jtnode_{1}\cap \jtnode_{\pathorder} \subset \bigcap\limits_{\jtnodeind=2}^\pathorder \jtsep{{\jtnode_{{\jtnodeind-1}}}}{\jtnode_{{\jtnodeind}}}.
    \end{align*}

\end{proof}

\begin{proof}[Proof of Theorem \ref{thm:jt_preservation}]

    We prove this theorem by taking a generative perspective in the sense that we rely on the sampling procedure of $\prop{\jtind}(\tree,\cdot)$ given by Algorithm \ref{alg:cta}.
    We also adopt the same notation as in Algorithm \ref{alg:cta}.

    In order to prove \ref{itm:cta:jt_preservation_prop} we assume that $\tree[+]$ is generated by Algorithm \ref{alg:cta} with input $\tree$ and show that $\tree[+]\in \trsp[\jtind+1]$ by going through the algorithm in a step-by-step fashion.
    At Line~\ref{alg:cta:draw_subtree} a subtree $\tree'$ is drawn.
    We treat the cases $\tree'=(\emptyset, \emptyset)$ and $\tree'\ne(\emptyset, \emptyset)$ separately.

    First, assume that $\tree'=(\emptyset,\emptyset)$. 
    Since the node $\{\gennodea[\jtind+1]\}$ does not intersect any other node in $\tree$, it can be connected to an arbitrary node  with separator $\emptyset$ without violating the junction tree property (Line~\ref{alg:cta:connect_isolated}). 
    In addition, \citet{Thomas:2009aa} show that randomising a tree at a given separator preserves the junction tree property (Line~\ref{alg:cta:randomize_at_emptyset}).

    For $\tree'\ne(\emptyset,\emptyset)$, we first show that $\tree[+]$ produced on Lines~\ref{alg:cta:enumerate_cliques}--\ref{alg:cta:connect_subtrees} is a tree that satisfies the junction tree property.
    Indeed, $\tree[+]$ is a tree since the subtrees produced up to Line~\ref{alg:cta:transfer_edges_if_swallowed} are all reconnected through the same tree $\tree[+][\{\jtnewnode{\jtnodeind}\}_{\jtnodeind=1}^\jtsubtreeorder]$ by the operations on Lines~\ref{alg:cta:connect_subtree_loop}--\ref{alg:cta:connect_subtrees}.
    To ensure the junction tree property of $\tree[+]$, consider a general path 
    \begin{equation*}
        \label{eq:path_in_tpp}
        (\genseta_{1},\dots,\genseta_{\pathordera}, \jtnode_{\subseqind_1} ,\jtnewnode{\subseqind_1},\dots ,\jtnewnode{\subseqind_\pathorderb} , \jtnode_{\subseqind_\pathorderb}, \gensetb_1,\dots, \gensetb_{\pathorderc})
    \end{equation*}
    passing through $\tree[+]'$ in $\tree[+]$, where
     $(\genseta_\jtnodeind)_{\jtnodeind=1}^{\pathordera}$ and $
     (\gensetb_\jtnodeind)_{\jtnodeind=1}^{\pathorderb}$
    are nonempty sequences of nodes which also belong to $\tree$. 

    The fact that %$(\jtnewnode{\subseqind_1},\dots ,\jtnewnode{\subseqind_\pathorderb})$
    $(\jtnewnode{\subseqind_\jtnodeind})_{\jtnodeind=1}^{\pathorderb}$
    is the $\jtnewnode{\subseqind_1}$-$\jtnewnode{\subseqind_\pathorderb}$ path in $\tree[+]$ implies that 
     $(\jtnode_{\subseqind_\jtnodeind})_{\jtnodeind=1}^{\pathorderb}$
    %$(\jtnode_{\subseqind_1},\dots ,\jtnode_{\subseqind_\pathorderb})$ 
    is the  $\jtnode_{\subseqind_1}$-$\jtnode_{\subseqind_\pathorderb}$ path in $\tree$, since, by construction (Lines~\ref{alg:cta:isomorph_tree_loop}--\ref{alg:cta:add_edge}),
    $(\jtnode_\jtnodeind,\jtnode_k) \in \graphedgeset^\prime$ if and only if $(\jtnewnode{\jtnodeind},\jtnewnode{k}) \in \graphedgeset[+]$.
    Thus,
    \begin{align*}
        (\genseta_{1},\dots,\genseta_{\pathordera},\jtnode_{\subseqind_1},\dots ,\jtnode_{\subseqind_\pathorderb},\gensetb_1,\dots, \gensetb_{\pathorderc})
    \end{align*}
    is the $\genseta$-$\gensetb$ path in $\tree$.
    The junction tree property of $\tree$ ensures that $\genseta \cap \gensetb \subset I_{\genseta\sim \gensetb}$, where
    \begin{equation} \label{eq:abintersection}
        I_{\genseta\sim \gensetb} \eqdef \left( \bigcap_{\jtnodeind=1}^{\pathordera}\genseta_{\jtnodeind}\right) \cap \left( \bigcap_{\jtnodeind=1}^\pathorderb \jtnode_{\subseqind_\jtnodeind} \right) \cap \left( \bigcap_{\jtnodeind=1}^{\pathorderc}\gensetb_\jtnodeind \right).
    \end{equation}
    Now, consider the intersection 
    \begin{align*}
        I^+_{\genseta\sim \gensetb} \eqdef \left( \bigcap_{\jtnodeind=1}^{\pathordera}\genseta_{\jtnodeind} \right) \cap \jtnode_{\subseqind_1} \cap \left( \bigcap_{\jtnodeind=1}^\pathorderb \jtnewnode{\subseqind_\jtnodeind} \right) \cap \jtnode_{\subseqind_\pathorderb} \cap \left( \bigcap_{\jtnodeind=1}^{\pathorderc}\gensetb_\jtnodeind \right)
    \end{align*}
    of the nodes in $\genseta\sim \gensetb$ in $\tree[+]$.
    For $\pathorderb=1$, it holds that $\jtnewnode{\subseqind_1} = \jtnode_{\subseqind_1} \cup \{\gennodea[\jtind+1]\}$, corresponding to the case where $\jtnode_{\subseqind_1}$ is engulfed into $\jtnewnode{\subseqind_1}$. 
    For $\pathorderb \ge 2$,
         the junction tree property in $\tree$ ensures via Lemma~\ref{lem:sep_cond} that 
    \begin{align*}
        \bigcap_{\jtnodeind=1}^\pathorderb \jtnewnode{\subseqind_\jtnodeind} = & \bigcap_{\jtnodeind=2}^\pathorderb \jtsep{ \jtnewnode{\subseqind_{\jtnodeind-1} }}{\jtnewnode{\subseqind_\jtnodeind} }
        =  \left (\bigcap_{\jtnodeind=2}^\pathorderb \jtsep{\jtnode_{\subseqind_{\jtnodeind-1}}}{\jtnode_{\subseqind_\jtnodeind}}\right ) \cup \{\gennodea[\jtind+1]\}
        =  \left (\bigcap_{\jtnodeind=1}^\pathorderb \jtnode_{\subseqind_\jtnodeind}\right) \cup \{\gennodea[\jtind+1]\}.
    \end{align*}
    It hence holds that $\genseta\cap \gensetb \subset I_{\genseta\sim \gensetb} \subset I^+_{\genseta\sim \gensetb}$.

    Now, consider the final version of $\tree[+]$ obtained after the relocation step on Lines~\ref{alg:cta:for_random_neigs}--\ref{alg:cta:move_random_neighbors}. 
    Let $\jtnode\in \jtnodestomove{\jtnodeind}$ and let $\tree_{\jtsep{\jtnode_{\jtnodeind}}{\jtnode}}$ be the subtree of $\tree[+]$ induced by the nodes containing the separator $\jtsep{\jtnode_{\jtnodeind}}{\jtnode}$.
    In addition to $\jtnode$ and $\jtnode_{\jtnodeind}$, it is clear that $\jtnewnode{\jtnodeind}$ is also a node in $\tree_{\jtsep{\jtnode_{\jtnodeind}}{\jtnode}}$ since $\jtsep{\jtnode_{\jtnodeind}}{\jtnode} \subseteq \jtsep{\jtnewnode{\jtnodeind}}{\jtnode}$.
    Now the fact that $\jtsep{\jtnode_{\jtnodeind}}{\jtnode} \subseteq \jtsep{\jtnewnode{\jtnodeind}}{\jtnode}$ also implies that the tree obtained by letting $\jtnode$ be a neighbor of $\jtnewnode{\jtnodeind}$ instead of $\jtnode_{\jtnodeind}$ also satisfies the junction tree property by \citet{doi:10.1198/jcgs.2009.07129}.

    Finally, \ref{itm:cta:jt_ind_subgraph_prop} follows directly since the only new vertex added to $\trgr(\tree)$ in order to get $\trgr(\tree[+])$ is $\gennodea[\jtind+1]$ and no edges have been removed between the vertices
    \(\{\gennodea[\jtnodeind]\}_{\jtnodeind=1}^\jtind\).
%    $1,\dots,\jtind$.

\end{proof}

\begin{proof}[Proof of Theorem~\ref{thm:characterization}]
 
    In this proof we use the property \ref{itm:rcta:valid_cta_expansion} of $\bk{\jtind}$ provided by Theorem~\ref{thm:rcta} and proved independently below.
 
    The space containing the trivial junction tree is $\trsp[1]=\{(\{\gennodea[1]\},\emptyset)\}$.
    We proceed by induction over the number of vertices.
    For the base case $\jtind = 2$, $\trsp[2]=\{\tree_1, \tree_2\}$, where $\tree_1=(\{\{\gennodea[1],\gennodea[2]\}\},\emptyset)$ is the unique tree constructed from $(\{\{\gennodea[1]\}\}, \emptyset)$ via the subtree $(\{\{\gennodea[1]\}\}, \emptyset)$ and $\tree_2=(\{\{\gennodea[1]\},\{\gennodea[2]\}\}, \{(\{\gennodea[1]\},\{\gennodea[2]\})\} )$ is the unique tree constructed from $(\{\{\gennodea[1]\}\}, \emptyset)$ via the subtree $(\emptyset,\emptyset)$.

    For $m \geq 3$, assume inductively that $\supp(\propmarg{\jtind-1}(\cdot)) = \trsp[\jtind-1]$ and let $\tree \in \trsp[\jtind]$ be an arbitrary junction tree. 
    It suffice to show that there exists a junction tree $\tree[-]\in \trsp[\jtind-1]$ such that $\prop{\jtind-1}(\tree[-], \tree)>0$ since then $\propmarg{\jtind}(\tree) \ge \propmarg{\jtind-1}(\tree[-])\prop{\jtind-1}(\tree[-], \tree) >0$. 
    But this follows directly by drawing any $\tree[-]\sim \bk{\jtind-1}(\tree,\cdot)$ since $\supp(\bk{m-1}(\tree, \cdot)) \subseteq \supp(\prop{m-1}(\cdot,\tree))$ (by \ref{itm:rcta:valid_cta_expansion} in Theorem~\ref{thm:rcta}), meaning that $\tree\in \supp(\prop{m-1}(\tree[-],\cdot))$.
    Thus, every junction tree in $\trsp[m]$ can be constructed, and we conclude the proof by induction. 

\end{proof}

\begin{proof}[Proof of Proposition~\ref{thm:uniqueness}]

    In this proof, we take a generative perspective in the sense that we rely on the sampling procedure given by Algorithm \ref{alg:cta} and regard $\tree[+]$ as an expansion of $\tree$.
    We further adopt the same notation as in Algorithm \ref{alg:cta} when possible.

    Let %$\graphnodeset[+]'=\{\jtnewnode{1},\dots,\jtnewnode{|\graphnodeset[+]^\prime|}\}$ 
    $\graphnodeset[+]'=\{\jtnewnode{\jtnodeind}\}_{\jtnodeind=1}^{|\graphnodeset[+]^\prime|}$ 
    be the set of nodes in $\tree[+]$ containing the vertex $\gennodea[\jtind+1]$. 
    The induced subgraph $\tree[+]\lbrack\graphnodeset[+]'\rbrack$ will necessarily be a subtree of $\tree[+]$ (see e.g. \citet{ChordalGraphs}), which we denote by $\tree[+]^\prime=(\graphnodeset[+]^\prime,\graphedgeset[+]^\prime)$.
    For each $\jtnodeind\in \{1,\dots,|\graphedgeset[+]^\prime|\}$, we define a set $\jtpotorigins{\jtnodeind}=\{\jtnode \in \graphnodeset[+]: \jtnewnode{\jtnodeind}\cap\jtnode=\jtnodearbrest{\jtnodeind}\}$, which we interpret as the candidate nodes in $\graphnodeset$ from which each $\jtnewnode{\jtnodeind}=\jtnodearbrest{\jtnodeind} \cup \{\gennodea[\jtind+1]\} \in \graphnodeset[+]$ could potentially have emerged.
    We distinguish between two main situations for $\mathsf{\tree}(\tree, \tree[+])$ depending on $|\graphnodeset[+]^\prime|$.

    For $|\graphnodeset[+]^\prime|=1$,
    \begin{itemize}
        \setlength\itemsep{0.5em}
        \item 
            If $|\jtpotorigins{1}| = 0$, then $\jtnode_{1}=\jtnewnode{1}\setminus \{\gennodea[\jtind+1]\}$ so that $\mathsf{\tree}(\tree, \tree[+])= \{ (\{\jtnode_{1}\}, \emptyset)\}$ due to Lines~\ref{alg:cta:add_new_clique}--\ref{alg:cta:swallowed_remove_neigs}. 
        \item 
            If $|\jtpotorigins{1}| = 1$, then clearly $\mathsf{\tree}(\tree, \tree[+])=\{(\{\jtnode_1\},\emptyset)\}$, where $\{\jtnode_1\} = \jtpotorigins{1}$
            due to Lines~\ref{alg:cta:connect_subtree_loop}--\ref{alg:cta:connect_subtrees}. 
        \item 
            If $|\jtpotorigins{1}| \ge 2$, consider the enumeration \(\jtpotorigins{1}=\{\tilde\jtnode_{\jtnodeind}\}_{\jtnodeind=1}^{|\jtpotorigins{1}|}\). % denote the nodes in $\jtpotorigins{1}$ by $\tilde\jtnode_{1},\tilde\jtnode_2,\dots ,\tilde\jtnode_{|\jtpotorigins{1}|}$. 
            The set $\mathsf{\tree}(\tree, \tree[+])$ is clearly non-empty, thus we can assume that $(\{\tilde\jtnode_{1}\},\emptyset)\in\mathsf{\tree}(\tree, \tree[+])$.
            Note that, since $\jtpotorigins{1}$ consists of more than one element, all nodes in
            \(\{\tilde\jtnode_{\jtnodeind}\}_{\jtnodeind=2}^{|\jtpotorigins{1}|}\)
            %$\{\tilde\jtnode_2,\dots ,\tilde\jtnode_{|\jtpotorigins{1}|}\}$
            are former neighbors of $\tilde\jtnode_1$ by Lines~\ref{alg:cta:for_random_neigs}--\ref{alg:cta:move_random_neighbors}.
            Thus, every node in
             \(\{\tilde\jtnode_{\jtnodeind}\}_{\jtnodeind=2}^{|\jtpotorigins{1}|}\)
            %$\{\tilde\jtnode_2,\dots, \tilde\jtnode_{|\jtpotorigins{1}|}\}$
            are neighbors of $\tilde\jtnode_1$ in $\tree$.
            This implies that for $|\jtpotorigins{1}|=2$, the subtree could also be  $(\{\tilde\jtnode_2\},\emptyset)$ since the $\tilde\jtnode_1$ could be moved at Lines~\ref{alg:cta:for_random_neigs}--\ref{alg:cta:move_random_neighbors}. Thus $\mathsf{\tree}(\tree, \tree[+]) = \{(\{\tilde\jtnode_1\}, \emptyset), (\{\tilde\jtnode_2\}, \emptyset)\}$.
            For $|\jtpotorigins{1}| \ge 3$, $(\{\tilde\jtnode_1\}, \emptyset)$ is necessarily the unique subtree in $\mathsf{\tree}(\tree, \tree[+])$ since if there would exist another subtree $(\{\tilde\jtnode_2\}, \emptyset)$, both $\tilde\jtnode_1$ and $\tilde\jtnode_2$ would have $\tilde\jtnode_3$ as neighbor in $\tree$, which would form a cycle.
    \end{itemize}

    For $|\graphnodeset[+]^\prime|\ge 2$, by construction, for each link  $(\jtnewnode{\jtnodeind},\jtnewnode{\jtnodeindb})\in \graphedgeset[+]'$, where $\jtnewnode{\jtnodeind} = \jtnodearbrest{\jtnodeind} \cup \{\gennodea[\jtind+1]\}$ and $\jtnewnode{\jtnodeindb} = \jtnodearbrest{\jtnodeindb} \cup \{\gennodea[\jtind+1]\}$ we can associate a link $(\jtnode_{\jtnodeind}, \jtnode_{\jtnodeindb})\in \graphedgeset$, where $\jtnode_{\jtnodeind}=\jtnodearbrest{\jtnodeind} \cup \jtnoderest{\jtnodeind}$ and $\jtnode_{\jtnodeindb}=\jtnodearbrest{\jtnodeindb} \cup \jtnoderest{\jtnodeindb}$ are emerging nodes in $\tree$ and $ \jtnoderest{\jtnodeind}$ and $ \jtnoderest{\jtnodeindb}$ may be empty sets.
    Thus we can form the subtree $\tree'=(\graphnodeset', \graphedgeset')$ which we regard as the subtree in Algorithm \ref{alg:cta} (Line~\ref{alg:cta:draw_subtree}), where $\graphnodeset'=\{\jtnode_{\jtnodeind}\in \graphnodeset: \jtnewnode{\jtnodeind} \in \graphnodeset[+]'\}$ and $\graphedgeset' = \{(\jtnode_\jtnodeind, \jtnode_\jtnodeindb)\in \graphedgeset:(\jtnewnode{\jtnodeind}, \jtnewnode{\jtnodeindb}) \in \graphedgeset[+]'\}$.
    Now, suppose that there exists another subtree $\tree''=(\graphnodeset'', \graphedgeset'')$, isomorphic to $\tree'$, where $\graphnodeset''=\{\jtnode_{\jtnodeind}'=\jtnodearbrest{\jtnodeind} \cup \jtnoderest{\jtnodeind}': \jtnewnode{\jtnodeind} \in \graphnodeset[+]''\}$, $\jtnode_{\jtnodeind}'\neq \jtnode_{\jtnodeind}$ for some $\jtnodeind$ and $\graphedgeset'' = \{(\jtnode_\jtnodeind', \jtnode_\jtnodeindb')\in \graphedgeset:(\jtnewnode{\jtnodeind}, \jtnewnode{\jtnodeindb}) \in \graphedgeset[+]'\}$.
    Enumerate the nodes such that $\jtnode_{1}'\ne\jtnode_{1}$ and let for simplicity $\jtnode_{2}'=\jtnode_{2}\in \neig{\jtnode_{1}'}$.
    Then, since the neighbors of $\jtnewnode{1}$ except for $\jtnode_{1}$ are neighbors of $\jtnode_{1}$ in $\tree$, the link $(\jtnode_{1},\jtnode_{1}')$ would be present in $\tree$. 
    Also the link $(\jtnode_{1},\jtnode_{2})$ is present in $\tree$ since $\tree'$ is a subtree of $\tree$. 
    Similarly, since $\tree''$ is a subtree of $\tree$, the link $(\jtnode_{1}',\jtnode_{2})$ would also be present in $\tree$. 
    Thus we would have a 3-cycle in $\tree$ which contradicts the assumption of $\tree$ being a tree. 
    Thus $\mathsf{\tree}(\tree, \tree[+])=\{\tree'\}$.

\end{proof}

\begin{proof}[Proof of Theorem \ref{thm:rcta}]
        We prove this theorem by taking a generative perspective in the sense that we rely on the sampling procedures of $\prop{\jtind}$ and $\bk{\jtind+1}$ given by Algorithm \ref{alg:cta} and \ref{alg:rcta} respectively.
    We also adopt the same notation as in these algorithms.
        To show \ref{itm:rcta:jt_preservation_prop} and \ref{itm:rcta:valid_cta_expansion} we distinguish between the cases $\{\gennodea[\jtind+1]\}\in \graphnodeset[+]$ and $\{\gennodea[\jtind+1]\}\notin \graphnodeset[+]$.
        For both cases, we let $\tree\in \supp(\bk{\jtind+1}(\tree[+],\cdot))$.
        We prove \ref{itm:rcta:jt_preservation_prop} by following the steps in Algorithm \ref{alg:rcta} with input $\tree[+]$. 
        For \ref{itm:rcta:valid_cta_expansion}, we show that $\tree[+]$ could be obtained by Algorithm \ref{alg:cta} with input $\tree$.

    If $\{\gennodea[\jtind+1]\}\in \graphnodeset[+]$, then no other node in $\tree[+]$ will contain the vertex $\gennodea[\jtind+1]$ which in turn implies that each neighbor in $\neig[{\tree[+]}]{\{\gennodea[\jtind+1]\}}$ will have $\emptyset$ as associated separator.
    Removing one node from a tree will always result in a forest possibly containing only one tree. 
    Thus the removal of $\{\gennodea[\jtind+1]\}$ from $\tree$ on Line~\ref{alg:rcta:remove_node} will result in a forest.
    Since $\{\gennodea[\jtind+1]\}$ is not contained in any of the trees in the forest, these will all trivially satisfy the junction tree property and the connection of $\tree$ into a tree by Line~\ref{alg:rcta:connect_forest} will give a random junction tree for $\trgr(\tree[+])[\{\gennodea[\ell]\}_{\ell=1}^\jtind]$, which proves \ref{itm:rcta:jt_preservation_prop} in this case. 
    For the \ref{itm:rcta:valid_cta_expansion} part, we simply observe that $\tree[+]$ can be constructed from $\tree$ by first drawing the empty subtree on Line~\ref{alg:cta:draw_subtree} and then obtaining $\tree[+]$ at the randomization on Line~\ref{alg:cta:randomize_at_emptyset} in Algorithm \ref{alg:cta}.

    Now, assume that $\{\gennodea[\jtind+1]\} \notin \graphnodeset[+]$.
    We proceed by showing \ref{itm:rcta:jt_preservation_prop}, i.e. that $\tree\in \trsp[\jtind]$.
    We first show that $\tree$ is a tree.
    Since for every $\jtnodeind \in \{1, \ldots, |\graphnodeset[+]'|\}$, all elements in $\neig[{\tree[+]}]{\jtnewnode{\jtnodeind}} \setminus \jtnode[\jtnodeind]$ are set to be neighbors of $\jtnode[\jtnodeind]$ in $\tree$, 
        for all $\jtnodeind, \jtnodeindb \in \{1,\dots,|\graphnodeset[+]'|\}$ it follows that    
    \begin{align*}
        (\jtnewnode{\jtnodeind}, \jtnewnode{\jtnodeindb}) \in \graphedgeset[+] \iff (\jtnode_{\jtnodeind},\jtnode_{\jtnodeindb}) \in \graphedgeset. 
    \end{align*}
    Hence, since $\tree[+]'$ is a subtree of $\tree[+]$, $\tree'=(\graphnodeset', \graphedgeset')$ is a tree, where $\graphnodeset'=
    \{\jtnode_{\jtnodeind}\}_{\jtnodeind=1}^{|\graphnodeset[+]'|}$
    and $\graphedgeset'=\{(\jtnode_{\jtnodeind},\jtnode_{\jtnodeindb}) \in \graphedgeset : (\jtnewnode{\jtnodeind}, \jtnewnode{\jtnodeindb}) \in \graphedgeset[+] \}$.
    Further, since elements of $\neig[{\tree[+]}]{\jtnewnode{\jtnodeind}} \setminus \jtnode[\jtnodeind]$ are not mutual neighbors, and parts of distinct subtrees of $\tree$, $\tree'$ is a subtree of $\tree$.
    As a consequence, we may assume that an arbitrary path (of length at least 2) in $\tree$ is of form
    \begin{align*}
        (\genseta_1,\dots,\genseta_{\pathordera}, \jtnode_{\subseqind_1},\dots, \jtnode_{\subseqind_\pathorderb}, \gensetb_1,\dots,\gensetb_{\pathorderc}),
    \end{align*}
     where $\pathordera\ge 0$, $\pathorderb\ge 0$, $\pathorderc \ge 0$ and $\{\jtnode_{\subseqind_\jtnodeind}\}_{\jtnodeind=1}^{\pathorderb} \subseteq \graphnodeset'$. 
    Let $\genseta$ and $\gensetb$ be the first and last element in this path, respectively. 
    Let the intersection $I_{\genseta\sim \gensetb}$ be defined by  \eqref{eq:abintersection}. 
    We must prove that $\genseta\cap \gensetb \subset I_{\genseta\sim \gensetb}$.
    We know that in $\tree[+]$, the node $\genseta_{\pathordera}$ was connected to either $\jtnode_{\subseqind_1}$ (in which case $(\jtnode_{\subseqind_1},\jtnewnode{\subseqind_1})\in \graphedgeset[+]$) or to $\jtnewnode{\subseqind_1}$ and $\gensetb_1$ was connected to either $\jtnode_{\subseqind_\pathorderb}$ (in which case $(\jtnode_{\subseqind_\pathorderb},\jtnewnode{\subseqind_\pathorderb})\in \graphedgeset[+]$) or to $\jtnewnode{\pathorderb}$.
    First, assume that $\genseta_{\pathordera}$ was connected to $\jtnewnode{\subseqind_1}$ and $\gensetb_1$ was connected to $\jtnewnode{\pathorderb}$, then the $\genseta$-$\gensetb$ path in $\tree[+]$ is of form
    \begin{align*}
        (\genseta_1,\dots,\genseta_{\pathordera},\jtnewnode{\subseqind_1},\dots,\jtnewnode{\subseqind_\pathorderb}, \gensetb_1,\dots,\gensetb_{\pathorderc}).
    \end{align*}
    Let
    \begin{align*}
        I_{\genseta\sim \gensetb}^+ =
        \left(
        \bigcap \limits_{\jtnodeind=1}^{\pathordera} \genseta_\jtnodeind \right) \cap
        \left(\bigcap \limits_{\jtnodeind=1}^\pathorderb \jtnewnode{\subseqind_\jtnodeind} \right) \cap \left( \bigcap \limits_{\jtnodeind=1}^{\pathorderc} \gensetb_\jtnodeind. \right)
    \end{align*}
    We know that, since $\tree[+]$ is a junction tree, $\genseta\cap \gensetb \subset I_{\genseta\sim \gensetb}^+$.
    Moreover, by Lemma~\ref{lem:sep_cond} it holds that $\bigcap \limits_{\jtnodeind=1}^\pathorderb \jtnewnode{\subseqind_\jtnodeind} = \bigcap \limits_{\jtnodeind=1}^\pathorderb \jtnode_{\subseqind_\jtnodeind}\cup \{\gennodea[\jtind+1]\}$.
    But, $\gennodea[\jtind+1] \notin \genseta$ and $\gennodea[\jtind+1] \notin \gensetb$, thus $I_{\genseta\sim \gensetb}=I_{\genseta\sim \gensetb}^+$ so that $\genseta\cap \gensetb \subset I_{\genseta\sim \gensetb}$.
    Now, note that $I^+_{\genseta\sim \gensetb}\cap (\jtnode_{\subseqind_1} \cap \jtnode_{\subseqind_\pathorderb})=I^+_{\genseta\sim \gensetb}$, so that adding $\jtnode_{\subseqind_1}$ and $ \jtnode_{\subseqind_\pathorderb}$ to the path does not change anything, thus the junction tree property also holds in the case where $\genseta_{\pathordera}$ was connected to $\jtnode_{\subseqind_1}$ or $\gensetb_1$ was connected to $\jtnode_{\subseqind_\pathorderb}$.

    To show \ref{itm:rcta:valid_cta_expansion} in this case, observe that $\tree$ can be expanded to $\tree[+]$ by first drawing the subtree $\tree'$ on Line~\ref{alg:cta:draw_subtree}.
    Then, by identifying $\jtnodearbrest{\jtnodeind}=\jtnewnode{\jtnodeind}\cap \jtnode_{\jtnodeind}$ and $\jtnodeseps{\jtnodeind}=\bigcup\limits_{\jtnode \in \neig[{\tree[+]'}]{\jtnewnode{\jtnodeind}}}\sep_{\jtnewnode{\jtnodeind},\jtnode}$, there is a positive probability for obtaining $\jtnodearb{\jtnodeind}=\jtnodearbrest{\jtnodeind} \setminus \jtnodeseps{\jtnodeind}$ for $\jtnodeind \in\{1,\dots,\jtsubtreeorder \}$ at Lines~\ref{alg:cta:draw_arb}--\ref{alg:cta:draw_arb_nonempty} (Algorithm~\ref{alg:cta}).
    The neighbors of $\jtnewnode{\jtnodeind}$ for the resulting tree can now be set to be identical to that in $\tree[+]$ by letting $\jtnodestomove{\jtnodeind}=\neig[{\tree[+]}]{\jtnewnode{\jtnodeind}} \setminus \jtnode[\jtnodeind] \setminus \graphnodeset[+]'$ on Line~\ref{alg:cta:draw_random_neighbors}.

    To show \ref{itm:rcta:jt_ind_subgraph_prop} we simply observe that the only vertex removed from $\tree[+]$ compared to $\tree$ is $\gennodea[\jtind+1]$ so that $\trgr(\tree[+])[\{\gennodea[\ell]\}_{\ell=1}^\jtind]=\trgr(\tree)$.

\end{proof}

\begin{proof}[Proof of Theorem \ref{thm:update_mu}]
To reduce some notation we define \(A\eqdef\newseps{\graph}\) and \(A_+\eqdef \newseps{\graph[+]}\).
    Consider the partitions of the separator sets $\sepset(\graph)={A} \cup {A^{\mathsf{c}}}$ and $\sepset(\graph[+])={A_{+}} \cup {A^{\mathsf{c}}_{+}}$.
    In order to show that the factorisation holds it is enough to establish that 
    \begin{enumerate}
        \setlength\itemsep{0.5em}
        \item 
            $\sepset(\graph[+]) = {A_{+}} \cup {A^{\mathsf{c}}}$, \label{itm:update_mu:proof:partition}
        \item 
            $\tree_\sep = {(\tree[+])}_{\sep}$ for $\sep \in {A^{\mathsf{c}}}$, \label{itm:update_mu:proof:ind_tree}
    \end{enumerate}
    where $\tree$ and $\tree[+]$ are arbitrary junction tree representations of the graphs $\graph$ and $\graph[+]$ respectively.
    Note that, showing (\ref{itm:update_mu:proof:partition}) is equivalent to showing that ${A^{\mathsf{c}}_{+}}={A^{\mathsf{c}}}$.

    First let $\sep \in {A^{\mathsf{c}}_{+}}$. 
    It suffice to show that $\sep \in \sepset(\graph)$ since then it follows that $\sep \in {A^{\mathsf{c}}}$. 
    But since $\sep$ is in $\sepset(\graph[+])$ and was not created by the expansion ($\sep \notin \sepset^\star$), 
    it has to come from $\graph$, i.e. $\sep \in \sepset(\graph)$.
    It follows that ${A^{\mathsf{c}}_{+}}\subseteq {A^{\mathsf{c}}}$.

    For the other inclusion, let $\sep\in {A^{\mathsf{c}}}$.
    It suffice to show that $\sep \in \sepset(\graph[+])$ since then it follows that $\sep \in {A^{\mathsf{c}}_{+}}$.
    But if $\sep \in \sepset(\graph)$ and $\sep$ is not subset of any separator in $\sepset(\graph[+])$, it cannot have been removed by the expansion meaning that $\sep \in \sepset(\graph[+])$. 
    Thus, ${A^{\mathsf{c}}} \subseteq {A^{\mathsf{c}}_{+}}$.
    It follows that ${A^{\mathsf{c}}}={A^{\mathsf{c}}_{+}}$.

    To show \ref{itm:update_mu:proof:ind_tree}, let $\sep \in {A^{\mathsf{c}}}$ and consider the tree $\tree_\sep$ spanned by the nodes in $\tree$ associated with separators which are subsets of $\sep$.
    Assume that the tree $({\tree[+]})_\sep$, spanned by the nodes in $\tree[+]$ associated with separators which are subsets of $\sep$, is different from $\tree_\sep$.
    This could only occur in two ways: (i) some new separator $\sep^\star$ that contains $\sep$ has been created or (ii) some separator containing $\sep$ has been removed. However, (i) cannot happen since then $\sep^\star$ would be a new separator in $\sepset^\star$ that would also contain $\sep$ which was not true by assumption. Thus, (ii) must hold. But the only way a separator $\sep'$ of $\tree_\sep$ can be removed is if a new separator $\sep' \cup \{\gennodea[\jtind+1]\}$ also is created. 
    But then $\sep' \cup \{\gennodea[\jtind+1]\}$ would be a new separator in $\sepset^\star$ containing $\sep$, leading to a contradiction.
    This implies that $\jtsepcount_{\graph}(\sep)=\jtsepcount_{\graph[+]}(\sep)$.

\end{proof}

\section{}
\label{appendix:b}
\begin{myalg}[\citet{doi:10.1198/jcgs.2009.07129}] \label{alg:jt_shuffling}

    Given any particular junction tree representation $\tree$, we can choose uniformly at random from the set of equivalent junction trees by applying the following algorithm to the forests $\jtsepforest{\sep}(\tree)$ defined by the distinct separators $\sep$ in $\tree$.
    Following the notation in Theorem~\ref{thm:num_forest}, $r_i$ refers to the size of subtree $i$.
    \begin{enumerate}
     \setlength\itemsep{1em}
        \item 
            Label each vertex of the forest $\{i,j\}$ where $1\le i \le q$ and $1\le j \le r_i$, so that the first index indicates the subtree the vertex belongs to and the second reflects some ordering within the subtree. The ordering of the subtrees and of vertices within subtrees are arbitrary.
        \item 
            Construct a list $v$ containing $q-2$ vertices each chosen uniformly at random with replacement from the set of all $p$ vertices.
        \item 
            Construct a set $w$ containing $q$ vertices, one chosen uniformly at random from each subtree.
        \item 
            Find in $w$ the vertex $x$ with the largest first index that does not appear as a first index of any vertex in $v$. Because the length of $v$ is 2 less than the size of $w$, there must always be at least two such vertices.
        \item 
            Connect $x$ to $y$, the vertex at the head of the list $v$.
        \item 
            Remove $x$ from the set $w$, and delete $y$ from the head of the list $v$.
        \item 
            Repeat from step 4 until $v$ is empty. At this point $w$ contains two vertices. Connect them.
    \end{enumerate}
\end{myalg}

\bibliographystyle{abbrvnat}
\bibliography{allbib}

\begin{thebibliography}{23}
\providecommand{\natexlab}[1]{#1}
\providecommand{\url}[1]{\texttt{#1}}
\expandafter\ifx\csname urlstyle\endcsname\relax
  \providecommand{\doi}[1]{doi: #1}\else
  \providecommand{\doi}{doi: \begingroup \urlstyle{rm}\Url}\fi

\bibitem[Andrieu et~al.(2010)Andrieu, Doucet, and
  Holenstein]{andrieu2010particle}
C.~Andrieu, A.~Doucet, and R.~Holenstein.
\newblock {Particle Markov chain Monte Carlo methods}.
\newblock \emph{Journal of the Royal Statistical Society: Series B (Statistical
  Methodology)}, 72\penalty0 (3):\penalty0 269--342, 2010.

\bibitem[Blair and Peyton(1993)]{ChordalGraphs}
J.~R. Blair and B.~Peyton.
\newblock An introduction to chordal graphs and clique trees.
\newblock In A.~George, J.~R. Gilbert, and J.~W. Liu, editors, \emph{Graph
  Theory and Sparse Matrix Computation}, volume~56 of \emph{The IMA Volumes in
  Mathematics and its Applications}, pages 1--29. Springer New York, 1993.

\bibitem[Capp\'e et~al.(2005)Capp\'e, Moulines, and
  Ryd\'en]{cappe:moulines:ryden:2005}
O.~Capp\'e, E.~Moulines, and T.~Ryd\'en.
\newblock \emph{Inference in hidden {M}arkov models}.
\newblock Springer New York, 2005.

\bibitem[Chopin and Papaspiliopoulos(2020)]{chopin:papaspiliopoulos:2020}
N.~Chopin and O.~Papaspiliopoulos.
\newblock \emph{An introduction to sequential {M}onte {C}arlo methods}.
\newblock Springer, 2020.

\bibitem[{Del Moral}(2004)]{delmoral:2004}
P.~{Del Moral}.
\newblock \emph{{F}eynman-Kac {F}ormulae. {G}enealogical and {I}nteracting
  {P}article {S}ystems with {A}pplications}.
\newblock 2004.

\bibitem[{Del Moral} and Guionnet(2001)]{delmoral:guionnet:2001}
P.~{Del Moral} and A.~Guionnet.
\newblock On the stability of interacting processes with applications to
  filtering and genetic algorithms.
\newblock \emph{Annales de l'Institut Henri Poincar\'e}, 37:\penalty0 155--194,
  2001.

\bibitem[{Del Moral} et~al.(2006){Del Moral}, Doucet, and
  Jasra]{10.2307/3879283}
P.~{Del Moral}, A.~Doucet, and A.~Jasra.
\newblock Sequential {M}onte {C}arlo samplers.
\newblock \emph{Journal of the Royal Statistical Society. Series B (Statistical
  Methodology)}, 68\penalty0 (3):\penalty0 411--436, 2006.

\bibitem[Elmasri(2017{\natexlab{a}})]{elmasri2017decomposable}
M.~Elmasri.
\newblock On decomposable random graphs.
\newblock \emph{ArXiv e-prints}, 2017{\natexlab{a}}.

\bibitem[Elmasri(2017{\natexlab{b}})]{elmasri2017sub}
M.~Elmasri.
\newblock Sub-clustering in decomposable graphs and size-varying junction
  trees.
\newblock \emph{ArXiv e-prints}, 2017{\natexlab{b}}.

\bibitem[Eppstein(2009)]{eppstein2009graph}
D.~Eppstein.
\newblock Graph-theoretic solutions to computational geometry problems.
\newblock In \emph{International Workshop on Graph-Theoretic Concepts in
  Computer Science}, pages 1--16. Springer, 2009.

\bibitem[Frydenberg and Lauritzen(1989)]{frydenberg1989decomposition}
M.~Frydenberg and S.~L. Lauritzen.
\newblock Decomposition of maximum likelihood in mixed graphical interaction
  models.
\newblock \emph{Biometrika}, 76\penalty0 (3):\penalty0 539--555, 1989.

\bibitem[Giudici and Green(1999)]{Giudici01121999}
P.~Giudici and P.~J. Green.
\newblock {Decomposable graphical Gaussian model determination}.
\newblock \emph{Biometrika}, 86\penalty0 (4):\penalty0 785--801, 1999.

\bibitem[Gordon et~al.(1993)Gordon, Salmond, and Smith]{gordon1993novel}
N.~J. Gordon, D.~J. Salmond, and A.~F. Smith.
\newblock Novel approach to nonlinear/non-{G}aussian {B}ayesian state
  estimation.
\newblock In \emph{IEE Proceedings F (Radar and Signal Processing)}, volume
  140, pages 107--113. IET, 1993.

\bibitem[Green and Thomas(2013)]{Green01032013}
P.~J. Green and A.~Thomas.
\newblock Sampling decomposable graphs using a {M}arkov chain on junction
  trees.
\newblock \emph{Biometrika}, 100\penalty0 (1):\penalty0 91--110, 2013.

\bibitem[Kawahara et~al.(2018)Kawahara, Saitoh, Suzuki, and
  Yoshinaka]{kawahara2018enumerating}
J.~Kawahara, T.~Saitoh, H.~Suzuki, and R.~Yoshinaka.
\newblock Enumerating all subgraphs without forbidden induced subgraphs via
  multivalued decision diagrams.
\newblock \emph{arXiv preprint arXiv:1804.03822}, 2018.

\bibitem[Lauritzen(1996)]{lauritzen1996}
S.~L. Lauritzen.
\newblock \emph{Graphical Models}.
\newblock Oxford University Press, 1996.

\bibitem[Markenzon et~al.(2008)Markenzon, Vernet, and Araujo]{randchord}
L.~Markenzon, O.~Vernet, and L.~Araujo.
\newblock Two methods for the generation of chordal graphs.
\newblock \emph{Annals of Operations Research}, 157\penalty0 (1):\penalty0
  47--60, 2008.

\bibitem[Moon(1967)]{moon1967enumerating}
J.~Moon.
\newblock Enumerating labelled trees.
\newblock \emph{Graph Theory and Theoretical Physics}, 261271, 1967.

\bibitem[Olsson et~al.(2019)Olsson, Pavlenko, and Rios]{olsson2019}
J.~Olsson, T.~Pavlenko, and F.~L. Rios.
\newblock {Bayesian learning of weakly structural Markov graph laws using
  sequential Monte Carlo methods}.
\newblock \emph{Electron. J. Statist.}, 13\penalty0 (2):\penalty0 2865--2897,
  2019.

\bibitem[Pearl(1997)]{pearl1997probabilistic}
J.~Pearl.
\newblock \emph{Probabilistic Reasoning in Intelligent Systems: Networks of
  Plausible Inference}.
\newblock Representation and Reasoning Series. Morgan Kaufmann, 1997.

\bibitem[Thomas and Green(2009{\natexlab{a}})]{Thomas:2009aa}
A.~Thomas and P.~J. Green.
\newblock Enumerating the decomposable neighbours of a decomposable graph under
  a simple perturbation scheme.
\newblock \emph{Computational statistics \& data analysis}, 53\penalty0
  (4):\penalty0 1232--1238, 02 2009{\natexlab{a}}.

\bibitem[Thomas and Green(2009{\natexlab{b}})]{doi:10.1198/jcgs.2009.07129}
A.~Thomas and P.~J. Green.
\newblock Enumerating the junction trees of a decomposable graph.
\newblock \emph{Journal of Computational and Graphical Statistics}, 18\penalty0
  (4):\penalty0 930--940, 2009{\natexlab{b}}.

\bibitem[Wormald(1985)]{countchordal}
N.~C. Wormald.
\newblock Counting labelled chordal graphs.
\newblock \emph{Graphs and Combinatorics}, 1\penalty0 (1):\penalty0 193--200,
  1985.

\end{thebibliography}
\end{document}